\newtheorem{definition}{Definition}[section]
\newtheorem{theorem}[definition]{Theorem}
\newtheorem{lemma}[definition]{Lemma}
\newtheorem{corollary}[definition]{Corollary}
\newtheorem{example}[definition]{Example}
\newtheorem{proposition}[definition]{Proposition}
\def\makeCal#1{%
\expandafter\newcommand\csname c#1\endcsname{\mathcal{#1}}}
\def\makeBf#1{%
\expandafter\newcommand\csname b#1\endcsname{\mathbf{#1}}}
\def\makeBb#1{%
\expandafter\newcommand\csname m#1\endcsname{\mathbb{#1}}}
\def\makeFrak#1{%
\expandafter\newcommand\csname f#1\endcsname{\mathfrak{#1}}}
\def\makeScr#1{%
\expandafter\newcommand\csname s#1\endcsname{\mathscr{#1}}}
\edef\y{\@Alph\count@}%
\DeclareMathOperator{\m1}{\mathbbm{1}}
\newcommand{\tG}{\tilde{G}}
\title{Doubly alternating words in the positive part of $U_q(\widehat{\mathfrak{sl}}_2)$}
\author{Chenwei Ruan}
\date{}
\begin{document}
\maketitle

\begin{abstract}
This paper is about the positive part $U_q^+$ of the $q$-deformed enveloping algebra $U_q(\widehat{\mathfrak{sl}}_2)$. The algebra $U_q^+$ admits an embedding, due to Rosso, into a $q$-shuffle algebra $\mV$. The underlying vector space of $\mV$ is the free algebra on two generators $x,y$. Therefore, the algebra $\mV$ has a basis consisting of the words in $x,y$. Let $U$ denote the image of $U_q^+$ under the Rosso embedding. In our first main result, we find all the words in $x,y$ that are contained in $U$. One type of solution is called alternating. The alternating words have been studied by Terwilliger. There is another type of solution, which we call doubly alternating. In our second main result, we display many commutator relations involving the doubly alternating words. In our third main result, we describe how the doubly alternating words are related to the alternating words. 

\bigskip
\noindent {\bf Keywords}. quantum algebra; $q$-shuffle algebra; commutator relation; generating function. \\
{\bf 2020 Mathematics Subject Classification}. Primary: 17B37. Secondary: 05E16, 16T20, 81R50. 
\end{abstract}

\section{Introduction}
The $q$-deformed enveloping algebra $U_q(\widehat{\mathfrak{sl}}_2)$ has a subalgebra $U_q^+$, called the positive part \cite{CP,lusztig}. The algebras $U_q(\widehat{\mathfrak{sl}}_2)$ and $U_q^+$ appear in the topics of combinatorics \cite{ariki,IT2,JKKKY,watanabe}, mathematical physics \cite{baseilhac1,BS,DFJMN,jimbo}, and representation theory \cite{BCP,bittmann,FR,jing}. Both $U_q(\widehat{\mathfrak{sl}}_2)$ and $U_q^+$ are associative, noncommutative, and infinite-dimensional. 

\medskip
\noindent The algebra $U_q^+$ is defined by two generators $A,B$ and two $q$-Serre relations; see \eqref{eq:qSerre1}, \eqref{eq:qSerre2} below. 

\medskip
\noindent In \cite{rosso1,rosso2}, Rosso obtained an embedding of $U_q^+$ into a $q$-shuffle algebra $\mV$. The Rosso embedding has been used to study $U_q^+$; see for example \cite{leclerc1,inverse,Delta_n,ter_alternating,ter_catalan,ter_beck}. The underlying vector space of $\mV$ is the free algebra on two generators $x,y$. Consequently, the algebra $\mV$ has a basis consisting of the words in $x,y$. By construction, the Rosso embedding sends $A \mapsto x$ and $B \mapsto y$. Let $U$ denote the subalgebra of the $q$-shuffle algebra $\mV$ generated by $x,y$. Note that $U$ is the image of $U_q^+$ under the Rosso embedding. 

\medskip
\noindent In this paper, our first main result is to classify all the words in $x,y$ that are contained in $U$. As we will see, these words fall into one of the three following types: 
\begin{enumerate}
\item $\cdots xxx \cdots$ or $\cdots yyy \cdots$
\item $\cdots xyxyxy \cdots$
\item $\cdots xxyyxxyyxxyy \cdots$
\end{enumerate}

\noindent A word of type (i) is generated by $x$ or $y$ with respect to the $q$-shuffle product. 

\medskip
\noindent A word of type (ii) is called alternating. The alternating words were closely examined in \cite{ter_alternating}. In \cite[Theorems 10.1 and 10.2]{ter_alternating}, the alternating words are used to construct PBW bases for $U_q^+$. 

\medskip
\noindent A word of type (iii) is said to be doubly alternating. In this paper, we will study the doubly alternating words. As part of this study, we will express the doubly alternating words in terms of the alternating words. 

\medskip
\noindent This paper is organized as follows. In Section 2, we recall the algebra $U_q^+$ and its Rosso embedding. In Section 3, we classify the words contained in $U$. In Section 4, we give a commutator relation for each choice of one letter and one doubly alternating word. In Section 5, we express each doubly alternating word as a polynomial in the alternating words. In Section 6, we express the results in Section 5 in terms of generating functions. 

\section{The algebra $U_q^+$ and its Rosso embedding}
Before starting our formal argument, we first establish some conventions and notations. Recall the natural numbers $\mN=\{0,1,2,\ldots\}$ and the integers $\mZ=\{0,\pm 1,\pm 2,\ldots\}$. Let $\mF$ denote a field of characteristic zero. All algebras discussed in this paper are associative, over $\mF$, and have a multiplicative identity. Let $q$ denote a nonzero scalar in $\mF$ that is not a root of unity. For elements $X,Y$ in any algebra, their commutator and $q$-commutator are given by 
\begin{align*}
[X,Y]=XY-YX, \hspace{4em} [X,Y]_q=qXY-q^{-1}YX. 
\end{align*}

\noindent For $n \in \mZ$, define 
\begin{equation}\label{eq:qint}
[n]_q=\frac{q^n-q^{-n}}{q-q^{-1}}. 
\end{equation}

\noindent Let $U_q^+$ denote the algebra with generators $A,B$ and relations 
\begin{equation}\label{eq:qSerre1}
A^3B-[3]_qA^2BA+[3]_qABA^2-BA^3=0, 
\end{equation}
\begin{equation}\label{eq:qSerre2}
B^3A-[3]_qB^2AB+[3]_qBAB^2-AB^3=0. 
\end{equation}

\noindent The relations \eqref{eq:qSerre1}, \eqref{eq:qSerre2} are the $q$-Serre relations. 

\medskip
\noindent We now recall the $q$-shuffle algebra $\mV$. Let $x,y$ denote noncommuting indeterminates. We call $x$ and $y$ \textit{letters}. The underlying vector space of $\mV$ is the free algebra generated by the letters $x,y$. For $n \in \mN$, the product of $n$ letters is called a \textit{word} of \textit{length} $n$. The word of length $0$ is called \textit{trivial} and denoted by $\m1$. The words form a basis for the vector space $\mV$; this basis is called \textit{standard}. 

\medskip
\noindent The vector space $\mV$ can be equipped with another algebra structure, called the $q$-shuffle algebra \cite{rosso1,rosso2}. The $q$-shuffle product is denoted by $\star$. The following recursive definition of $\star$ is adopted from \cite{green}. 

\begin{itemize}
\item For $v \in \mV$, 
	\begin{equation*}\label{star1}
	\m1 \star v=v \star \m1=v. 
	\end{equation*}
\item For the letters $u,v$, 
	\begin{equation*}\label{star2}
	u \star v=uv+vuq^{\langle u,v \rangle}, 
	\end{equation*}	
	where 
	 \begin{equation*}
	\langle x,x \rangle=\langle y,y \rangle =2, \hspace{4em}\langle x,y \rangle=\langle y,x \rangle=-2.
	\end{equation*}
\item For a letter $u$ and a nontrivial word $v=v_1v_2 \cdots v_n$ in $\mV$, 
	\begin{equation*}\label{star3.1}
	u \star v=\sum_{i=0}^n v_1 \cdots v_iuv_{i+1} \cdots v_n q^{\langle u,v_1 \rangle+\cdots+\langle u,v_i \rangle}, 
	\end{equation*}
	\begin{equation*}\label{star3.2}
	v \star u=\sum_{i=0}^n v_1 \cdots v_iuv_{i+1} \cdots v_n q^{\langle u,v_n \rangle+\cdots+\langle u,v_{i+1} \rangle}. 
	\end{equation*}
\item For nontrivial words $u=u_1u_2 \cdots u_r$ and $v=v_1v_2 \cdots v_s$ in $\mV$, 
	\begin{equation*}\label{star4.1}
	u \star v=u_1((u_2 \cdots u_r) \star v)+v_1(u \star (v_2 \cdots v_s))q^{\langle v_1,u_1 \rangle+\cdots+\langle v_1,u_r \rangle}, 
	\end{equation*}
	\begin{equation*}\label{star4.2}
	u \star v=(u \star (v_1 \cdots v_{s-1}))v_s+((u_1 \cdots u_{r-1}) \star v)u_rq^{\langle u_r,v_1 \rangle+\cdots+\langle u_r,v_s \rangle}. 
	\end{equation*}
\end{itemize}

\noindent It was shown in \cite{rosso1,rosso2} that the vector space $\mV$, equipped with the $q$-shuffle product $\star$, forms an algebra. This is the $q$-shuffle algebra. 

\medskip
\noindent Next we recall the Rosso embedding of $U_q^+$ into the $q$-shuffle algebra $\mV$. In \cite{rosso1,rosso2}, Rosso showed that $x,y$ satisfy 
\begin{equation*}
x \star x \star x \star y-[3]_qx \star x \star y \star x+[3]_qx \star y \star x \star x-y \star x \star x \star x=0, 
\end{equation*}
\begin{equation*}
y \star y \star y \star x-[3]_qy \star y \star x \star y+[3]_qy \star x \star y \star y-x \star y \star y \star y=0. 
\end{equation*}

\noindent As a result, there exists an algebra homomorphism $\natural$ from $U_q^+$ to the $q$-shuffle algebra $\mV$ that sends $A \mapsto x$ and $B \mapsto y$. By \cite[Theorem 15]{rosso2} the map $\natural$ is injective. Let $U$ denote the subalgebra of $\mV$ generated by $x,y$ with respect to the $q$-shuffle product. Then the map $\natural:U_q^+ \to U$ is an algebra isomorphism. 

\medskip
\noindent Recall that the words form the standard basis for $\mV$. This naturally raises a question: which words are contained in $U$? We will answer this question in Section 3. 

\section{Classification of the words in $U$}
In this section, we classify the words that are contained in $U$. We start by recalling a few definitions and results. 

\begin{definition}\label{def:biform}\rm
(See \cite[Definition 4.1]{PT}.) Let $(~,~):\mV \times \mV \to \mF$ denote the bilinear form with respect to which the standard basis for $\mV$ is orthonormal. 
\end{definition}

\begin{definition}\rm\label{def:idealJ}
(See \cite[Definition 3.5]{PT}.) Let $J$ denote the two-sided ideal of the free algebra $\mV$ generated by 
\begin{equation*}
xxxy-[3]_qxxyx+[3]_qxyxx-yxxx, \hspace{3em}yyyx-[3]_qyyxy+[3]_qyxyy-xyyy. 
\end{equation*}
\end{definition}

\begin{proposition}\rm\label{prop:orthJ}
(See \cite[Lemma 6.5]{PT}.) The algebra $U$ is the orthogonal complement of $J$ in $\mV$ with respect to the bilinear form $(~,~)$. 
\end{proposition}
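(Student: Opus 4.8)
\medskip
\noindent{\bf Proof proposal.}
The plan is to produce a single linear map on $\mV$ that is self-adjoint for $(~,~)$, has kernel $J$, and has image $U$; the proposition then follows in one line. Define $\phi\colon\mV\to\mV$ on the standard basis by sending a word $w_1w_2\cdots w_n$ (each $w_i$ a letter) to the $q$-shuffle product $w_1\star w_2\star\cdots\star w_n$, and fixing the empty word; extend linearly. Since $\star$ is associative, $\phi(ww')=\phi(w)\star\phi(w')$ for all words $w,w'$, so $\phi$ is an algebra homomorphism from $\mV$ (with its concatenation product, i.e.\ the free algebra on $x,y$) to the $q$-shuffle algebra $(\mV,\star)$ that fixes $x$ and $y$. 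Since $\mV$ is freely generated by $x,y$ under concatenation, $\phi$ is the unique such homomorphism, so it equals the composite of the canonical projection $\mV\to U_q^+$ (with $x\mapsto A$, $y\mapsto B$), whose kernel is $J$ by the definition of $U_q^+$, followed by the Rosso embedding $\natural$, which is injective. Hence $\ker\phi=J$ and $\mathrm{im}\,\phi=U$.

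\medskip
\noindent The crux is the claim that $\phi$ is self-adjoint: $(\phi(u),v)=(u,\phi(v))$ for all $u,v\in\mV$. As $\phi$ preserves the grading by word length and the standard basis is orthonormal, it suffices to show, for words $u=u_1\cdots u_n$ and $v=v_1\cdots v_n$ of equal length, that the coefficient of $v$ in $u_1\star\cdots\star u_n$ equals the coefficient of $u$ in $v_1\star\cdots\star v_n$. For this I would first establish the standard closed form for an iterated shuffle of letters,
\[
z_1\star z_2\star\cdots\star z_n=\sum_{\sigma\in S_n}\Bigl(\prod_{\substack{i<j\\ \sigma(i)>\sigma(j)}}q^{\langle z_{\sigma(i)},z_{\sigma(j)}\rangle}\Bigr)\,z_{\sigma(1)}z_{\sigma(2)}\cdots z_{\sigma(n)},
\]
proved by induction on $n$ from the recursive definition of $\star$. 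Extracting coefficients, the two quantities in question become sums over the permutations $\sigma$ with $u_{\sigma(k)}=v_k$ for all $k$, respectively over the permutations $\tau$ with $v_{\tau(k)}=u_k$ for all $k$, and $\tau=\sigma^{-1}$ is a bijection between these two sets. Under $\sigma\mapsto\sigma^{-1}$ the inverted pairs of $\sigma$ correspond to those of $\sigma^{-1}$ via $(i,j)\mapsto(\sigma(j),\sigma(i))$; since $u_{\sigma(k)}=v_k$ and $\langle x,y\rangle=\langle y,x\rangle$, the associated products of powers of $q$ agree factor by factor. Hence the two coefficients coincide, and the claim follows.

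\medskip
\noindent Granting self-adjointness, the proposition is immediate. For $v\in\mV$: one has $v\in U^\perp$ iff $(v,\phi(u))=0$ for all $u\in\mV$ (because $U=\mathrm{im}\,\phi$), iff $(\phi(v),u)=0$ for all $u\in\mV$ (self-adjointness), iff $\phi(v)=0$ (the standard basis is orthonormal), iff $v\in\ker\phi=J$. Thus $U^\perp=J$; since $U$ is graded and $(~,~)$ is nondegenerate in each degree, taking orthogonal complements once more gives $U=(U^\perp)^\perp=J^\perp$, which is the assertion. I expect essentially all of the work to lie in the self-adjointness of $\phi$: proving the permutation-sum formula and checking that $\sigma\mapsto\sigma^{-1}$ preserves the $q$-weights are routine but require care, whereas the remaining steps are formal. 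A more Hopf-theoretic proof is also possible—using that $U_q^+$ is a bialgebra whose defining relations generate a coideal, so that $J^{\perp}$ is a $\star$-subalgebra of $\mV$ containing $x,y$, hence $J^{\perp}\supseteq U$, and then matching graded dimensions—but the argument through $\phi$ keeps everything self-contained.
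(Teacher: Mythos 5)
Your proof is correct, but it takes a genuinely different route from the paper, which offers no argument of its own for this proposition and simply imports it as \cite[Lemma 6.5]{PT}. Your route is the standard self-duality argument for quantum shuffle algebras, and it is sound: the evaluation map $\phi\colon w_1\cdots w_n\mapsto w_1\star\cdots\star w_n$ is an algebra map from the free algebra to $(\mV,\star)$ fixing $x,y$, hence coincides with the quotient by $J$ followed by $\natural$, so $\ker\phi=J$ and $\operatorname{im}\phi=U$; the permutation-sum formula you state is the correct closed form for an iterated shuffle of letters (it agrees with the recursive definition for small $n$ and the induction is routine); and the bijection $\sigma\mapsto\sigma^{-1}$ matches inversions weight-for-weight precisely because $\langle\,,\,\rangle$ is symmetric, which gives self-adjointness. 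The passage from $U^{\perp}=J$ to $U=J^{\perp}$ is legitimate since $\phi$, $U$ and $J$ are all graded by word length and $(~,~)$ is nondegenerate on each finite-dimensional graded piece of $\mV$. One point worth making explicit: the identification $\ker\phi=J$ uses both halves of what Section 2 quotes from Rosso, namely that $x,y$ satisfy the $q$-Serre relations in $(\mV,\star)$ (so that $\phi$ kills $J$ and the factorization through $U_q^+$ exists) and that $\natural$ is injective (so that $\ker\phi$ is no larger than $J$); the injectivity is the deep input, and your proof, like the cited \cite[Lemma 6.5]{PT}, ultimately rests on it. What the paper's citation buys is brevity; what your argument buys is a self-contained proof whose only external input is Rosso's theorem, which the paper has already recorded.
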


\begin{lemma}\rm\label{lem:wordsinU}
Let $w$ denote a word in $\mV$. Then $w \in U$ if and only if $w$ does not contain any of the following segments 
\begin{equation*}
xxxy, \hspace{1em} xxyx, \hspace{1em} xyxx, \hspace{1em} yxxx, \hspace{1em} yyyx, \hspace{1em} yyxy, \hspace{1em} yxyy, \hspace{1em} xyyy. 
\end{equation*}
\end{lemma}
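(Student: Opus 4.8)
The plan is to use Proposition \ref{prop:orthJ}, which identifies $U$ as the orthogonal complement $J^\perp$ of the two-sided ideal $J$ with respect to the form $(~,~)$ under which words are orthonormal. So for a word $w$, the condition $w \in U$ becomes: $(w, j) = 0$ for every $j \in J$. Since $J$ is spanned (as a vector space) by elements of the form $u\,z\,v$, where $z$ is one of the two generating elements of $J$ from Definition \ref{def:idealJ} and $u, v$ range over words, and since the form is orthonormal on words, $(w, u\,z\,v)$ is a signed count of how $w$ decomposes as $u\,(\text{word of length }4)\,v$ with the length-$4$ middle matching a monomial of $z$. Concretely, $(w, uzv) = 0$ unless $w$ has length $|u| + 4 + |v|$ and $w = u\,s\,v$ for some length-$4$ word $s$ appearing in $z$, in which case it equals the coefficient of $s$ in $z$. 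The four monomials in the first generator $xxxy - [3]_q xxyx + [3]_q xyxx - yxxx$ are exactly the four forbidden $x$-heavy segments $xxxy, xxyx, xyxx, yxxx$, and similarly the second generator gives the four $y$-heavy forbidden segments.

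The forward direction ($w \in U \Rightarrow w$ avoids all eight segments) is the contrapositive: if $w$ contains, say, $xxxy$ at position $i$ (so $w = u\,(xxxy)\,v$ with $|u| = i$), I would exhibit an element of $J$ not orthogonal to $w$. The natural candidate is $j = u \cdot (xxxy - [3]_q xxyx + [3]_q xyxx - yxxx) \cdot v \in J$. But one must be careful: other decompositions of $w$ could also contribute to $(w, j)$, since the segments $xxyx$, $xyxx$, $yxxx$ might also occur inside $w\,$ at positions overlapping the chosen one or elsewhere, contaminating the inner product. The clean fix is to choose the occurrence wisely. If $w$ contains \emph{any} of the eight forbidden segments, I claim it contains one of a restricted sublist in a \emph{canonical} location — for instance, scan $w$ left to right and take the \emph{first} position where the letter pattern is forced to violate the ``no three-in-a-row-then-switch / no switch-then-three-in-a-row'' condition. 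More efficiently: the eight segments are exactly the length-$4$ words that have three of one letter and one of the other; a word avoids all eight iff whenever it has a maximal run of $\geq 3$ equal letters, that run is not immediately preceded or followed by the other letter, i.e. every maximal run of length $\geq 3$ is the whole word. So $w$ fails the condition iff it has a maximal run of length exactly $\geq 3$ that is a proper sub-run, which pins down a specific segment (the boundary of that run together with three of its letters) occurring at a specific position; pairing $w$ against $u z v$ for that position and that generator $z$, one checks the inner product is a nonzero scalar, possibly after arguing no other decomposition cancels it — and here the key observation is that a run of \emph{exactly} three cannot be extended, so the length-$4$ windows overlapping it that appear in $z$ are limited. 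I would present this as: choose $i$ minimal such that $w_{i+1} w_{i+2} w_{i+3} w_{i+4}$ is forbidden, then argue that for $j = (w_1 \cdots w_i)\, z\, (w_{i+5} \cdots w_n)$ with the matching $z$, one has $(w,j) \ne 0$.

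The reverse direction ($w$ avoids all eight segments $\Rightarrow w \in U$) requires showing $(w, j) = 0$ for \emph{all} $j \in J$. It suffices to check this on a spanning set of $J$, namely all $u z v$ with $u,v$ words and $z$ one of the two generators. Fix such a $u z v$ of length $|u| + 4 + |v| = n = |w|$. Then $(w, uzv) = \sum_s c_s\, (w, u s v)$ where $s$ runs over the four length-$4$ monomials of $z$ with coefficients $c_s \in \{1, -[3]_q, [3]_q, -1\}$, and $(w, usv)$ is $1$ if $w = usv$ and $0$ otherwise. So $(w, uzv)$ is nonzero only if $w = usv$ for some $s$ among the four monomials of $z$ — but each such $s$ is one of the forbidden segments, contradicting the hypothesis on $w$. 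Hence $(w, uzv) = 0$ for every generator-based element, so $w \perp J$, so $w \in U$. This direction is essentially immediate once the setup is in place.

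The main obstacle is entirely in the forward direction: correctly handling the possibility that the element $u z v$ we build from one occurrence of a forbidden segment secretly has zero inner product with $w$ because $w$ decomposes in several ways with cancelling signs. The remedy is the run-length analysis above — choosing the occurrence at the boundary of a maximal run of length exactly three ensures the obstructing decompositions are controlled — or, alternatively, an induction on the length of $w$ stripping letters from the ends. I expect a short lemma of the form ``a word avoids all eight listed segments iff every maximal run of a single letter either has length $\leq 2$ or equals the entire word'' will organize the bookkeeping cleanly, and from there both implications are routine pairings against the generators of $J$.
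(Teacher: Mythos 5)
Your overall strategy is exactly the one the paper intends (its proof of this lemma is the single line ``Follows from Proposition \ref{prop:orthJ}''): test $w$ against the spanning set $\{uzv\}$ of $J$, where $u,v$ run over words and $z$ over the two generators from Definition \ref{def:idealJ}. Your reverse direction is correct and complete. However, the ``main obstacle'' you identify in the forward direction does not exist, and the machinery you build to address it (the run-length lemma, the choice of a canonical or minimal occurrence) is unnecessary. For \emph{fixed} words $u$ and $v$, the element $uzv$ is a linear combination of the four \emph{distinct} words $usv$, where $s$ runs over the four monomials of $z$; since the standard basis is orthonormal, $(w,uzv)$ is nonzero if and only if $w$ equals $usv$ for exactly one such $s$, in which case $(w,uzv)$ is the coefficient of $s$ in $z$. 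Occurrences of forbidden segments elsewhere in $w$, overlapping or not, cannot contribute to $(w,uzv)$: they would correspond to decompositions $w=u's'v'$ with $|u'|\neq |u|$, i.e.\ to a \emph{different} element $u'zv'$ of the spanning set, not to another term of $uzv$. So the forward direction is immediate: if $w$ contains a forbidden segment $s$ starting at position $i+1$, set $u=w_1\cdots w_i$ and $v=w_{i+5}\cdots w_n$, let $z$ be the generator having $s$ as a monomial, and conclude $(w,uzv)=\pm 1$ or $\pm[3]_q$, which is nonzero since $[3]_q=q^2+1+q^{-2}$ and $q$ is not a root of unity (a point worth stating explicitly). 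With that simplification your argument is complete and coincides with the paper's.
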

\begin{proof}
Follows from Proposition \ref{prop:orthJ}. 
\end{proof}

\noindent Using Lemma \ref{lem:wordsinU}, we obtain the following examples. 

\begin{example}\rm\label{ex:single}
For $n \in \mN$, the following words are contained in $U$. 
\begin{equation}\label{eq:single}
x^n, \hspace{3em}y^n. 
\end{equation}
\end{example}

\begin{example}\rm\label{ex:alternating}
For $n \in \mN$, the following words are contained in $U$. 
\begin{equation}\label{eq:alternating}
(xy)^{n+1}, \hspace{3em}(yx)^{n+1}, \hspace{3em}(xy)^nx, \hspace{3em}(yx)^ny. 
\end{equation}
\end{example}

\begin{example}\rm\label{ex:dalternating}
For $n \in \mN$, the following words are contained in $U$. 
\begin{align}
\label{eq:dalternating1}
&(xxyy)^{n+1}, &&(yyxx)^{n+1}, &&(xxyy)^nxx, &&(yyxx)^nyy; \\[0.2em]
\label{eq:dalternating2}
&xyy(xxyy)^n, &&yxx(yyxx)^n, &&x(yyxx)^n, &&y(xxyy)^n; \\[0.2em]
\label{eq:dalternating3}
&(xxyy)^nxxy, &&(yyxx)^nyyx, &&(xxyy)^nx, &&(yyxx)^ny; \\[0.2em]
\label{eq:dalternating4}
&x(yyxx)^ny, &&y(xxyy)^nx, &&xyy(xxyy)^nx, &&yxx(yyxx)^ny. 
\end{align}
\end{example}

\noindent We remark that the words appearing in Example \ref{ex:alternating} are called \textit{alternating}. The alternating words were defined and closely examined in \cite{ter_alternating}. For a list of known relations on the alternating words, see Appendix A. 

\medskip
\noindent The following notation for the alternating words will be used later. 

\begin{definition}\rm\label{def:tGn}
(See \cite[Definition 5.2]{ter_alternating}.) For $n \in \mN$, define
\begin{equation*}
\tG_{n+1}=(xy)^{n+1}, \hspace{2em}G_{n+1}=(yx)^{n+1}, \hspace{2em}W_{-n}=(xy)^nx, \hspace{2em}W_{n+1}=(yx)^ny. 
\end{equation*}
The above exponents are with respect to the free product. 
\end{definition}

\begin{example}\rm\label{ex:tGn}
We list $\tG_{n+1}$, $G_{n+1}$, $W_{-n}$, $W_{n+1}$ for $0 \leq n \leq 3$. 
\begin{align*}
&\tG_1=xy, && \tG_2=xyxy, && \tG_3=xyxyxy, && \tG_4=xyxyxyxy; \\
&G_1=yx, && G_2=yxyx, && G_3=yxyxyx, && G_4=yxyxyxyx; \\
&W_0=x, && W_{-1}=xyx, && W_{-2}=xyxyx, && W_{-3}=xyxyxyx; \\
&W_1=y, && W_2=yxy, && W_3=yxyxy, && W_4=yxyxyxy. 
\end{align*}
\end{example}

\noindent Motivated by the alternating words and Example \ref{ex:dalternating}, we make the following definition. 

\begin{definition}\rm\label{def:dalternating}
A word of the form 
\begin{equation*}
\cdots xxyyxxyyxxyy \cdots
\end{equation*}
is said to be \textit{doubly alternating}. 

\medskip
\noindent There are $16$ families of doubly alternating words, depending on the choice of the first two letters and last two letters. These families are listed in \eqref{eq:dalternating1}--\eqref{eq:dalternating4}. 
\end{definition}

\begin{theorem}\rm\label{thm:wordsinU}
A word is contained in $U$ if and only if it appears in Examples \ref{ex:single}, \ref{ex:alternating}, \ref{ex:dalternating}. 
\end{theorem}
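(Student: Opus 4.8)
The plan is to reduce the statement entirely to Lemma~\ref{lem:wordsinU}, which says that a word $w$ lies in $U$ if and only if $w$ avoids the eight forbidden segments $xxxy, xxyx, xyxx, yxxx, yyyx, yyxy, yxyy, xyyy$. So the claim becomes a purely combinatorial assertion about binary words over $\{x,y\}$: a word avoids all eight segments if and only if it is one of the words listed in Examples~\ref{ex:single}, \ref{ex:alternating}, \ref{ex:dalternating}. One direction is routine: I would verify (by direct inspection of each listed family) that none of the words in \eqref{eq:single}, \eqref{eq:alternating}, \eqref{eq:dalternating1}--\eqref{eq:dalternating4} contains a forbidden segment; this is a finite check once one notes that each family is built by repeating a short block ($x$, $y$, $xy$, $yx$, $xxyy$, or $yyxx$) and only a bounded window near the two ends needs separate attention. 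Indeed Examples~\ref{ex:single}--\ref{ex:dalternating} already assert containment in $U$ via Lemma~\ref{lem:wordsinU}, so strictly speaking only the converse needs real work.

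For the converse, let $w$ be a word avoiding all eight forbidden segments; I want to show $w$ appears in the list. The key observation is a local constraint on runs: the segments $xxxy, xxyx, yxxx, xyxx$ together forbid any maximal run of $x$'s of length $\ge 3$ unless that run occupies the entire word (no $y$ on either side), and similarly for $y$'s; moreover $xxyx$ and $xyxx$ forbid a run of exactly two $x$'s from being adjacent to a run of exactly one $x$ through a single intervening... more precisely, I would argue as follows. Write $w$ as an alternating concatenation of maximal runs $a_1^{e_1} a_2^{e_2}\cdots a_k^{e_k}$ where $a_i \in \{x,y\}$, $a_i \ne a_{i+1}$, and $e_i \ge 1$. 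If $k=1$, then $w = x^n$ or $y^n$ and we are in Example~\ref{ex:single}. If $k \ge 2$: first, any interior run (i.e.\ $2 \le i \le k-1$) must have $e_i \le 2$, since an interior run of length $\ge 3$ has letters on both sides and hence contains one of $xxxy$/$yxxx$/$yyyx$/$xyyy$ as the transition segment... actually a run of length $3$ flanked on the right gives $\cdots aaab\cdots$ which is $xxxy$ or $yyyx$, forbidden. And a boundary run ($i=1$ or $i=k$) of length $\ge 3$: if $k\ge 2$ it is flanked on one side, so $e_1 \le 2$ and $e_k \le 2$ as well. Hence \emph{every} run has length $1$ or $2$.

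Next I refine using $xxyx$, $xyxx$, $yyxy$, $yxyy$. These forbid the patterns: a run of length $2$ immediately followed (across one run of the other letter of length $1$) by... concretely $xxyx$ forbids run-pattern $(e_i, e_{i+1}, e_{i+2}) = (\ge 2, 1, \ge 1)$ starting with $x$; i.e.\ if $a_i = x$ and $e_i = 2$ then we cannot have $e_{i+1} = 1$. Symmetrically $xyxx$ forbids $(\ge 1, 1, \ge 2)$, so if $a_{i+2} = x$ and $e_{i+2} = 2$ we cannot have $e_{i+1} = 1$; and the two $y$-versions give the mirror statements. Combining: whenever a run of $x$'s has length $2$, the adjacent $y$-runs must have length $2$ (not $1$), except possibly at the very boundary of the word; and symmetrically. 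This forces the run-length sequence $(e_1, \dots, e_k)$ to be, up to the two boundary runs, either constantly $1$ (giving the alternating words of Example~\ref{ex:alternating}) or constantly $2$ (giving the doubly alternating words of Example~\ref{ex:dalternating}). A short case analysis on the possible boundary behaviors — $e_1 \in \{1,2\}$, $e_k \in \{1,2\}$, and the starting letter $a_1 \in \{x,y\}$ — then matches $w$ with exactly one of the $4 + 16$ families in Examples~\ref{ex:alternating} and~\ref{ex:dalternating}, with the small-$k$ cases ($k = 2, 3$) checked by hand to confirm they also appear (e.g.\ $xyy$, $yxx$, $x$, $y$ arise as the $n=0$ instances in \eqref{eq:dalternating2}, \eqref{eq:dalternating3}).

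The main obstacle is purely bookkeeping: organizing the boundary case analysis so that every avoiding word is matched to one and only one listed family, and confirming that the degenerate small-length words are not accidentally omitted from Example~\ref{ex:dalternating}. There is no deep difficulty — once the ``all runs have length $\le 2$'' and ``length-$2$ runs propagate'' lemmas are in place, the classification is forced — but care is needed to present the $16$ doubly alternating families cleanly, perhaps by splitting on (starting letter, $e_1$, $e_k$) into $2 \times 2 \times 2 = 8$ cases for the alternating regime and likewise for the doubly alternating regime, absorbing the overlap where a word could be read as belonging to the ``$x$-first'' or ``$y$-first'' list.
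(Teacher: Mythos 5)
Your proposal is correct and takes essentially the same route as the paper: both reduce the theorem to the segment-avoidance criterion of Lemma~\ref{lem:wordsinU} and then classify the avoiding words by a local analysis that ends in the same $16$-way check on the boundary letters. Your maximal-run decomposition, with the ``all runs have length $\le 2$'' and ``length-$2$ runs propagate'' steps, is just a more explicit phrasing of the paper's branching on whether $w$ contains $xyx$ or $yxy$ (equivalently, an interior run of length $1$).
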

\begin{proof}
Let $w$ denote a word contained in $U$. We will show that $w$ is listed in Example \ref{ex:single} or \ref{ex:alternating} or \ref{ex:dalternating}. 

\medskip
\noindent We may assume that $w$ has length at least $2$; otherwise the result is trivial. 

\medskip
\noindent If $w$ contains only one of the letters $x$, $y$, then $w$ is listed in (1). For the rest of this proof, we assume that $w$ contains both the letters $x$, $y$. By Lemma \ref{lem:wordsinU}, this assumption implies that $w$ does not contain either of the segments $xxx$, $yyy$. 

\medskip
\noindent If $w$ contains one of the segments $xyx$, $yxy$, then $w$ is listed in \eqref{eq:alternating} by Lemma \ref{lem:wordsinU}. For the rest of this proof, we assume that $w$ does not contain either of the segments $xyx$, $yxy$. 

\medskip
\noindent We consider the first and last two letters in $w$. There are $2^4=16$ choices of the four letters. Using the above assumptions and Lemma \ref{lem:wordsinU}, one can routinely check that these $16$ choices are in one-to-one correspondence with the $16$ types of words listed in \eqref{eq:dalternating1}--\eqref{eq:dalternating4}. 
\end{proof}

\section{Commutator relations involving $x,y$}
In this section, we give a commutator relation for each choice of one letter and one doubly alternating word. This commutator relation is with respect to the $q$-shuffle product. For the commutator relations involving two doubly alternating words, see Appendix B. 

\medskip
\noindent We remark that the doubly alternating words from each line \eqref{eq:dalternating1}--\eqref{eq:dalternating4} appear in separate propositions below. 

\begin{proposition}\rm\label{prop:xcommutators1}
Let $n \in \mN$. The following relations hold in $U$. 

\begin{equation}\label{eq:xcommutators1.1}
[(xxyy)^n,x]_{q^2}=(q^2-q^{-2})(xxyy)^nx, 
\end{equation}
\begin{equation}\label{eq:xcommutators1.2}
[x,(yyxx)^n]_{q^2}=(q^2-q^{-2})x(yyxx)^n, 
\end{equation}
\begin{equation}\label{eq:xcommutators1.3}
[x,(xxyy)^nxx]=0, 
\end{equation}
\begin{equation}\label{eq:xcommutators1.4}
[(yyxx)^nyy,x]=(1-q^{-4})\left((yyxx)^nyyx-xyy(xxyy)^n\right). 
\end{equation}
\end{proposition}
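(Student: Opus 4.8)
The plan is to compute each $q$-shuffle product $(xxyy)^n \star x$, $x \star (yyxx)^n$, $(xxyy)^n xx \star x$ (and its mirror), and $(yyxx)^n yy \star x$ directly from the recursive definition of $\star$, and then read off the commutator relations. The key observation that makes this tractable is that when one shuffles a single letter $u$ into a word $v = v_1 \cdots v_m$, the formula $u \star v = \sum_{i=0}^m v_1 \cdots v_i\, u\, v_{i+1} \cdots v_m\, q^{\langle u,v_1\rangle + \cdots + \langle u,v_i\rangle}$ produces, a priori, $m+1$ words; but here $v$ is doubly alternating in $x,y$, so almost every term in the sum is a word that \emph{contains a forbidden segment} $xxx$, $yyy$, $xyx$, or $yxy$. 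By Lemma \ref{lem:wordsinU} such words lie outside $U$. However, both sides of each claimed identity lie in $U$ (the left side is a commutator of elements of $U$, which is closed under $\star$, and the right side is a single word listed in Example \ref{ex:dalternating}), so after expanding I can legitimately drop every standard-basis word not lying in $U$ — they must cancel among themselves. This reduces each $m+1$-term sum to just the two or three surviving ``good'' words, and then only the $q$-power coefficients need to be tracked.

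Concretely, here is how I would carry it out. For \eqref{eq:xcommutators1.1}: expand $x \star (xxyy)^n$ and $(xxyy)^n \star x$ using the single-letter formulas. Inserting $x$ into $(xxyy)^n$, the only positions that avoid creating $xxx$ or $xyx$ are: at the very front (giving $x(xxyy)^n$, which contains $xxx$ — so actually forbidden unless $n=0$), immediately after each ``$xx yy$'' block so as to continue an $xy$ pattern, and at the very end (giving $(xxyy)^n x$). A careful case check shows the only surviving words are $(xxyy)^n x$ and the word obtained by inserting $x$ right after the leading $xx$, i.e.\ $x\,(xxyy)^n$ reorganized — and tracking the exponent $\langle x,x\rangle + \langle x,x\rangle = 4$ versus the total exponent over the whole word gives the coefficient $q^2$ on one side and $q^{-2}$ on the other, with the leftover $(q^2-q^{-2})(xxyy)^n x$. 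The identity \eqref{eq:xcommutators1.2} is the mirror image of \eqref{eq:xcommutators1.1} under the anti-automorphism reversing words (combined with the $x\leftrightarrow x$ symmetry), so it follows formally once \eqref{eq:xcommutators1.1} is established, or by the same direct computation. For \eqref{eq:xcommutators1.3}, the word $(xxyy)^n xx$ begins and ends with $xx$; inserting a further $x$ anywhere either creates $xxx$ at the front/back or $xyx$ internally, so \emph{every} term of $x \star \big((xxyy)^n xx\big)$ and of $\big((xxyy)^n xx\big) \star x$ lies outside $U$ except possibly a common surviving pair that appears with equal coefficient on both sides; hence the commutator vanishes. For \eqref{eq:xcommutators1.4}, the word $(yyxx)^n yy$ begins and ends with $yy$; inserting $x$ at the front gives $x(yyxx)^n yy$, which after regrouping is the doubly alternating word $xyy(xxyy)^n$ up to reindexing, inserting $x$ at the back gives $(yyxx)^n yyx$, and the exponents $\langle x,y\rangle = -2$ accumulated in passing the $y$'s produce the $q^{-4}$ and the $1$ in the coefficient $(1-q^{-4})$.

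The main obstacle is bookkeeping: I must be scrupulous about (a) which of the $m+1$ shuffle terms actually survive — this requires checking, for each insertion position, all the length-three and length-four segments it creates, since a single forbidden segment like $xyx$ can appear ``off-block'' in a way that is easy to miss; and (b) the $q$-exponents, which are running sums $\langle x, v_1\rangle + \cdots + \langle x, v_i\rangle$ of $\pm 2$'s, so getting the sign and magnitude right for each surviving term (and for both $u \star v$ and $v \star u$, which accumulate the exponent from opposite ends) is where errors creep in. A clean way to organize this is to fix the ``profile'' $\langle x, x\rangle = \langle y,y\rangle = 2$, $\langle x,y\rangle = \langle y,x\rangle = -2$ and note that inserting $x$ after a prefix containing $a$ copies of $x$ and $b$ copies of $y$ contributes $q^{2a-2b}$; since the doubly alternating words have very regular prefixes, $2a - 2b$ takes only a handful of values, and one tabulates them once. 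Having set up this table, each of \eqref{eq:xcommutators1.1}--\eqref{eq:xcommutators1.4} reduces to a two-line verification. As a sanity check I would test $n=0$ and $n=1$ against the known alternating-word relations recorded in Appendix A (e.g.\ for $n=0$, \eqref{eq:xcommutators1.1} reads $[\m1,x]_{q^2} = (q^2-q^{-2})x$, i.e.\ $(q^2-q^{-2})x = (q^2-q^{-2})x$, trivially true, and $n=1$ should match a relation among $W_0$, $\tG_1$, $G_1$).
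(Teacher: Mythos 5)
Your overall plan (expand both $q$-shuffle products via the single-letter insertion formula and compare coefficients) is the same as the paper's, but the central step you use to shortcut the bookkeeping is invalid. You claim that, because $[(xxyy)^n,x]_{q^2}$ lies in $U$, every standard-basis word in its expansion that is not itself contained in $U$ ``must cancel,'' so you may discard all such words at the outset. This does not follow: Lemma \ref{lem:wordsinU} classifies which \emph{words} are elements of $U$, but $U$ is not spanned by those words, and an element of $U$ can have nonzero components along words lying outside $U$. A concrete counterexample is $x\star x\star x\star y$, which belongs to $U$ (it is a $\star$-product of generators) yet equals the nonzero scalar $(1+q^2)(1+q^2+q^4)$ times $\left(xxxy+q^{-2}xxyx+q^{-4}xyxx+q^{-6}yxxx\right)$, a combination of words none of which is in $U$. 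Closer to home, the \emph{untwisted} commutator $[(xxyy)^n,x]$ also lies in $U$, but its expansion contains the words $(xxyy)^ixxxyy(xxyy)^j$ with coefficient $(q^{-4}+q^{-2}+1)-(1+q^2+q^4)\neq 0$; so the cancellation of the non-$U$ words is a special feature of the particular twist $q^2$ in \eqref{eq:xcommutators1.1} (and of the twists chosen in the other three relations), not an automatic consequence of membership in $U$.

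The paper's proof does exactly the work your shortcut tries to avoid: it lists \emph{all} word types occurring in $(xxyy)^n\star x$ and $x\star(xxyy)^n$ --- namely $(xxyy)^ixxxyy(xxyy)^j$ and $(xxyy)^ixxyxy(xxyy)^j$ with $i+j=n-1$, plus $(xxyy)^nx$ --- computes their coefficients ($q^{-4}+q^{-2}+1$ versus $1+q^2+q^4$, $q^{-2}$ versus $q^2$, and $1$ versus $1$), and checks that the first two types cancel under $q^2(\cdot)-q^{-2}(\cdot)$ while the last survives with coefficient $q^2-q^{-2}$. Your argument needs this verification restored for each of \eqref{eq:xcommutators1.1}--\eqref{eq:xcommutators1.4}. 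Two smaller points: your description of the ``surviving'' words is off, since inserting $x$ after the leading $xx$ of $(xxyy)^n$ produces $xxxyy(xxyy)^{n-1}$, which contains $xxx$, and $x(xxyy)^n$ likewise contains $xxx$ for $n\ge 1$, so the only word of the expansion lying in $U$ is $(xxyy)^nx$; on the other hand, your derivation of \eqref{eq:xcommutators1.2} from \eqref{eq:xcommutators1.1} by word reversal is legitimate, since reversal is an anti-automorphism of $(\mV,\star)$.
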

\begin{proof}
We first show \eqref{eq:xcommutators1.1}. 

\medskip
\noindent Clearly \eqref{eq:xcommutators1.1} holds for $n=0$. We now assume $n \geq 1$. 

\medskip
\noindent Note that 
\begin{equation}\label{eq:xcommutatorproof0}
[(xxyy)^n,x]_{q^2}=q^2(xxyy)^n \star x-q^{-2}x \star (xxyy)^n. 
\end{equation}

\medskip
\noindent We write $(xxyy)^n \star x$ (resp.\ $x \star (xxyy)^n$) as a linear combination of words. By the definition of the $q$-shuffle product from Section 2, each word appearing in the linear combination can be expressed in one of the following forms: 

\begin{equation}\label{eq:xcommutatorproof1}
(xxyy)^ixxxyy(xxyy)^j, \text{ where } i+j=n-1; 
\end{equation}
\begin{equation}\label{eq:xcommutatorproof2}
(xxyy)^ixxyxy(xxyy)^j, \text{ where } i+j=n-1; 
\end{equation}
\begin{equation}\label{eq:xcommutatorproof3}
(xxyy)^{n-1}xxyyx. 
\end{equation}

\noindent We compute the coefficients of these words in the linear combination. The coefficient is $q^{-4}+q^{-2}+1$ (resp.\ $1+q^2+q^4$) for a word from \eqref{eq:xcommutatorproof1}, the coefficient is $q^{-2}$ (resp.\ $q^2$) for a word from \eqref{eq:xcommutatorproof2}, and the coefficient is $1$ for the word $(xxyy)^{n-1}xxyyx$. We have proved \eqref{eq:xcommutators1.1}. 

\medskip
\noindent The remaining relations can be proved in a similar way. 
\end{proof}

\begin{proposition}\rm\label{prop:ycommutators1}
Let $n \in \mN$. The following relations hold in $U$. 

\begin{equation}\label{eq:ycommutators1.1}
[y,(xxyy)^n]_{q^2}=(q^2-q^{-2})y(xxyy)^n, 
\end{equation}
\begin{equation}\label{eq:ycommutators1.2}
[(yyxx)^n,y]_{q^2}=(q^2-q^{-2})(yyxx)^ny, 
\end{equation}
\begin{equation}\label{eq:ycommutators1.3}
[y,(xxyy)^nxx]=(1-q^{-4})\left(yxx(yyxx)^n-(xxyy)^nxxy\right), 
\end{equation}
\begin{equation}\label{eq:ycommutators1.4}
[(yyxx)^nyy,y]=0. 
\end{equation}
\end{proposition}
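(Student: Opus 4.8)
The plan is to deduce Proposition \ref{prop:ycommutators1} from Proposition \ref{prop:xcommutators1} using the letter-swapping symmetry of the $q$-shuffle algebra, so that no new computation is needed. Let $\sigma\colon \mV \to \mV$ denote the $\mF$-linear map that sends each word $v_1 v_2 \cdots v_n$ to the word obtained by interchanging $x$ and $y$ throughout. Then $\sigma$ permutes the standard basis and $\sigma^2 = \mathrm{id}$.

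First I would verify that $\sigma$ is an automorphism of the $q$-shuffle algebra $(\mV,\star)$. The point is that the bilinear pairing $\langle\,\cdot\,,\,\cdot\,\rangle$ on letters from Section 2 is invariant under simultaneously swapping $x \leftrightarrow y$ in both of its arguments, since $\langle x,x\rangle=\langle y,y\rangle=2$ and $\langle x,y\rangle=\langle y,x\rangle=-2$. Substituting this into the recursive definition of $\star$ and inducting on the sum of the lengths of the two factors gives $\sigma(u \star v)=\sigma(u)\star\sigma(v)$ for all $u,v \in \mV$, and $\sigma$ fixes the trivial word. Since $U$ is the subalgebra of $(\mV,\star)$ generated by $x$ and $y$ and $\sigma$ interchanges these two generators, it follows that $\sigma(U)=U$; in particular all four relations of Proposition \ref{prop:xcommutators1} may be transported along $\sigma$ while remaining relations in $U$.

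Next I would apply $\sigma$ to each of \eqref{eq:xcommutators1.1}--\eqref{eq:xcommutators1.4}. Writing $[X,Y]_{q^2}=q^2 X\star Y-q^{-2}Y\star X$ and using that $\sigma$ is $\mF$-linear (so it fixes the scalars $q^{\pm2}$, $q^2-q^{-2}$, $1-q^{-4}$), one has $\sigma([X,Y]_{q^2})=[\sigma(X),\sigma(Y)]_{q^2}$ and $\sigma([X,Y])=[\sigma(X),\sigma(Y)]$. Now $\sigma$ interchanges $(xxyy)^n$ with $(yyxx)^n$, $(xxyy)^n xx$ with $(yyxx)^n yy$, $(xxyy)^n xxy$ with $(yyxx)^n yyx$, $xyy(xxyy)^n$ with $yxx(yyxx)^n$, and $x$ with $y$. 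Hence applying $\sigma$ to \eqref{eq:xcommutators1.2} gives \eqref{eq:ycommutators1.1} verbatim; applying $\sigma$ to \eqref{eq:xcommutators1.1} gives \eqref{eq:ycommutators1.2} verbatim; applying $\sigma$ to \eqref{eq:xcommutators1.3} gives $[y,(yyxx)^n yy]=0$, which is \eqref{eq:ycommutators1.4} after using antisymmetry of the ordinary commutator; and applying $\sigma$ to \eqref{eq:xcommutators1.4} gives $[(xxyy)^n xx,\,y]=(1-q^{-4})\big((xxyy)^n xxy-yxx(yyxx)^n\big)$, which is \eqref{eq:ycommutators1.3} after negating both sides.

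There is no genuine obstacle here: the argument is a pure symmetry reduction, and the only points demanding care are the one-line check that $\sigma$ preserves $\star$ and the bookkeeping of the two sign flips needed to line up the ordinary-commutator relations \eqref{eq:ycommutators1.3} and \eqref{eq:ycommutators1.4} with the $\sigma$-images of \eqref{eq:xcommutators1.4} and \eqref{eq:xcommutators1.3}. Alternatively, each of the four relations can be proved directly by expanding $y\star(\,\cdot\,)$ and $(\,\cdot\,)\star y$ in the standard basis, exactly as in the proof of Proposition \ref{prop:xcommutators1}; but invoking $\sigma$ avoids repeating that bookkeeping.
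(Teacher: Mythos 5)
Your proof is correct, but it takes a different route from the paper. The paper proves Proposition \ref{prop:ycommutators1} the same way it proves Proposition \ref{prop:xcommutators1}: by expanding $y\star(\,\cdot\,)$ and $(\,\cdot\,)\star y$ in the standard basis, sorting the resulting words into a few families, and computing the coefficient of each family. You instead transport Proposition \ref{prop:xcommutators1} along the letter-swapping map $\sigma$, and your verification that $\sigma$ is an automorphism of $(\mV,\star)$ is sound: the form $\langle\,\cdot\,,\,\cdot\,\rangle$ on letters is invariant under simultaneously swapping $x\leftrightarrow y$, so the recursion defining $\star$ is preserved, and $\sigma(U)=U$ since $\sigma$ permutes the generators. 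Your matching of the four relations is also accurate, including the two sign flips needed because $\sigma$ reverses neither the order of the arguments nor the antisymmetry of $[\,\cdot\,,\,\cdot\,]$, and because $\sigma([X,Y]_{q^2})=[\sigma(X),\sigma(Y)]_{q^2}$ holds since $\sigma$ fixes scalars. What your approach buys is economy: once $\sigma$ is in hand, Proposition \ref{prop:ycommutators1} (and likewise the $y$-versions of the later propositions in Section 4) follows with no new coefficient bookkeeping, whereas the paper's approach repeats the computation. What the paper's approach buys is self-containedness at each step and a template that also covers relations not related by the symmetry. Either way the result stands; your argument would make a clean replacement for, or supplement to, the paper's one-line proof.
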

\begin{proof}
Similar to the proof of Proposition \ref{prop:xcommutators1}. 
\end{proof}

\begin{proposition}\rm\label{prop:xcommutators2}
Let $n \in \mN$. The following relations hold in $U$. 

\begin{equation}\label{eq:xcommutators2.1}
[xyy(xxyy)^n,x]_q=(q-q^{-3})\left(xyy(xxyy)^nx-(xxyy)^{n+1}\right), 
\end{equation}
\begin{equation}\label{eq:xcommutators2.2}
[x,yxx(yyxx)^n]_q=0, 
\end{equation}
\begin{equation}\label{eq:xcommutators2.3}
[x,x(yyxx)^n]_q=(q^3-q^{-1})(xxyy)^nxx, 
\end{equation}
\begin{equation}\label{eq:xcommutators2.4}
[y(xxyy)^n,x]_q=(q-q^{-3})y(xxyy)^nx. 
\end{equation}
\end{proposition}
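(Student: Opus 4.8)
The plan is to prove Proposition \ref{prop:xcommutators2} by the same method used in the proof of Proposition \ref{prop:xcommutators1}: expand both $q$-shuffle products appearing in each $q$-commutator as linear combinations of the standard basis of words, compare coefficients, and check that the difference equals the claimed right-hand side. For each relation, I would first dispose of the base case $n=0$ by direct computation (for instance, \eqref{eq:xcommutators2.1} with $n=0$ reads $[xyy,x]_q=(q-q^{-3})(xyyx-xxyy)$, which follows from $xyy\star x=xyyx+xxyyq^{-4}+\ldots$ and $x\star xyy=\ldots$), and then treat $n\geq 1$ in general.

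For the inductive step I would use the recursive formulas for $\star$ from Section 2 to write each of $xyy(xxyy)^n\star x$ and $x\star xyy(xxyy)^n$ (and similarly for the other three relations) as a sum over the positions at which the inserted letter $x$ can land. Because $w$ itself is doubly alternating and we are inserting a single $x$, every word in the expansion is obtained from $w$ by splitting $w=uv$ and forming $uxv$; the $q$-power attached is a product of the pairing values $\langle x,\cdot\rangle\in\{2,-2\}$ over the letters of $u$ (or of $v$, for the other product). The key observation is that the resulting words fall into a small number of shapes: those of the form $(\text{prefix})xxx(\text{suffix})$ or $(\text{prefix})xyxy\cdots$ type patterns that, by Lemma \ref{lem:wordsinU}, must cancel in the $q$-commutator since the left-hand side lies in $U$, together with the two ``boundary'' words that survive and make up the right-hand side. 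So the combinatorial core is: (a) identify which words appear, (b) compute the two coefficients (one from each $\star$-product) for each word, and (c) verify that $q^{\pm1}$ times these coefficients cancel except on the surviving boundary words.

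I would organize the coefficient bookkeeping exactly as in Proposition \ref{prop:xcommutators1}: list the word-shapes (e.g., for \eqref{eq:xcommutators2.1} something like $(xxyy)^i xyyxx (xyy)(xxyy)^j$ with $i+j=n-1$, words with an internal $xxx$, words with an internal $yxy$, and the two boundary words $xyy(xxyy)^nx$ and $(xxyy)^{n+1}$), tabulate the coefficient of each shape in $w\star x$ and in $x\star w$, and then read off the $q$-commutator. Relation \eqref{eq:xcommutators2.2} asserts an exact vanishing, which should come out because the coefficient of every word in $q\,(x\star yxx(yyxx)^n)$ equals that in $q^{-1}\,(yxx(yyxx)^n\star x)$; relation \eqref{eq:xcommutators2.3}, with no internal cancellation to a pair of boundary words but a single surviving word $(xxyy)^nxx$, is the mildest case.

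The main obstacle I anticipate is purely computational stamina rather than conceptual: one must correctly compute the geometric-series-like coefficients (sums of the form $q^{-4}+q^{-2}+1$, $1+q^2+q^4$, and longer analogues when the inserted $x$ passes over blocks $xx$ and $yy$ contributing $q^{\pm2}$ and $q^{\mp2}$ respectively) and then see the telescoping that leaves only the boundary terms. A secondary subtlety is keeping the asymmetry straight: $u\star v$ and $v\star u$ use $\langle\cdot,\cdot\rangle$ read from opposite ends, so the four relations, though structurally parallel, require separate (if routine) verification. Since the proof of Proposition \ref{prop:xcommutators1} already models all of this, I would present \eqref{eq:xcommutators2.1} in the same level of detail as \eqref{eq:xcommutators1.1} and then write ``The remaining relations are proved similarly.''
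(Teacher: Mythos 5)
Your proposal is essentially the paper's own proof: the paper disposes of Proposition \ref{prop:xcommutators2} with ``similar to the proof of Proposition \ref{prop:xcommutators1},'' and that proof is exactly the bookkeeping you describe --- expand $w\star x$ and $x\star w$ as sums over insertion positions, classify the resulting words into a few shapes, compute the geometric-series coefficients attached to each shape from the two directions, and read off the $q$-commutator, with the two boundary words surviving. One caveat: your ``key observation'' that the words containing forbidden segments ``must cancel in the $q$-commutator since the left-hand side lies in $U$'' is not a valid inference. Lemma \ref{lem:wordsinU} characterizes which individual words lie in $U$; it does not say that an element of $U$ has zero coefficient on every word outside $U$. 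Indeed $x\star x\star x\star y\in U$ but its expansion has a nonzero coefficient on $xxxy\notin U$, so membership in $U$ buys you no free cancellation. This does not damage your argument, since your step (c) verifies the cancellation by direct computation anyway, but the remark should be deleted or downgraded to a prediction of which words will survive rather than a reason why the others vanish.
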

\begin{proof}
Similar to the proof of Proposition \ref{prop:xcommutators1}. 
\end{proof}

\begin{proposition}\rm\label{prop:ycommutators2}
Let $n \in \mN$. The following relations hold in $U$. 

\begin{equation}\label{eq:ycommutators2.1}
[xyy(xxyy)^n,y]_q=0, 
\end{equation}
\begin{equation}\label{eq:ycommutators2.2}
[yxx(yyxx)^n,y]_q=(q-q^{-3})\left(yxx(yyxx)^ny-(yyxx)^{n+1}\right), 
\end{equation}
\begin{equation}\label{eq:ycommutators2.3}
[x(yyxx)^n,y]_q=(q-q^{-3})x(yyxx)^ny, 
\end{equation}
\begin{equation}\label{eq:ycommutators2.4}
[y,y(xxyy)^n]_q=(q^3-q^{-1})(yyxx)^nyy. 
\end{equation}
\end{proposition}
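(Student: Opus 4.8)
The plan is to mirror the method used in the proof of Proposition \ref{prop:xcommutators1}, since all four relations in Proposition \ref{prop:ycommutators2} are of the same shape: a $q$-commutator $[P,Q]_q = qP\star Q - q^{-1}Q\star P$ (or its reverse) equated to an explicit linear combination of doubly alternating words. The key observation is that once we expand $P\star Q$ and $Q\star P$ into the standard basis, every word that arises differs from the ``target'' doubly alternating words only by a local perturbation: inserting one letter into a length-$4n$ (or $4n+2$, etc.) doubly alternating word produces words of the form $(\text{block})\cdots uuu\cdots(\text{block})$ or $(\text{block})\cdots xyxy\cdots(\text{block})$, i.e.\ words containing a forbidden-adjacent-to-boundary segment. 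So the structure of the proof is: (1) enumerate the words appearing in each shuffle product, grouped by the ``position'' of the inserted letter; (2) compute the scalar coefficient attached to each group using the exponential-weight formula $q^{\langle u,v_1\rangle + \cdots + \langle u,v_i\rangle}$ from the definition of $\star$; (3) take the appropriate $q$-linear combination and observe that the coefficients of all ``interior'' words cancel, leaving only the boundary words with the stated coefficients.

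Concretely, for \eqref{eq:ycommutators2.3} I would first dispose of $n=0$ by direct computation ($[x,y]_q = qxy - q^{-1}yx = $ the claimed $(q-q^{-3})xy$? — one checks the small case matches). For $n\geq 1$, I would write $x(yyxx)^n \star y$ and $y \star x(yyxx)^n$ as linear combinations of standard words. Because $\langle y,y\rangle = 2$ and $\langle y,x\rangle = -2$, inserting a $y$ into $x(yyxx)^n$ at each of the $4n+2$ gaps yields words of the forms $x(yyxx)^i yy y xx(yyxx)^j$ (with $i+j = n-1$ after accounting for boundary blocks), $x(yyxx)^i y \cdot xy x x(yyxx)^j$-type words, and the two boundary words $x(yyxx)^n y$ and $y x(yyxx)^n$; I would tabulate the coefficient for each family (sums of the form $q^{-4}+q^{-2}+1$ or $1+q^2+q^4$ for the triple-$y$ words, $q^{\pm 2}$ for the $xy$-interrupted words, $1$ for the boundary words), exactly as in the model proof. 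Then $q\,(x(yyxx)^n\star y) - q^{-1}\,(y\star x(yyxx)^n)$ has all interior coefficients equal to $q\cdot c - q^{-1}\cdot c' $ which I expect to vanish family-by-family (that is the point of the specific $q$-power $q$ in the $q$-commutator), leaving $q\cdot 1 - q^{-1}\cdot q^{\langle y,\cdot\rangle\text{-stuff}}$ on the boundary words; reconciling that surviving coefficient with $q - q^{-3}$ is the final bookkeeping step. The remaining three relations \eqref{eq:ycommutators2.1}, \eqref{eq:ycommutators2.2}, \eqref{eq:ycommutators2.4} are handled identically, the only differences being which letter is inserted, whether the commutator is $[P,Q]_q$ or $[Q,P]_q$, and which boundary configuration produces a nonzero versus zero right-hand side (e.g.\ \eqref{eq:ycommutators2.1} vanishes because the relevant boundary contributions cancel against each other once the weights are computed).

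A cleaner alternative, which I would use if the direct computation becomes unwieldy, is to deduce Proposition \ref{prop:ycommutators2} from Proposition \ref{prop:ycommutators1} and Proposition \ref{prop:xcommutators2} by an $x\leftrightarrow y$ symmetry argument, or from Proposition \ref{prop:xcommutators2} directly. Indeed the $q$-shuffle algebra admits an automorphism swapping $x$ and $y$ (since the bilinear form $\langle\,,\rangle$ is symmetric in $x,y$), and under this swap the doubly alternating word families are permuted in a predictable way: $(xxyy)^n \leftrightarrow (yyxx)^n$, $xyy(xxyy)^n \leftrightarrow yxx(yyxx)^n$, and so on. Applying this automorphism to the four relations of Proposition \ref{prop:xcommutators2} should produce precisely the four relations of Proposition \ref{prop:ycommutators2} (with $[\,,\,]_q$ possibly becoming $[\,,\,]_q$ on the swapped arguments — one must check the $q$-commutator is compatible with the swap, which it is since the swap is an algebra homomorphism). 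This reduces the proof to a single sentence invoking the symmetry, which is surely the intended ``Similar to the proof of Proposition \ref{prop:xcommutators1}'' shorthand.

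The main obstacle I anticipate is purely clerical rather than conceptual: correctly accounting for the \emph{boundary} terms in the shuffle expansion. When the inserted letter lands at the very beginning or very end of $x(yyxx)^n$ (or adjacent to it), the local pattern it creates may or may not coincide with one of the named right-hand-side words, and the associated $q$-weight is an empty or near-empty product, so these terms are easy to misclassify or miscount. A secondary subtlety is making sure that the ``interior cancellation'' genuinely holds for \emph{every} interior family and not just generically — in particular the triple-letter families and the $xy$-interrupted families must cancel separately, and one should verify the arithmetic $q\cdot(q^{-4}+q^{-2}+1) = q^{-1}\cdot(1+q^2+q^4)$ and $q\cdot q^{-2} = q^{-1}\cdot q^{2}$, which indeed hold. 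Once those identities are in hand, the proof is complete modulo writing out the four analogous tabulations, and I would present it as a brief remark that it parallels Proposition \ref{prop:xcommutators1} (or follows from Proposition \ref{prop:xcommutators2} by the $x\leftrightarrow y$ symmetry), just as the paper does.
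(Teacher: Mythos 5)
Your primary plan --- expand both shuffle products into the standard basis, tabulate the coefficient of each family of words, and check that the interior families cancel in the $q$-linear combination --- is exactly the method the paper uses (its proof is literally ``similar to the proof of Proposition \ref{prop:xcommutators1}''), and for \eqref{eq:ycommutators2.2}, \eqref{eq:ycommutators2.3}, \eqref{eq:ycommutators2.4} it goes through as you describe. Your alternative via the $x\leftrightarrow y$ swap is also legitimate and cleaner: since $\langle\,,\,\rangle$ is invariant under interchanging $x$ and $y$, the letterwise swap is an automorphism of $(\mV,\star)$, and applying it to \eqref{eq:xcommutators2.1}, \eqref{eq:xcommutators2.3}, \eqref{eq:xcommutators2.4} yields \eqref{eq:ycommutators2.2}, \eqref{eq:ycommutators2.4}, \eqref{eq:ycommutators2.3} verbatim.

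The one place you must be careful is \eqref{eq:ycommutators2.1}, and the issue is the argument order in the $q$-commutator, which you gloss over. The swap applied to \eqref{eq:xcommutators2.2} produces $[y,xyy(xxyy)^n]_q=0$, not $[xyy(xxyy)^n,y]_q=0$; these are not equivalent, since $[P,Q]_q=-[Q,P]_{q^{-1}}$, which differs from $-[Q,P]_q$. A direct check at $n=0$ shows that only the first version holds: one computes $y\star xyy=yxyy+(q^{-2}+1+q^2)xyyy$ and $xyy\star y=q^2yxyy+(1+q^2+q^4)xyyy$, whence $[y,xyy]_q=0$ but $[xyy,y]_q=(q^3-q^{-1})\bigl(yxyy+(q^2+1+q^{-2})xyyy\bigr)\neq 0$. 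Consequently, if you run your direct tabulation on \eqref{eq:ycommutators2.1} exactly as stated, the boundary coefficients will not cancel; what your method (in either of its two versions) actually establishes is the order-reversed identity $[y,xyy(xxyy)^n]_q=0$. You should prove that version and flag the discrepancy with the stated form, rather than assert without computation that ``the relevant boundary contributions cancel.''
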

\begin{proof}
Similar to the proof of Proposition \ref{prop:xcommutators1}. 
\end{proof}

\begin{proposition}\rm\label{prop:xcommutators3}
Let $n \in \mN$. The following relations hold in $U$. 

\begin{equation}\label{eq:xcommutators3.1}
[(xxyy)^nxxy,x]_q=0, 
\end{equation}
\begin{equation}\label{eq:xcommutators3.2}
[x,(yyxx)^nyyx]_q=(q-q^{-3})\left(xyy(xxyy)^nx-(yyxx)^{n+1}\right), 
\end{equation}
\begin{equation}\label{eq:xcommutators3.3}
[(xxyy)^nx,x]_q=(q^3-q^{-1})(xxyy)^nxx, 
\end{equation}
\begin{equation}\label{eq:xcommutators3.4}
[x,(yyxx)^ny]_q=(q-q^{-3})x(yyxx)^ny. 
\end{equation}
\end{proposition}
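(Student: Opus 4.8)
\textbf{Proof proposal for Proposition \ref{prop:xcommutators3}.}

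The plan is to proceed exactly as in the proof of Proposition \ref{prop:xcommutators1}: expand each $q$-commutator into a linear combination of words in the standard basis of $\mV$, using the recursive definition of $\star$ from Section 2, and then read off the coefficients. For a relation of the form $[P,x]_q = qP\star x - q^{-1}x\star P$ with $P$ a doubly alternating word, the only places a new $x$ can be inserted are between or around the existing letters of $P$; the words produced are of the form (a letter $x$ appearing inside a block, turning $xx\mapsto xxx$ or $yy\mapsto yxy$ or similar), or an $x$ appended at one of the two ends. For each such word I will compute the coefficient coming from $P\star x$ and from $x\star P$ separately, using that the exponent attached in the shuffle is $q$ raised to a sum of values $\langle x, v_i\rangle \in \{2,-2\}$.

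The key points are as follows. First, \eqref{eq:xcommutators3.1}: in $[(xxyy)^nxxy,x]_q$, every word on the right-hand side appears with total coefficient $0$ once the factors $q$ and $q^{-1}$ are applied; the crucial cancellation is that inserting $x$ immediately before the terminal $y$ of $(xxyy)^nxxy$ (giving back a word with segment $xyy$ or $xxxy$ up to the outer structure) has matching coefficients from both terms, and the appended-$x$ contributions also cancel because the trailing letter of $(xxyy)^nxxy$ is $y$, contributing $q^{\langle x,y\rangle}=q^{-2}$. Second, \eqref{eq:xcommutators3.3}: here $P=(xxyy)^nx$ ends in $x$, so appending $x$ to the right of $P$ from $P\star x$ gives the word $(xxyy)^nxx$ with coefficient $1$ while all other insertions cancel between $qP\star x$ and $q^{-1}x\star P$ except the one producing $(xxyy)^nxx$ from the left (weighted by $q^{\langle x,x\rangle+\langle x,\text{last }x\rangle}$-type factors), and collecting these yields exactly $q^3-q^{-1}$. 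Relations \eqref{eq:xcommutators3.2} and \eqref{eq:xcommutators3.4} are handled the same way, noting that \eqref{eq:xcommutators3.4} is essentially \eqref{eq:xcommutators2.4} with the roles of the two ends swapped, and \eqref{eq:xcommutators3.2} produces two surviving words, $xyy(xxyy)^nx$ and $(yyxx)^{n+1}$, with coefficients $q-q^{-3}$ and $-(q-q^{-3})$ respectively, matching the stated right-hand side.

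I should first verify all four identities directly for $n=0$ (where the words are short and one can expand $\star$ by hand), both as a sanity check and because the $n=0$ case of \eqref{eq:xcommutators3.3} and \eqref{eq:xcommutators3.2} already fixes the scalars $q^3-q^{-1}$ and $q-q^{-3}$; then the general $n$ case follows by the same bookkeeping of insertions, since adding more $xxyy$ blocks in the interior only produces additional words whose coefficients cancel in pairs between $qP\star x$ and $q^{-1}x\star P$ (these are the interior analogues of \eqref{eq:xcommutatorproof1} and \eqref{eq:xcommutatorproof2} in the proof of Proposition \ref{prop:xcommutators1}). The main obstacle is purely organizational: keeping track of the exponent $\langle x,v_1\rangle + \cdots + \langle x,v_i\rangle$ for each insertion position $i$ in a long doubly alternating word and confirming that the interior contributions vanish after multiplying by $q$ and $q^{-1}$ and subtracting. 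Since this is the identical mechanism used in Proposition \ref{prop:xcommutators1}, I will simply write ``Similar to the proof of Proposition \ref{prop:xcommutators1}.''
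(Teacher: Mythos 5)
Your proposal follows exactly the paper's approach: the paper proves Proposition \ref{prop:xcommutators3} by the single line ``Similar to the proof of Proposition \ref{prop:xcommutators1},'' i.e., by expanding $qP\star x - q^{-1}x\star P$ into standard-basis words via the recursive definition of $\star$, classifying the insertion positions, and checking that the interior coefficients cancel while the boundary terms produce the stated scalars. Your bookkeeping (and the $n=0$ sanity checks fixing $q^3-q^{-1}$ and $q-q^{-3}$) is consistent with that computation, so the proposal is correct and essentially identical to the paper's proof.
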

\begin{proof}
Similar to the proof of Proposition \ref{prop:xcommutators1}. 
\end{proof}

\begin{proposition}\rm\label{prop:ycommutators3}
Let $n \in \mN$. The following relations hold in $U$. 

\begin{equation}\label{eq:ycommutators3.1}
[y,(xxyy)^nxxy]_q=(q-q^{-3})\left(yxx(yyxx)^ny-(xxyy)^{n+1}\right), 
\end{equation}
\begin{equation}\label{eq:ycommutators3.2}
[(yyxx)^nyyx,y]_q=0, 
\end{equation}
\begin{equation}\label{eq:ycommutators3.3}
[y,(xxyy)^nx]_q=(q-q^{-3})y(xxyy)^nx, 
\end{equation}
\begin{equation}\label{eq:ycommutators3.4}
[(yyxx)^ny,y]_q=(q^3-q^{-1})(yyxx)^nyy. 
\end{equation}
\end{proposition}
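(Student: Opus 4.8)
The plan is to mirror the proof of Proposition~\ref{prop:xcommutators1} exactly, since the four relations \eqref{eq:ycommutators3.1}--\eqref{eq:ycommutators3.4} are the letter-on-the-left (or letter-on-the-right) companions of the relations already established, and in fact several of them are obtainable from earlier propositions by the natural symmetries of the problem. First I would record the two relevant symmetries of the $q$-shuffle algebra: the antiautomorphism that reverses words (which sends $u\star v$ to $(\,\mathrm{rev}\,v)\star(\mathrm{rev}\,u)$ because $\langle\cdot,\cdot\rangle$ is symmetric), and the automorphism swapping $x\leftrightarrow y$ (which preserves $\langle\cdot,\cdot\rangle$). Under the $x\leftrightarrow y$ swap, \eqref{eq:ycommutators3.1}--\eqref{eq:ycommutators3.4} become the $x$-analogues, namely the statements obtained from Proposition~\ref{prop:xcommutators3} by also reversing words; and conversely \eqref{eq:ycommutators3.1} should match \eqref{eq:xcommutators3.1} or \eqref{eq:ycommutators3.4} should match \eqref{eq:xcommutators3.3} after applying swap and/or reversal. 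So the cleanest writeup is: ``Apply the automorphism $x\leftrightarrow y$, together with the word-reversing antiautomorphism, to the relations of Proposition~\ref{prop:xcommutators3} (or Proposition~\ref{prop:ycommutators2}),'' checking in each of the four cases that the images of the doubly alternating words on the right-hand side are the ones appearing here. For instance $\mathrm{rev}\big((xxyy)^nx\big)=x(yyxx)^n$ and swapping gives $y(xxyy)^n$, which is what appears in \eqref{eq:ycommutators3.3}.

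If one prefers a self-contained argument (which I would include as the fallback), the direct computation goes as follows. The case $n=0$ is checked by hand. For $n\ge 1$, expand $y\star (xxyy)^n xxy$ and $(xxyy)^n xxy \star y$ into the standard basis using the single-letter shuffle formulas from Section~2. Every word that occurs is obtained by inserting one extra $y$ into the word $(xxyy)^n xxy$, so each lies in exactly one of a short list of families: words of the shape $(xxyy)^i\,xxyyy\,(xxyy)^j xxy$ (which I would note are $0$ modulo nothing here — wait, they are legitimate words, not in $U$, but they still appear in $\mathcal V$), words of the shape $(xxyy)^i\,xxyyxy\cdots$, the extremal words $y(xxyy)^n xxy$ and $(xxyy)^n xxyy = (xxyy)^{n+1}$, and $(xxyy)^n xx yy$... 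Concretely, I would tabulate, for each family, the coefficient of that word in $y\star(xxyy)^n xxy$ and in $(xxyy)^n xxy\star y$, form the combination $q\,[(xxyy)^n xxy]\star y$-side minus $q^{-1}\,y\star[(xxyy)^n xxy]$-side as dictated by $[y,w]_q=q\,y\star w-q^{-1}w\star y$, and verify that all the ``interior'' families cancel, leaving exactly $(q-q^{-3})\big(yxx(yyxx)^n y-(xxyy)^{n+1}\big)$. The geometric-type sums $q^{-4}+q^{-2}+1$ and $1+q^2+q^4$ that show up are the same ones already seen in the proof of Proposition~\ref{prop:xcommutators1}, coming from summing $q^{\langle y,\cdot\rangle}$ over a run of two $x$'s followed by the inserted letter.

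The main obstacle is purely bookkeeping: correctly enumerating which words appear when a single $y$ is shuffled into $(xxyy)^n xxy$ (and into the reversed/swapped words for the other three identities) and getting every coefficient right, including the boundary contributions at the two ends of the word, where the shuffle insertion can create the new ``doubly alternating with a different head/tail'' word $yxx(yyxx)^n y$ or collapse to $(xxyy)^{n+1}$. There is no conceptual difficulty and no need for induction if one uses the closed-form single-letter shuffle formulas directly. I would therefore present the proof in one line by invoking the symmetry reduction to Proposition~\ref{prop:xcommutators3}, and remark that a direct verification parallels the proof of Proposition~\ref{prop:xcommutators1}.

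\begin{proof}
Apply the algebra automorphism of $\mV$ that swaps $x \leftrightarrow y$ (this preserves the bilinear form $\langle\,,\,\rangle$, hence the $q$-shuffle product) and the antiautomorphism of $\mV$ that reverses words (this also preserves $\langle\,,\,\rangle$, hence sends $u \star v$ to $(\mathrm{rev}\,v) \star (\mathrm{rev}\,u)$) to the relations in Proposition \ref{prop:xcommutators3}. In each of the four cases, one checks directly that the images of the doubly alternating words under these maps are exactly the words appearing on the two sides of \eqref{eq:ycommutators3.1}--\eqref{eq:ycommutators3.4}; for instance $\mathrm{rev}\big((xxyy)^nx\big)=x(yyxx)^n$, and swapping $x \leftrightarrow y$ yields $y(xxyy)^n$. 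Alternatively, each relation may be proved by a direct computation parallel to the proof of Proposition \ref{prop:xcommutators1}: the case $n=0$ is immediate, and for $n \geq 1$ one expands both $q$-shuffle products appearing in the relevant $q$-commutator into the standard basis, observes that every word that occurs is obtained by inserting one letter $y$ into the relevant doubly alternating word, computes the coefficient of each such word in the two expansions, and verifies that the interior contributions cancel, leaving the stated right-hand side.
\end{proof}
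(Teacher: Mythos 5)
Your fallback argument --- expanding both $q$-shuffle products into the standard basis exactly as in the proof of Proposition \ref{prop:xcommutators1} --- is precisely what the paper does (its proof reads ``Similar to the proof of Proposition \ref{prop:xcommutators1}''), so that part is sound. The symmetry reduction you lead with is a genuinely different and potentially slicker route, and both ingredients you invoke are legitimate: the swap $x\leftrightarrow y$ preserves $\langle\,,\,\rangle$ and hence gives an automorphism of $(\mV,\star)$, and word reversal gives an antiautomorphism because $\langle\,,\,\rangle$ is symmetric. The gap is in which combination you apply. The swap \emph{alone} carries Proposition \ref{prop:xcommutators3} onto the present proposition, with the four equations permuted: \eqref{eq:xcommutators3.2}$\,\mapsto\,$\eqref{eq:ycommutators3.1}, \eqref{eq:xcommutators3.1}$\,\mapsto\,$\eqref{eq:ycommutators3.2}, \eqref{eq:xcommutators3.4}$\,\mapsto\,$\eqref{eq:ycommutators3.3}, \eqref{eq:xcommutators3.3}$\,\mapsto\,$\eqref{eq:ycommutators3.4}. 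Composing the swap with reversal, as your proof block prescribes, overshoots: reversal sends $[u,v]_q$ to $[\mathrm{rev}(v),\mathrm{rev}(u)]_q$, so the composite carries Proposition \ref{prop:xcommutators3} to relations of the shape $[y,xyy(xxyy)^n]_q=0$, $[yxx(yyxx)^n,y]_q=\cdots$, that is, to a version of Proposition \ref{prop:ycommutators2}, not to \eqref{eq:ycommutators3.1}--\eqref{eq:ycommutators3.4}. Your own worked example exposes this: the word $y(xxyy)^n$ you arrive at does not occur anywhere in \eqref{eq:ycommutators3.3}, whose words are $(xxyy)^nx$ and $y(xxyy)^nx$; the former is the image of $(yyxx)^ny$ from \eqref{eq:xcommutators3.4} under the swap alone.

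Two further cautions if you keep the symmetry route. First, if you instead start from Proposition \ref{prop:ycommutators2}, the map that lands on the present proposition is reversal alone, not swap-plus-reversal; and note that \eqref{eq:ycommutators2.1} as printed appears to have its arguments transposed (at $n=0$ one computes $[xyy,y]_q\neq 0$ while $[y,xyy]_q=0$), so quoting that line verbatim would import an error. Second, the paper never introduces either symmetry, so you must state and verify once that the swap respects $\star$ (immediate from the invariance of $\langle\,,\,\rangle$) before using it. With the composite corrected to the swap alone, your reduction is a clean and correct alternative to the paper's direct computation.
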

\begin{proof}
Similar to the proof of Proposition \ref{prop:xcommutators1}. 
\end{proof}

\begin{proposition}\rm\label{prop:xcommutators4}
Let $n \in \mN$. The following relations hold in $U$. 

\begin{equation}\label{eq:xcommutators4.1}
[x,x(yyxx)^ny]=(q^2-q^{-2})(xxyy)^nxxy, 
\end{equation}
\begin{equation}\label{eq:xcommutators4.2}
[y(xxyy)^nx,x]=(q^2-q^{-2})yxx(yyxx)^n, 
\end{equation}
\begin{equation}\label{eq:xcommutators4.3}
[x,xyy(xxyy)^nx]=(q^2-q^{-2})\left((xxyy)^{n+1}x-x(yyxx)^{n+1}\right), 
\end{equation}
\begin{equation}\label{eq:xcommutators4.4}
[x,yxx(yyxx)^ny]=0. 
\end{equation}
\end{proposition}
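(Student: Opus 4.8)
The plan is to prove each of the four identities in Proposition~\ref{prop:xcommutators4} by the same word-counting method used in the proof of Proposition~\ref{prop:xcommutators1}: expand both $x \star w$ and $w \star x$ in the standard basis of $\mV$, where $w$ is the relevant doubly alternating word, and compare coefficients. Since all four commutators here are ordinary commutators (the $q^2$-commutator with a trailing coefficient of $1$ would be $[X,Y]_{q^2}$ with $q^2 = 1$; here we literally have $[X,Y] = XY - YX$), we need $x \star w - w \star x$ computed word-by-word. For a letter $u$ and a word $v = v_1 \cdots v_n$, the definition gives $u \star v = \sum_{i=0}^n v_1 \cdots v_i\, u\, v_{i+1} \cdots v_n\, q^{\langle u,v_1\rangle + \cdots + \langle u,v_i\rangle}$, and symmetrically for $v \star u$; so the whole computation reduces to inserting a single $x$ into every slot of $w$, weighted by a power of $q$ determined by the prefix (resp.\ suffix) to the left (resp.\ right) of the insertion point, and then collecting like words.

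The key structural observation is that inserting an $x$ into a doubly alternating word $w$ (which by hypothesis avoids $xxx$, $yyy$, $xyx$, $yxy$) produces words of only a few shapes: inserting $x$ adjacent to an existing $x$ creates a local $xx$ which is harmless, but inserting $x$ into a $yy$-block creates a forbidden segment $yxy$ unless the insertion is at the very boundary of the word, and inserting $x$ between an $x$ and a $y$ (or $y$ and $x$) in the interior creates either $xxy$/$yxx$ patterns or the forbidden $xyx$. After cancellation between $x \star w$ and $w \star x$, the interior insertions that land on words \emph{not} in $U$ must cancel in pairs (since $x \star w$ and $w \star x$ both lie in $U$, so does their difference, hence the non-$U$ words carry coefficient zero after summing — this is the same phenomenon exploited in Proposition~\ref{prop:xcommutators1}), leaving only a handful of surviving terms. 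For \eqref{eq:xcommutators4.1}, inserting $x$ into $x(yyxx)^ny$ at the two ends and at the $y$-$x$ and $x$-$y$ junctions: most interior insertions either reproduce $x(yyxx)^ny$-type words that cancel, or land outside $U$; the net survivor should be $(xxyy)^nxxy$ with the weight $q^2 - q^{-2}$ assembled from the relevant $\langle\,,\,\rangle$ exponents ($\langle x,x\rangle = 2$, $\langle x,y\rangle = -2$). For \eqref{eq:xcommutators4.2}, by the evident left-right symmetry of the $\star$-product (the two displayed recursions for $u \star v$ and $v \star u$ are mirror images), this identity follows from \eqref{eq:xcommutators4.1} by reversing all words, so it need not be recomputed from scratch. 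Identities \eqref{eq:xcommutators4.3} and \eqref{eq:xcommutators4.4} are handled the same way, with \eqref{eq:xcommutators4.4} being the degenerate case where all surviving insertions cancel — the word $yxx(yyxx)^ny$ is "$x$-balanced" in the sense that inserting $x$ at the front and back contributes opposite-signed equal-magnitude terms, and all interior insertions either cancel against each other or exit $U$.

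Concretely, the steps I would carry out, in order: (1) fix $w$ and write $x \star w = \sum_i (\text{prefix}_i)\, x\, (\text{suffix}_i)\, q^{e_i}$ and $w \star x = \sum_i (\text{prefix}_i)\, x\, (\text{suffix}_i)\, q^{f_i}$ where $e_i$ depends on the prefix of $w$ up to slot $i$ and $f_i$ on the suffix; (2) classify the resulting words by which segments of $w$ they disturb, separating those still in $U$ from those not in $U$; (3) invoke the fact that $x \star w - w \star x \in U$ to conclude the non-$U$ words cancel, so I only need to track the $U$-words; (4) for each surviving $U$-word, sum the (at most two or three) contributing insertion points with their $q$-weights and read off the coefficient; (5) match against the right-hand side. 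I would present \eqref{eq:xcommutators4.1} in full detail and then say the remaining three "can be proved in a similar way," exactly as the earlier propositions do — or, if a referee wants more, give the reversal argument for \eqref{eq:xcommutators4.2} explicitly and note \eqref{eq:xcommutators4.3}, \eqref{eq:xcommutators4.4} are analogous.

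\textbf{Main obstacle.} The bookkeeping is the only real difficulty: $x(yyxx)^ny$ has length $4n+2$, so there are $4n+3$ insertion slots in each of $x \star w$ and $w \star x$, and one must correctly group them into the (constant number of) distinct words that appear, being careful at the two boundaries where a would-be forbidden segment $yxy$ is actually legal because it sits at the edge of $w$. The risk is an off-by-one error in the $q$-exponents at the block boundaries or miscounting how many interior insertions collapse onto the same word; the safeguard is that the non-$U$ coefficients \emph{must} vanish, which provides a built-in consistency check on the exponent arithmetic before one even looks at the surviving terms. I do not anticipate any conceptual obstruction beyond this.
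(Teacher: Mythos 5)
Your overall strategy---expand $x\star w$ and $w\star x$ in the standard basis, group the insertions into a bounded number of word types, and compare coefficients---is exactly the method the paper uses for Proposition \ref{prop:xcommutators1} and then cites for this proposition, and your reversal argument deducing \eqref{eq:xcommutators4.2} from \eqref{eq:xcommutators4.1} is a legitimate extra shortcut: the map sending each word to its reversal is an anti-automorphism of $(\mV,\star)$, by an easy induction from the two displayed recursions, and it carries \eqref{eq:xcommutators4.1} to \eqref{eq:xcommutators4.2}. However, your step (3) contains a genuine logical error. You claim that because $x\star w - w\star x$ lies in $U$, the coefficients of the words not in $U$ must vanish, so those words need not be tracked. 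This is false: $U$ is not the span of the words it contains. For example,
\begin{equation*}
x\star x\star x\star y=(1+q^2)(1+q^2+q^4)\left(xxxy+q^{-2}xxyx+q^{-4}xyxx+q^{-6}yxxx\right)
\end{equation*}
lies in $U$ (it is a $\star$-product of generators), yet every word in its support is forbidden by Lemma \ref{lem:wordsinU}. So membership in $U$ gives no information about the coefficient of an individual non-$U$ word, and it certainly cannot serve as the ``built-in consistency check'' you propose at the end.

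This is also not the phenomenon exploited in the paper's proof of Proposition \ref{prop:xcommutators1}. There the coefficients of the non-$U$ words, namely those of the forms $(xxyy)^ixxxyy(xxyy)^j$ and $(xxyy)^ixxyxy(xxyy)^j$, are computed explicitly on each side ($q^{-4}+q^{-2}+1$ versus $1+q^2+q^4$, and $q^{-2}$ versus $q^2$) and are then seen to cancel in the $q^2$-commutator by direct arithmetic; nothing is deduced from membership in $U$. The repair to your plan is routine but necessary: in steps (2)--(4) you must also compute the $q$-weights of the insertions that produce non-$U$ words and verify their cancellation in the commutator by hand, exactly as for the surviving words. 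With that change your argument coincides with the paper's.
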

\begin{proof}
Similar to the proof of Proposition \ref{prop:xcommutators1}. 
\end{proof}

\begin{proposition}\rm\label{prop:ycommutators4}
Let $n \in \mN$. The following relations hold in $U$. 

\begin{equation}\label{eq:ycommutators4.1}
[x(yyxx)^ny,y]=(q^2-q^{-2})xyy(xxyy)^n, 
\end{equation}
\begin{equation}\label{eq:ycommutators4.2}
[y,y(xxyy)^nx]=(q^2-q^{-2})(yyxx)^nyyx, 
\end{equation}
\begin{equation}\label{eq:ycommutators4.3}
[xyy(xxyy)^nx,y]=0, 
\end{equation}
\begin{equation}\label{eq:ycommutators4.4}
[yxx(yyxx)^ny,y]=(q^2-q^{-2})\left(y(xxyy)^{n+1}-(yyxx)^{n+1}y\right). 
\end{equation}
\end{proposition}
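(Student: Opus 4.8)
The plan is to prove Proposition 4.14 (the four relations \eqref{eq:ycommutators4.1}--\eqref{eq:ycommutators4.4}) by the same direct computational method used in the proof of Proposition \ref{prop:xcommutators1}, namely by expanding each $q$-shuffle product as a linear combination of standard basis words and comparing coefficients. For each relation I would first dispose of small base cases ($n=0$, and possibly $n=1$), then for general $n$ write out the two relevant $q$-shuffle products explicitly. For instance, to prove \eqref{eq:ycommutators4.4} I would expand $yxx(yyxx)^ny \star y$ and $y \star yxx(yyxx)^ny$; each of these is, by the recursive definition of $\star$ in Section 2, a sum over the positions into which the inserted letter $y$ can be placed, each term weighted by a power of $q$ coming from the bilinear pairing $\langle\,,\,\rangle$. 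The words that arise are of a small number of shapes: those where the new $y$ lands next to an existing $y$ (producing a segment $yyy$), those where it lands between an $x$ and a $y$ or between two $x$'s (producing $xyxx$-type or $yxxy$-type segments), and the two extreme cases where the $y$ is prepended or appended. I would tabulate the coefficient of each shape in both products and in the difference $yxx(yyxx)^ny \star y - y \star yxx(yyxx)^ny$.

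The key observation that makes this tractable — and that is implicitly what drives the proof of Proposition \ref{prop:xcommutators1} — is that the overwhelming majority of these words are \emph{not} in $U$: any word containing $yyy$, $xxx$, $xyxx$, $xxyx$, $yxyy$, $yyxy$, etc., is excluded by Lemma \ref{lem:wordsinU}. Since the left-hand side of each relation lies in $U$ (it is a commutator of elements of $U$, and $U$ is a subalgebra), all such non-$U$ words must cancel identically between the two products. So in practice I only need to track the coefficients of the handful of words that actually belong to $U$, i.e.\ the alternating and doubly alternating words of Examples \ref{ex:alternating} and \ref{ex:dalternating}; for \eqref{eq:ycommutators4.4} these are $y(xxyy)^{n+1}$, $(yyxx)^{n+1}y$, and possibly $yxx(yyxx)^ny$ itself. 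One then reads off that the surviving coefficients match the claimed right-hand side. The cancellation of the non-$U$ terms can either be verified directly from the $q$-powers, or invoked as an automatic consequence of the fact (Proposition \ref{prop:orthJ}) that $U \perp J$: since the difference of the two shuffle products lies in $U$, its expansion in the standard basis is supported on $U$-words.

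I would organize the four relations into two symmetric pairs. Relations \eqref{eq:ycommutators4.1} and \eqref{eq:ycommutators4.2} are mirror images of each other under the anti-automorphism of $\mV$ that reverses words (combined, if needed, with the $x \leftrightarrow y$ symmetry), and likewise \eqref{eq:ycommutators4.3} and \eqref{eq:ycommutators4.4}; moreover this entire proposition is obtained from Proposition \ref{prop:xcommutators4} by the $x \leftrightarrow y$ exchange, which is an algebra automorphism of the $q$-shuffle algebra $\mV$ preserving $U$. So it suffices in principle to establish one relation from each pair — or even to simply note that the proposition follows from Proposition \ref{prop:xcommutators4} by applying the $x \leftrightarrow y$ automorphism, after checking that this automorphism sends each word appearing in Proposition \ref{prop:xcommutators4} to the corresponding word here (e.g.\ $x(yyxx)^ny \mapsto y(xxyy)^nx$ matches \eqref{eq:ycommutators4.1}, $xyy(xxyy)^nx \mapsto yxx(yyxx)^ny$ matches \eqref{eq:ycommutators4.3}, etc.).

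The main obstacle, as in all of these propositions, is purely bookkeeping: correctly enumerating the insertion positions and their $q$-weights in the expansion of products like $yxx(yyxx)^ny \star y$, and being careful with the two different recursive formulas for $u \star v$ (inserting from the left versus the right) so as to get the exponents right. There is no conceptual difficulty and no place where a genuinely new idea is needed; the symmetry reduction above cuts the work by a factor of roughly four, and the $U$-membership filter cuts it further by discarding all words with a forbidden segment. Accordingly I would present the proof as: ``This follows from Proposition \ref{prop:xcommutators4} by applying the algebra automorphism of $\mV$ that swaps $x$ and $y$,'' and leave the direct verification as the fallback, exactly parallel to how Propositions \ref{prop:ycommutators1}--\ref{prop:ycommutators3} and \ref{prop:xcommutators2}--\ref{prop:xcommutators4} are handled (``Similar to the proof of Proposition \ref{prop:xcommutators1}'').
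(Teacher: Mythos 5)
Your proposal is correct in substance, and its preferred route differs from the paper's. The paper disposes of this proposition with the same direct expansion used for Proposition \ref{prop:xcommutators1}: write out both $q$-shuffle products, enumerate the shapes of words that occur, and compare coefficients. Your primary argument --- deducing the proposition from Proposition \ref{prop:xcommutators4} via the algebra automorphism of $(\mV,\star)$ that swaps $x$ and $y$ --- is valid, since $\langle x,x\rangle=\langle y,y\rangle$ and $\langle x,y\rangle=\langle y,x\rangle$ mean the swap preserves every $q$-weight in the recursive definition of $\star$, and it fixes $U$ setwise. It does yield all four relations: \eqref{eq:xcommutators4.1} maps to \eqref{eq:ycommutators4.2}, \eqref{eq:xcommutators4.2} to \eqref{eq:ycommutators4.1}, \eqref{eq:xcommutators4.3} to \eqref{eq:ycommutators4.4} (after reversing the order in the commutator and flipping the sign), and \eqref{eq:xcommutators4.4} to \eqref{eq:ycommutators4.3}. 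Note that your stated pairing (``$x(yyxx)^ny\mapsto y(xxyy)^nx$ matches \eqref{eq:ycommutators4.1}'') has the labels crossed, but this is cosmetic. The symmetry argument is cleaner than the paper's, at the cost of presupposing Proposition \ref{prop:xcommutators4}, which itself must still be verified by direct expansion.

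One claim in your fallback direct argument is false and should not be relied upon: it is not true that an element of $U$ has standard-basis expansion supported on words belonging to $U$. Proposition \ref{prop:orthJ} says $U=J^{\perp}$, and orthogonality of $v=\sum_w c_w w$ to $J$ imposes one linear relation among the $c_w$ for each placement of a length-$4$ window; it does not force $c_w=0$ for every $w\notin U$. A counterexample is $x\star x\star x\star y\in U$, whose expansion is supported entirely on $xxxy$, $xxyx$, $xyxx$, $yxxx$, none of which lies in $U$. Consequently, in the direct computation you cannot discard the non-$U$ words a priori as ``an automatic consequence of $U\perp J$''; you must verify, as the paper does in the proof of Proposition \ref{prop:xcommutators1}, that their coefficients cancel between the two shuffle products (they do, but that is part of what has to be checked). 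Your other two justifications --- the explicit $q$-power bookkeeping and the $x\leftrightarrow y$ symmetry --- are both sound, so the overall proposal stands once this shortcut is dropped.
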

\begin{proof}
Similar to the proof of Proposition \ref{prop:xcommutators1}. 
\end{proof}

\section{Doubly alternating and alternating words}
In this section, we express each doubly alternating word as a polynomial in the alternating words. This polynomial is with respect to the $q$-shuffle product. We start by recalling some notions. 

\begin{definition}\rm\label{def:derivatives}
(See \cite[Section 4]{PT}.) For a nontrivial word $w=a_1 \cdots a_n$, define 
\begin{equation*}
x^{-1}w=\begin{cases}a_2 \cdots a_n, & \text{ if }a_1=x;\\0, & \text{ if }a_1 \neq x.\end{cases}
\end{equation*}

\noindent By convention, $x^{-1}\m1=0$. 

\medskip
\noindent For $v \in \mV$, we define $x^{-1}v$ linearly. 

\medskip
\noindent For $v \in \mV$, we define $y^{-1}v$, $vx^{-1}$, $vy^{-1}$ in a similar way. 
\end{definition}

\noindent The above operators are essentially those appearing in \cite[(4.1)]{PT}. These operators were closely examined in \cite{PT}. 

\begin{lemma}\rm\label{lem:eqcond}
Let $v_1,v_2 \in \mV$. The following are equivalent. 
\begin{enumerate}
\item $v_1=v_2$; 
\item $v_1x^{-1}=v_2x^{-1}$ and $v_1y^{-1}=v_2y^{-1}$; 
\item $x^{-1}v_1=x^{-1}v_2$ and $y^{-1}v_1=y^{-1}v_2$. 
\end{enumerate}
\end{lemma}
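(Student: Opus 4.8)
The plan is to prove the chain of equivalences in the cyclic order (i) $\Rightarrow$ (ii) $\Rightarrow$ (i) and then (i) $\Leftrightarrow$ (iii), or more economically to establish (i) $\Rightarrow$ (ii), (i) $\Rightarrow$ (iii), and then the two converses (ii) $\Rightarrow$ (i) and (iii) $\Rightarrow$ (i). The forward implications are immediate: if $v_1 = v_2$ then applying the linear operators $x^{-1}$, $y^{-1}$ (on either side) to both sides yields the stated equalities, since these operators are well-defined linear maps on $\mathcal{V}$. So the entire content is in the converses, and by the obvious left-right symmetry it suffices to treat one of them, say (iii) $\Rightarrow$ (i); the argument for (ii) $\Rightarrow$ (i) is the mirror image.

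For (iii) $\Rightarrow$ (i), I would argue on the standard basis. Write $v_1 - v_2 = \sum_w c_w\, w$ as a (finite) linear combination of words $w$, and suppose for contradiction that this is nonzero. Pick a word $w_0$ of \emph{maximal length} among those with $c_{w_0} \neq 0$. Since $v_1 - v_2 \neq \mathbbm{1}\cdot(\text{scalar})$ cannot be the whole story unless that word is trivial, consider first the case $w_0 = \mathbbm{1}$: then $v_1 - v_2$ is a nonzero scalar multiple of $\mathbbm{1}$, but $x^{-1}\mathbbm{1} = y^{-1}\mathbbm{1} = 0$ gives no contradiction directly — so this degenerate case must be handled separately by noting that $x^{-1}v_1 = x^{-1}v_2$ and $y^{-1}v_1 = y^{-1}v_2$ alone do not see the constant term. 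I will return to this point below. For $w_0$ nontrivial, write $w_0 = a_1 a_2 \cdots a_n$ with $a_1 \in \{x,y\}$. Then in $x^{-1}(v_1 - v_2) = \sum_w c_w\, x^{-1}w$, the word $a_2\cdots a_n$ (length $n-1$) arises exactly from those $w$ in the sum that begin with $x$ and whose tail is $a_2\cdots a_n$; but among words of length $\le n$ the only such $w$ with a nonzero coefficient and beginning with $x$ is $w_0$ itself if $a_1 = x$, and there is no shorter word mapping onto a length-$(n-1)$ word. Hence if $a_1 = x$ the coefficient of $a_2\cdots a_n$ in $x^{-1}(v_1-v_2)$ is $c_{w_0} \neq 0$, contradicting $x^{-1}v_1 = x^{-1}v_2$; if $a_1 = y$, the same argument with $y^{-1}$ applies. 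This disposes of all nontrivial $w_0$.

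The one remaining gap is that the hypotheses in (ii) and (iii) annihilate the trivial word, so strictly speaking (iii) implies only that $v_1 - v_2 \in \mathbb{F}\mathbbm{1}$, not that $v_1 = v_2$. The cleanest fix is to observe that this is in fact \emph{false} as stated unless one also tracks constant terms — so I expect the intended reading is that the statement is still correct because the argument above actually shows $v_1 - v_2 \in \mathbb{F}\mathbbm{1}$ \emph{and} the lemma is applied only in situations where this suffices, or (more likely) the paper intends $x^{-1}, y^{-1}$ together with the ``constant term'' functional to be jointly faithful, which they are. To make the proof self-contained I would add the observation: since $w = \mathbbm{1}$ is the unique word of length $0$ and every nontrivial word is detected by exactly one of $x^{-1}, y^{-1}$ in the length-lowering sense above, an easy induction on $\max\{\text{length}\}$ shows $v_1 - v_2 = \lambda \mathbbm{1}$; and then, if the ambient use requires equality, one appeals to the fact (or the convention in \cite{PT}) that the relevant vectors have no constant term, or one simply states the lemma with that understood.

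\textbf{Main obstacle.} The genuine mathematics here is the maximal-length word argument, which is routine; the only real subtlety — and the thing I would be careful to get right in the writeup — is the bookkeeping of the trivial-word/constant-term case, making sure the statement of the equivalence is literally true rather than true-up-to-a-scalar.
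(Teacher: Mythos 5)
Your proof takes the same route as the paper's (the paper's proof is the single line ``Follows from Definition \ref{def:derivatives}''), namely unwinding the definition of the operators on the standard basis; the essential observation in both cases is that the coefficient of a word $u$ in $x^{-1}v$ equals the coefficient of $xu$ in $v$ (and similarly for $y^{-1}$ and for the right-hand operators), so the pair $x^{-1},y^{-1}$ recovers every coefficient of $v$ except that of the trivial word. Your maximal-length induction is more machinery than this requires --- the direct coefficient comparison already gives both converses --- but it is not wrong. The more substantive point is that the obstacle you flag is real and is a defect of the statement, not of your argument: since $x^{-1}\m1=y^{-1}\m1=0$ by convention (and likewise for the right-hand operators), the line $\mF\m1$ is annihilated by all four maps, so (ii) and (iii) each force only $v_1-v_2\in\mF\m1$, and the lemma as literally stated fails for $v_1=\m1$, $v_2=0$. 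This never bites in the paper, because every invocation of Lemma \ref{lem:eqcond} (in Propositions \ref{prop:conv1}, \ref{prop:W-*tG}, etc.) compares elements that are homogeneous of a fixed positive length, where no constant term can appear; the version actually used is therefore the correct one. Rather than hedging about the ``intended reading,'' you should simply state and prove that corrected version (either restrict to $v_1,v_2$ with zero constant term, or replace (i) by $v_1-v_2\in\mF\m1$); with that adjustment your writeup is complete.
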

\begin{proof}
Follows from Definition \ref{def:derivatives}. 
\end{proof}

\noindent Next we express each doubly alternating word as a polynomial in the alternating words. For notational convenience, we define $G_0=\m1$ and $\tG_0=\m1$. 

\begin{proposition}\rm\label{prop:conv1}
For $n \in \mN$, 

\begin{equation}\label{eq:conv1.1}
\sum_{k=0}^{2n}(-1)^k\tG_k \star \tG_{2n-k}=(-1)^n[2]_q^{2n}(xxyy)^n, 
\end{equation}
\begin{equation}\label{eq:conv1.2}
\sum_{k=0}^{2n}(-1)^kG_k \star G_{2n-k}=(-1)^n[2]_q^{2n}(yyxx)^n, 
\end{equation}
\begin{equation}\label{eq:conv1.3}
\sum_{k=0}^{2n}(-1)^kW_{-k} \star W_{k-2n}=(-1)^nq[2]_q^{2n+1}(xxyy)^nxx, 
\end{equation}
\begin{equation}\label{eq:conv1.4}
\sum_{k=0}^{2n}(-1)^kW_{k+1} \star W_{2n+1-k}=(-1)^nq[2]_q^{2n+1}(yyxx)^nyy. 
\end{equation}
\end{proposition}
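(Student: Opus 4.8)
The plan is to prove the four identities in Proposition~\ref{prop:conv1} by induction on $n$, using the ``derivative'' operators $x^{-1},y^{-1},x^{-1}\cdot,y^{-1}\cdot$ from Definition~\ref{def:derivatives} together with the equivalence criterion of Lemma~\ref{lem:eqcond}. The four statements are interlocked: differentiating a convolution of $\tG$'s will produce convolutions involving $W$'s and $G$'s, so I expect to prove \eqref{eq:conv1.1}--\eqref{eq:conv1.4} simultaneously in one induction, with the inductive step for each identity drawing on the others at the same or lower value of $n$. I would first establish the base case $n=0$, where every left-hand side is just $\m1\star\m1=\m1$ and every right-hand side is $\m1$ (for \eqref{eq:conv1.1}, \eqref{eq:conv1.2}) or $q[2]_q x\star x$-type expression; in fact for $n=0$, \eqref{eq:conv1.3} reads $W_0\star W_0 = x\star x = xx + xx\,q^{2} $, wait---more carefully $x\star x = xx(1+q^2)$, and $q[2]_q = q(q+q^{-1}) = q^2+1$, so the base case checks out, and similarly for \eqref{eq:conv1.4}.

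The key computational input I would need is a set of ``reduction formulas'' for how $x^{-1}$, $y^{-1}$, and right-sided versions act on the $q$-shuffle product of two words. The crucial fact is that $x^{-1}$ is a twisted derivation for $\star$: for words $u,v$ one has a Leibniz-type rule $x^{-1}(u\star v) = (x^{-1}u)\star v + (\text{power of }q)\, u\star(x^{-1}v)$ when the leading letters are controlled, and similar when only one of $u,v$ begins with $x$. Applying $x^{-1}$ and $y^{-1}$ to both sides of each proposed identity, I would use such a rule to rewrite $x^{-1}\big(\sum_k(-1)^k \tG_k\star\tG_{2n-k}\big)$. Since $\tG_k = (xy)^k$ begins with $x$ for $k\ge 1$ (and $\tG_0=\m1$ is killed), and $W_{-(k-1)}=(xy)^{k-1}x$ is what remains, the left side will collapse to a sum involving $W_{-i}\star \tG_{2n-j}$ or $\tG\star W$ terms; reindexing and using the already-known identities (or the inductive hypothesis) should match it against the derivative of the right-hand side, $(-1)^n[2]_q^{2n}\,x^{-1}\big((xxyy)^n\big) = (-1)^n[2]_q^{2n}(xyy)(xxyy)^{n-1}$ for $n\ge 1$. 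The right derivatives $\,\cdot x^{-1}$ and $\,\cdot y^{-1}$ of the same identities give the complementary relations and, via Lemma~\ref{lem:eqcond}, pin down the full equality.

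In practice I would organize the bookkeeping by treating \eqref{eq:conv1.1}--\eqref{eq:conv1.2} and \eqref{eq:conv1.3}--\eqref{eq:conv1.4} as two linked pairs related by the $x\leftrightarrow y$ symmetry of the whole setup (the automorphism of $\mV$ swapping $x,y$ swaps $\tG_n\leftrightarrow G_n$, $W_{-n}\leftrightarrow W_{n+1}$, and $(xxyy)^n\leftrightarrow(yyxx)^n$), so it suffices to prove \eqref{eq:conv1.1} and \eqref{eq:conv1.3} and then invoke symmetry. For \eqref{eq:conv1.1}, applying $x^{-1}$ should land essentially on a scalar multiple of the left side of \eqref{eq:conv1.3} at level $n-1$ (up to a further $y^{-1}$ or an index shift), and applying $y^{-1}$ should give $0$ or a multiple of something already handled, since $\tG_k$ never begins with $y$; this is what makes the induction close. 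The alternating-word identities collected in Appendix~A (the Terwilliger relations) will be needed to simplify $W\star W$ and $W\star \tG$ products along the way.

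The main obstacle I anticipate is not conceptual but combinatorial: getting the $q$-powers exactly right in the twisted Leibniz rule for $x^{-1}(u\star v)$ and then tracking them through the alternating sum and the reindexing, so that the telescoping/collapse produces precisely $(-1)^n q[2]_q^{2n+1}$ and $(-1)^n[2]_q^{2n}$ rather than something off by a power of $q$ or a sign. A secondary difficulty is that the convolution $\sum_k(-1)^kW_{-k}\star W_{k-2n}$ in \eqref{eq:conv1.3} mixes $W_{-k}=(xy)^kx$ (which begins with $x$) and $W_{k-2n}$ which, being $W$ with a \emph{positive or negative} subscript, begins with $x$ when the subscript is $\le 0$ and with $y$ when it is $\ge 1$; so the Leibniz rule splits into cases according to $k$ vs.\ $2n-k$, and assembling these cases into a single clean alternating identity is where the careful work lies. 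Once the $x^{-1}$ and $\,\cdot y^{-1}$ (or $y^{-1}$ and $\,\cdot x^{-1}$) images both match, Lemma~\ref{lem:eqcond} finishes the inductive step.
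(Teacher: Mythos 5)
Your plan coincides with the paper's proof: the paper also proves \eqref{eq:conv1.1} and \eqref{eq:conv1.3} by an interlocking induction on $n$ (each inductive step reduced, via two applications of Lemma~\ref{lem:eqcond} and the derivative operators, to the inductive hypothesis plus a telescoping identity among alternating words), and then obtains \eqref{eq:conv1.2}, \eqref{eq:conv1.4} by the same argument under the $x\leftrightarrow y$ symmetry. The only immaterial differences are that the paper uses the right derivatives $(\,\cdot\,)x^{-1}$, $(\,\cdot\,)y^{-1}$ where you lean toward the left ones, and that the auxiliary odd-index cancellations are checked directly rather than via the Appendix~A relations.
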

\begin{proof}
We first show \eqref{eq:conv1.1} and \eqref{eq:conv1.3} by induction on $n$. 

\medskip
\noindent Clearly \eqref{eq:conv1.1} and \eqref{eq:conv1.3} hold for $n=0$. We now show the inductive steps. For $n \in \mN$ consider the following assertions: 
\begin{enumerate}
\item if \eqref{eq:conv1.3} holds for $n$, then \eqref{eq:conv1.1} holds for $n+1$; 
\item if \eqref{eq:conv1.1} holds for $n+1$, then \eqref{eq:conv1.3} holds for $n+1$. 
\end{enumerate}

\noindent We first show (i). Assume that \eqref{eq:conv1.3} holds for $n$. To show that \eqref{eq:conv1.1} holds for $n+1$, by Lemma \ref{lem:eqcond} it suffices to show that 
\begin{equation}\label{eq:conv1i1}
\sum_{k=0}^{2n+1}(-1)^k\tG_k \star W_{k-2n-1}+\sum_{k=1}^{2n+2}(-1)^kW_{1-k} \star \tG_{2n+2-k}=(-1)^{n+1}[2]_q^{2n+2}(xxyy)^nxxy. 
\end{equation}

\noindent To show \eqref{eq:conv1i1}, by Lemma \ref{lem:eqcond} it suffices to show that 
\begin{equation}\label{eq:conv1i2}
\sum_{k=0}^{2n+1}(-1)^k\tG_k \star \tG_{2n+1-k}+\sum_{k=1}^{2n+2}(-1)^k\tG_{k-1} \star \tG_{2n+2-k}=0
\end{equation}
and 
\begin{equation}\label{eq:conv1i3}
\sum_{k=1}^{2n+1}(-1)^k\left(q^{-2}W_{1-k} \star W_{k-2n-1}+W_{1-k} \star W_{k-2n-1}\right)=(-1)^{n+1}[2]_q^{2n+2}(xxyy)^nxx. 
\end{equation}
\noindent Note that \eqref{eq:conv1i2} can be checked by routine computation and \eqref{eq:conv1i3} follows from the inductive hypothesis. Consequently \eqref{eq:conv1i1} holds and \eqref{eq:conv1.1} holds for $n+1$. We have proved (i). 

\medskip
\noindent We now show (ii). Assume that \eqref{eq:conv1.1} holds for $n+1$. To show that \eqref{eq:conv1.3} holds for $n+1$, by Lemma \ref{lem:eqcond} it suffices to show that 
\begin{equation}\label{eq:conv1ii1}
\sum_{k=0}^{2n+2}(-1)^k\left(W_{-k} \star \tG_{2n+2-k}+q^2\tG_k \star W_{k-2n-2}\right)=(-1)^{n+1}q[2]_q^{2n+3}(xxyy)^{n+1}x. 
\end{equation}

\noindent To show \eqref{eq:conv1ii1}, by Lemma \ref{lem:eqcond} it suffices to show that 
\begin{equation}\label{eq:conv1ii2}
\sum_{k=0}^{2n+2}(-1)^k\left(\tG_k \star \tG_{2n+2-k}+q^2\tG_k \star \tG_{2n+2-k}\right)=(-1)^{n+1}q[2]_q^{2n+3}(xxyy)^{n+1}
\end{equation}
and 
\begin{equation}\label{eq:conv1ii3}
\sum_{k=0}^{2n+1}(-1)^kW_{-k} \star W_{k-2n-1}+\sum_{k=1}^{2n}(-1)^kW_{1-k} \star W_{k-2n-2}=0. 
\end{equation}

\noindent Note that \eqref{eq:conv1ii2} follows from the inductive hypothesis and \eqref{eq:conv1ii3} can be checked by routine computation. Consequently \eqref{eq:conv1ii1} holds and \eqref{eq:conv1.3} holds for $n+1$. We have proved (ii). 

\medskip
\noindent By the above discussions, we have proved \eqref{eq:conv1.1} and \eqref{eq:conv1.3}. Similarly, we can prove \eqref{eq:conv1.2} and \eqref{eq:conv1.4} using Lemma \ref{lem:eqcond} and induction on $n$. 
\end{proof}

\begin{proposition}\rm\label{prop:conv1odd}
For $n \in \mN$, 

\begin{equation}\label{eq:conv1.1odd}
\sum_{k=0}^{2n+1}(-1)^k\tG_k \star \tG_{2n+1-k}=0, 
\end{equation}
\begin{equation}\label{eq:conv1.2odd}
\sum_{k=0}^{2n+1}(-1)^kG_k \star G_{2n+1-k}=0, 
\end{equation}
\begin{equation}\label{eq:conv1.3odd}
\sum_{k=0}^{2n+1}(-1)^kW_{-k} \star W_{k-2n-1}=0, 
\end{equation}
\begin{equation}\label{eq:conv1.4odd}
\sum_{k=0}^{2n+1}(-1)^kW_{k+1} \star W_{2n+2-k}=0. 
\end{equation}
\end{proposition}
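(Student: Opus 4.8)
The plan is to prove Proposition~\ref{prop:conv1odd} by the same bootstrapping induction on $n$ that was used for Proposition~\ref{prop:conv1}, exploiting Lemma~\ref{lem:eqcond} to reduce each identity to statements about the operators $x^{-1}$, $y^{-1}$ applied to both sides. Since the four displays \eqref{eq:conv1.1odd}--\eqref{eq:conv1.4odd} all assert that a certain alternating convolution of alternating words vanishes, I would first observe that the $n=0$ case is immediate: for instance \eqref{eq:conv1.1odd} reads $\tG_0 \star \tG_1 - \tG_1 \star \tG_0 = xy - xy = 0$, and the other three base cases are similarly trivial.

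For the inductive step I would mirror the two-part argument of Proposition~\ref{prop:conv1}: show that the odd identity for the $W$'s at level $n$ implies the odd identity for the $\tG$'s (and $G$'s) at level $n$, and conversely. Concretely, to establish \eqref{eq:conv1.1odd} for a given $n$ one applies Lemma~\ref{lem:eqcond}, peeling off the leading and trailing letters; because every $\tG_k$ begins with $x$ and ends with $y$, computing $x^{-1}(\tG_k \star \tG_{2n+1-k})$ and $(\tG_k \star \tG_{2n+1-k})y^{-1}$ produces convolutions of shorter alternating words, and the index bookkeeping yields a combination of a $\tG\star\tG$-convolution of odd total degree $2n-1$ and a $W\star W$-convolution of odd total degree. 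These reduced sums vanish by the inductive hypothesis together with \eqref{eq:conv1.3odd}, \eqref{eq:conv1.4odd} (and the analogous reduction in the other direction handles \eqref{eq:conv1.3odd}, \eqref{eq:conv1.4odd} from the $\tG$, $G$ identities). The key mechanical inputs are exactly the $q$-shuffle splitting rules $x \star v$, $v \star x$, $u \star v$ from Section~2, applied to a leading or trailing letter; these are precisely the "routine computation" steps flagged in the proof of Proposition~\ref{prop:conv1}, and I would not write them out in full.

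I should also record the observation that makes the odd case genuinely easier than the even case: in Proposition~\ref{prop:conv1} the nonvanishing right-hand side $(-1)^n[2]_q^{2n}(xxyy)^n$ forced one to track an exact scalar, whereas here the target is $0$, so the reductions via Lemma~\ref{lem:eqcond} only ever need to show a shorter sum is zero. In fact there is a slicker route worth mentioning: the convolution $\sum_{k=0}^{m}(-1)^k \tG_k \star \tG_{m-k}$ is, up to sign, the coefficient of $t^m$ in $\big(\sum_{k\ge 0}(-t)^k\tG_k\big)\star\big(\sum_{k\ge 0}t^k\tG_k\big)$; the substitution $t\mapsto -t$ in the first factor versus $t$ in the second means that for odd $m$ the coefficient is antisymmetric under swapping the two factors, and since $\star$ restricted to these generating series is commutative (the $\tG_k$ all lie in the commutative subalgebra they generate, as is implicit in Appendix~A's relations), the odd coefficients must vanish. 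If the commutativity input is available cleanly from the earlier material I would present that; otherwise I would fall back on the explicit induction.

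The main obstacle I anticipate is the index arithmetic in the inductive step: keeping the ranges of summation, the shifts $k \mapsto k\pm 1$, and the $q$-power weights consistent when one strips a letter off a $q$-shuffle product of two alternating words. This is the same delicate bookkeeping that appears in \eqref{eq:conv1i2} and \eqref{eq:conv1ii3}, and getting the telescoping/cancellation signs right in the odd case is where care is needed; there is no conceptual difficulty beyond that, since all the algebraic machinery (Lemma~\ref{lem:eqcond}, the shuffle rules, and Proposition~\ref{prop:conv1}) is already in place.
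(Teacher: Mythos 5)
Your ``slicker route'' is precisely the paper's proof, and it should be promoted from an aside to the main argument. The paper invokes \cite[Proposition 5.10]{ter_alternating} (equivalently, the relations $[\tG_{i+1},\tG_{j+1}]=0$, $[G_{i+1},G_{j+1}]=0$, $[W_{-i},W_{-j}]=0$, $[W_{i+1},W_{j+1}]=0$ recorded in Appendix A), splits each odd sum into the ranges $0\le k\le n$ and $n+1\le k\le 2n+1$, reindexes the second half by $k\mapsto 2n+1-k$, and cancels term against term using commutativity and the sign $(-1)^{2n+1-k}=-(-1)^k$. So the commutativity input is available cleanly, no induction or appeal to Lemma~\ref{lem:eqcond} is needed, and the whole proposition is a two-line symmetry argument. (The same observation shows your $n=0$ base cases are not all ``trivial'': $W_1\star W_2-W_2\star W_1=0$ already requires commutativity or a direct computation.)

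The primary route you propose has a genuine gap as described. When you strip a leading or trailing letter from $\sum_{k}(-1)^k\tG_k\star\tG_{2n+1-k}$, the shuffle recursion removes that letter from one factor \emph{or} the other, so you do not land on a shorter $\tG\star\tG$ convolution plus a $W\star W$ convolution. For instance, applying $x^{-1}$ on the left produces the mixed sum $\sum_k(-1)^k\bigl(W_k\star\tG_{2n+1-k}+\tG_k\star W_{2n+1-k}\bigr)$, which after reindexing the second piece becomes $\sum_k(-1)^k[W_k,\tG_{2n+1-k}]$; killing this requires the symmetry $[W_{i+1},\tG_{j+1}]=[W_{j+1},\tG_{i+1}]$ from Appendix A (or results like Proposition~\ref{prop:W-*tG}, which the paper derives \emph{from} Proposition~\ref{prop:conv1odd}), not the inductive hypothesis together with \eqref{eq:conv1.3odd} and \eqref{eq:conv1.4odd} as you claim. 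The induction could likely be repaired by carrying all the mixed families along simultaneously, but as written the reduction does not close; the symmetry-plus-commutativity argument is both correct and the one to present.
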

\begin{proof}
We first show \eqref{eq:conv1.1odd}. 

\medskip
\noindent By \cite[Proposition 5.10]{ter_alternating}, the words $\{\tG_k\}_{k \in \mN}$ mutually commute. Therefore, 
\begin{align*}
&\sum_{k=0}^{2n+1}(-1)^k\tG_k \star \tG_{2n+1-k}\\
&=\sum_{k=0}^{n}(-1)^k\tG_k \star \tG_{2n+1-k}+\sum_{k=n+1}^{2n+1}(-1)^k\tG_k \star \tG_{2n+1-k}\\
&=\sum_{k=0}^{n}(-1)^k\tG_k \star \tG_{2n+1-k}+\sum_{k=0}^{n}(-1)^{2n+1-k}\tG_{2n+1-k} \star \tG_k\\
&=\sum_{k=0}^{n}(-1)^k\tG_k \star \tG_{2n+1-k}-\sum_{k=0}^{n}(-1)^k\tG_k \star \tG_{2n+1-k}\\
&=0. 
\end{align*}

\noindent The remaining relations can be proved in a similar way. 
\end{proof}

\begin{proposition}\rm\label{prop:W-*tG}
For $n \in \mN$, 

\begin{equation}\label{eq:conv3.3}
\begin{split}
\sum_{k=0}^{2n}(-1)^kW_{-k} \star \tG_{2n-k}&=(-1)^n[2]_q^{2n}(xxyy)^nx\\
&=\sum_{k=0}^{2n}(-1)^k\tG_{2n-k} \star W_{-k}, 
\end{split}
\end{equation}
\begin{equation}\label{eq:conv3.1}
\begin{split}
q^{-1}\sum_{k=0}^{2n+1}(-1)^kW_{-k} \star \tG_{2n+1-k}&=(-1)^n[2]_q^{2n+1}(xxyy)^nxxy\\
&=q\sum_{k=0}^{2n+1}(-1)^k\tG_{2n+1-k} \star W_{-k}. 
\end{split}
\end{equation}
\end{proposition}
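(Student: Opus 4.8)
**Proof proposal for Proposition \ref{prop:W-*tG}.**

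The plan is to imitate the inductive bootstrap used in Proposition \ref{prop:conv1}: I will prove the two identities simultaneously by induction on $n$, using Lemma \ref{lem:eqcond} to reduce each step to checking equality after applying the operators $x^{-1}$ and $y^{-1}$ on the left (and, for the ``mirror'' equalities, $x^{-1}(\cdot)$ and $(\cdot)x^{-1}$, whichever is cleaner given the leading/trailing letters of the words involved). The two displayed equations \eqref{eq:conv3.3} and \eqref{eq:conv3.1} each assert \emph{two} equalities: that a certain alternating sum of $\star$-products equals the doubly alternating word $(xxyy)^nx$ (resp.\ $(xxyy)^nxxy$, up to the scalar and sign), and that the same holds with the order of the two factors in each $\star$-product reversed. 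For the reversed-order equalities I would first observe that by \cite[Proposition 5.10]{ter_alternating} and the companion commutation facts for the $W$'s quoted in Appendix A, the families $\{\tG_k\}$ and $\{W_{-k}\}$ commute in the relevant pattern, so the two sums in \eqref{eq:conv3.3}, and likewise in \eqref{eq:conv3.1}, are literally equal term by term after re-indexing; this disposes of the ``$=\sum \tG \star W$'' halves immediately and reduces everything to the ``$W \star \tG$'' sums.

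Next, for the base case $n=0$: \eqref{eq:conv3.3} reads $W_0 \star \tG_0 = x$, which is $x \star \m1 = x$, and \eqref{eq:conv3.1} reads $q^{-1}(W_0 \star \tG_1 - W_{-1}\star \tG_0) = x\star xy - xyx$, which by the explicit $\star$-computations of Section 2 equals $[2]_q\,xxy$; both hold. For the inductive step I would set up the same two-way implication as in Proposition \ref{prop:conv1}: (i) assuming \eqref{eq:conv3.3} for $n$, deduce \eqref{eq:conv3.1} for $n$; and (ii) assuming \eqref{eq:conv3.1} for $n$, deduce \eqref{eq:conv3.3} for $n+1$. In each implication, Lemma \ref{lem:eqcond} turns the target into two smaller identities: one that again involves only $\tG$'s and $W$'s at smaller total degree (hence reduces to the inductive hypothesis, possibly combined with \eqref{eq:conv1.1}, \eqref{eq:conv1.3}, and the odd-index vanishing in Proposition \ref{prop:conv1odd}), and one ``leftover'' identity among $\tG$'s (and the scalar factors $q^{\pm 2}$ produced by the $q$-shuffle recursion) that is purely routine. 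The scalars $[2]_q^{2n}$ versus $[2]_q^{2n+1}$ and the stray $q^{\pm 1}$ in \eqref{eq:conv3.1} are exactly what the $\langle u,v\rangle$-exponents in the $\star$-recursion produce when one strips a leading $x$ off $(xxyy)^n x$ versus $(xxyy)^n xxy$, so I would track those carefully but expect no surprises.

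The main obstacle I anticipate is bookkeeping rather than ideas: when I apply $x^{-1}$ to $W_{-k}\star \tG_{2n-k}$ I must use the Leibniz-type rule for how $x^{-1}$ interacts with $\star$ (namely $x^{-1}(u\star v) = (x^{-1}u)\star v + u \star (x^{-1}v)$ up to the $q$-weight coming from moving $x$ past the first factor — this is the content of \cite[Section 4]{PT}, implicit in Definition \ref{def:derivatives}), and the leading letters of $W_{-k}$ and $\tG_{2n-k}$ are both $x$ (when nontrivial) while $W_0=x$ and $\tG_0=\m1$ are edge cases that must be handled separately in the summation ranges. Getting the index shifts and the handful of $q^{\pm 2}$ factors to line up so that the ``leftover'' sum telescopes to zero (exactly as \eqref{eq:conv1i2} and \eqref{eq:conv1ii3} did in Proposition \ref{prop:conv1}) is the crux; once that pattern is confirmed, the remaining piece is handed to the inductive hypothesis and Proposition \ref{prop:conv1odd} verbatim. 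I would then remark that \eqref{eq:conv1.2}, \eqref{eq:conv1.4} and their $G$-analogues give the mirror-image statements with $x\leftrightarrow y$ for free, should they be needed later.
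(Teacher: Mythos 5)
There is a genuine error in your treatment of the ``reversed-order'' halves. You claim that because $\{\tG_k\}$ and $\{W_{-k}\}$ ``commute in the relevant pattern,'' the sums $\sum_k(-1)^kW_{-k}\star \tG_{m-k}$ and $\sum_k(-1)^k\tG_{m-k}\star W_{-k}$ are ``literally equal term by term after re-indexing.'' They are not: Appendix A gives only the symmetry $[W_{-i},\tG_{j+1}]=[W_{-j},\tG_{i+1}]$, not the vanishing of individual commutators, so at best the \emph{alternating sum} of commutators cancels in pairs. That pairwise cancellation does happen in the even case \eqref{eq:conv3.3}, but in the odd case \eqref{eq:conv3.1} the two sums are \emph{not} equal — the proposition itself says one is $q$ times the middle expression and the other is $q^{-1}$ times it, so they differ by a factor of $q^2$. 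Any argument that concludes they coincide is proving something false, so this part of your plan cannot be repaired by bookkeeping; the $q^{\pm 1}$ asymmetry has to come out of an actual computation.

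Separately, your overall route (a fresh two-way induction modeled on Proposition \ref{prop:conv1}) is much heavier than what the paper does, and the extra machinery is where your sketch is vaguest. The paper's proof is a two-line reduction with no induction at all: apply the \emph{right} derivative operators to the sum. Since $W_{-k}$ ends in $x$ and $\tG_{2n-k}$ ends in $y$ (with zero net $q$-weight), one gets
\begin{equation*}
\Bigl(\sum_{k=0}^{2n}(-1)^kW_{-k}\star \tG_{2n-k}\Bigr)x^{-1}
=\sum_{k=0}^{2n}(-1)^k\tG_k\star \tG_{2n-k},
\qquad
\Bigl(\sum_{k=0}^{2n}(-1)^kW_{-k}\star \tG_{2n-k}\Bigr)y^{-1}
=\sum_{k=0}^{2n-1}(-1)^kW_{-k}\star W_{k-2n+1},
\end{equation*}
which are already evaluated by \eqref{eq:conv1.1} and \eqref{eq:conv1.3odd}; Lemma \ref{lem:eqcond} then finishes the first equality of \eqref{eq:conv3.3}, and the other three equalities are handled the same way. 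The key point you missed is that applying $x^{-1}$ or $y^{-1}$ \emph{on the right} keeps you inside the families $\{\tG_k\},\{W_{-k}\}$ (namely $W_{-k}x^{-1}=\tG_k$ and $\tG_{m}y^{-1}=W_{1-m}$), so Propositions \ref{prop:conv1} and \ref{prop:conv1odd} apply directly and no new induction is needed; applying the operators on the left, as you tentatively suggest, drags in $G_k$ and $W_{k+1}$ and loses this reduction.
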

\begin{proof}
Note that 
\begin{align*}
\left(\sum_{k=0}^{2n}(-1)^kW_{-k} \star \tG_{2n-k}\right)x^{-1}=\sum_{k=0}^{2n}(-1)^k\tG_k \star \tG_{2n-k}=(-1)^n[2]_q^{2n}(xxyy)^n
\end{align*}
by \eqref{eq:conv1.1}, and 
\begin{align*}
\left(\sum_{k=0}^{2n}(-1)^kW_{-k} \star \tG_{2n-k}\right)y^{-1}=\sum_{k=0}^{2n-1}(-1)^kW_{-k} \star W_{k-2n+1}=0
\end{align*}
by \eqref{eq:conv1.3odd}. 

\medskip
\noindent By Lemma \ref{lem:eqcond}, we have proved the first equality in \eqref{eq:conv3.3}. The remaining equalities can be proved in a similar way. 
\end{proof}

\begin{proposition}\rm\label{prop:W-*G}
For $n \in \mN$, 

\begin{equation}\label{eq:conv2.3}
\begin{split}
\sum_{k=0}^{2n}(-1)^kW_{-k} \star G_{2n-k}&=(-1)^n[2]_q^{2n}x(yyxx)^n\\
&=\sum_{k=0}^{2n}(-1)^kG_{2n-k} \star W_{-k}, 
\end{split}
\end{equation}
\begin{equation}\label{eq:conv2.2}
\begin{split}
q\sum_{k=0}^{2n+1}(-1)^kW_{-k} \star G_{2n+1-k}&=(-1)^n[2]_q^{2n+1}yxx(yyxx)^n\\
&=q^{-1}\sum_{k=0}^{2n+1}(-1)^kG_{2n+1-k} \star W_{-k}. 
\end{split}
\end{equation}
\end{proposition}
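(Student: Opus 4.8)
The plan is to mirror the proof of Proposition \ref{prop:W-*tG}: use Lemma \ref{lem:eqcond} to reduce each asserted identity to a pair of identities obtained by applying the left derivations $x^{-1}$ and $y^{-1}$, and then recognize the resulting convolution sums of alternating words as instances of Propositions \ref{prop:conv1} and \ref{prop:conv1odd}. The only tool needed is the Leibniz-type rule for $x^{-1},y^{-1}$ on a $\star$-product, read off from the recursive definition of $\star$ in Section 2: for words $u,v$,
\begin{equation*}
x^{-1}(u \star v)=(x^{-1}u) \star v+q^{\langle x,u \rangle}\,u \star (x^{-1}v),
\end{equation*}
and likewise for $y^{-1}$, where $\langle x,u \rangle$ abbreviates $\sum_i \langle x,u_i \rangle$ for $u=u_1 \cdots u_r$ and a term is $0$ when the word in question does not begin with the relevant letter. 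From $\langle x,x \rangle=\langle y,y \rangle=2$ and $\langle x,y \rangle=\langle y,x \rangle=-2$, counting letters gives the three weight values we will use: $\langle x,\tG_m \rangle=\langle x,G_m \rangle=0$ for all $m$, and $\langle y,W_{-k} \rangle=-2$ for all $k$.

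For the first equality in \eqref{eq:conv2.3}, set $S=\sum_{k=0}^{2n}(-1)^kW_{-k} \star G_{2n-k}$. Since $W_{-k}=x(yx)^k$ begins with $x$ and $G_{2n-k}$ begins with $y$ (or is $\m1$ when $k=2n$), applying $x^{-1}$ kills the $G$-side, so $x^{-1}S=\sum_{k=0}^{2n}(-1)^kG_k \star G_{2n-k}=(-1)^n[2]_q^{2n}(yyxx)^n$ by \eqref{eq:conv1.2}; applying $y^{-1}$ kills the $W$-side and, using $\langle y,W_{-k} \rangle=-2$ and $y^{-1}G_{2n-k}=W_{k-2n+1}$, gives $y^{-1}S=q^{-2}\sum_{k=0}^{2n-1}(-1)^kW_{-k} \star W_{k-2n+1}=0$ by \eqref{eq:conv1.3odd} (applied with $n$ replaced by $n-1$; the case $n=0$ is trivial). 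Since $x^{-1}\bigl((-1)^n[2]_q^{2n}x(yyxx)^n\bigr)=(-1)^n[2]_q^{2n}(yyxx)^n$ and $y^{-1}\bigl((-1)^n[2]_q^{2n}x(yyxx)^n\bigr)=0$, Lemma \ref{lem:eqcond} yields the claim. The second equality in \eqref{eq:conv2.3} is identical after interchanging the two factors (equivalently, using the analogous rule for right derivatives) and reindexing; one lands on \eqref{eq:conv1.2} and \eqref{eq:conv1.3odd} again. For \eqref{eq:conv2.2}, with $T=\sum_{k=0}^{2n+1}(-1)^kW_{-k} \star G_{2n+1-k}$, the same steps give $x^{-1}T=\sum_{k=0}^{2n+1}(-1)^kG_k \star G_{2n+1-k}=0$ by \eqref{eq:conv1.2odd} and $y^{-1}T=q^{-2}\sum_{k=0}^{2n}(-1)^kW_{-k} \star W_{k-2n}=(-1)^nq^{-1}[2]_q^{2n+1}(xxyy)^nxx$ by \eqref{eq:conv1.3}; since $(xxyy)^nxx$ and $xx(yyxx)^n$ are the same word, multiplying by $q$ and invoking Lemma \ref{lem:eqcond} gives $qT=(-1)^n[2]_q^{2n+1}yxx(yyxx)^n$, and the other equality follows in the mirror fashion, the prefactors $q$ versus $q^{-1}$ accounting for the weight $q^{-2}$ that is present in one orientation but absent in the other (together with the $q$ already in \eqref{eq:conv1.3}).

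The only real work is bookkeeping: tracking the weight exponents $q^{\langle\,\cdot\,,\,\cdot\,\rangle}$ and the degenerate boundary terms ($k=2n$ in \eqref{eq:conv2.3}, $k=2n+1$ in \eqref{eq:conv2.2}, where one alternating word collapses to $\m1$), and reindexing the convolution sums so they align exactly with the statements of Propositions \ref{prop:conv1} and \ref{prop:conv1odd}; I expect this to be routine, just as in the proof of Proposition \ref{prop:W-*tG}, and the $n=0$ cases can be checked by hand as a sanity test. As an alternative route, the whole proposition follows from Proposition \ref{prop:W-*tG} by applying the reversal anti-automorphism $a_1 \cdots a_n \mapsto a_n \cdots a_1$ of $(\mV,\star)$: one first verifies, by induction on word length from the recursive definition of $\star$, that this map reverses $\star$-products, and then observes that it fixes each $W_{-k}$ while sending $\tG_m \mapsto G_m$, $(xxyy)^n \mapsto (yyxx)^n$, $(xxyy)^nx \mapsto x(yyxx)^n$, and $(xxyy)^nxxy \mapsto yxx(yyxx)^n$, so that \eqref{eq:conv3.3} and \eqref{eq:conv3.1} transform term by term into \eqref{eq:conv2.3} and \eqref{eq:conv2.2}.
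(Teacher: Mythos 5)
Your proposal is correct and follows essentially the same route as the paper, which proves this proposition exactly as it proves Proposition \ref{prop:W-*tG}: apply the derivative operators of Definition \ref{def:derivatives}, reduce via Lemma \ref{lem:eqcond} to the convolution identities \eqref{eq:conv1.2}, \eqref{eq:conv1.3}, \eqref{eq:conv1.2odd}, \eqref{eq:conv1.3odd}, and track the weight factors. The only cosmetic difference is that you use the left derivatives $x^{-1}(\cdot)$, $y^{-1}(\cdot)$ where the paper's model proof uses the right ones, which is immaterial since Lemma \ref{lem:eqcond} covers both.
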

\begin{proof}
Similar to the proof of Proposition \ref{prop:W-*tG}. 
\end{proof}

\begin{proposition}\rm\label{prop:W+*tG}
For $n \in \mN$, 

\begin{equation}\label{eq:conv2.4}
\begin{split}
\sum_{k=0}^{2n}(-1)^kW_{k+1} \star \tG_{2n-k}&=(-1)^n[2]_q^{2n}y(xxyy)^n\\
&=\sum_{k=0}^{2n}(-1)^k\tG_{2n-k} \star W_{k+1}, 
\end{split}
\end{equation}
\begin{equation}\label{eq:conv2.1}
\begin{split}
q\sum_{k=0}^{2n+1}(-1)^kW_{k+1} \star \tG_{2n+1-k}&=(-1)^n[2]_q^{2n+1}xyy(xxyy)^n\\
&=q^{-1}\sum_{k=0}^{2n+1}(-1)^k\tG_{2n+1-k} \star W_{k+1}. 
\end{split}
\end{equation}
\end{proposition}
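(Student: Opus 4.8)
The plan is to mimic the strategy of Proposition \ref{prop:W-*tG}: reduce each displayed identity to statements already proved by applying the one-sided derivative operators $x^{-1}$ and $y^{-1}$ and invoking Lemma \ref{lem:eqcond}. I will treat \eqref{eq:conv2.4} and \eqref{eq:conv2.1} in parallel, handling the two expressions (the one with $W$'s on the left and the one with $W$'s on the right) separately.

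First I would establish \eqref{eq:conv2.4}. Apply $x^{-1}$ to $\sum_{k=0}^{2n}(-1)^kW_{k+1} \star \tG_{2n-k}$. Since $W_{k+1}=(yx)^ky$ begins with $y$ when $k\geq 1$ and $W_1=y$ begins with $y$, while $\tG_{2n-k}$ begins with $x$ (or is $\m1$ when $k=2n$), the shuffle product $W_{k+1}\star\tG_{2n-k}$ has no word beginning with $x$ in its expansion unless the leading $x$ comes from $\tG$; one checks that $x^{-1}(W_{k+1}\star\tG_{2n-k})$ rewrites in terms of products of $W$'s and $\tG$'s of lower total degree, and the resulting sum vanishes by \eqref{eq:conv1.3odd} (after reindexing, using $W_{-0}=x$, $x^{-1}\tG_{2n-k}=W_{2n-k}\cdots$ type manipulations). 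Dually, applying $y^{-1}$ produces (up to the $q$-power bookkeeping coming from $\langle y,x\rangle=\langle y,y\rangle$ in the shuffle recursion) the sum $\sum_{k=0}^{2n-1}(-1)^kW_{k+1}\star\tG_{2n-1-k}$ plus a $\tG\star\tG$ term; the $\tG\star\tG$ piece is $(-1)^n[2]_q^{2n}(xxyy)^{n-1}\cdots$ by \eqref{eq:conv1.1} and the $W\star\tG$ piece is $0$ by \eqref{eq:conv1.1odd}. Matching both derivatives against those of $(-1)^n[2]_q^{2n}y(xxyy)^n$ and applying Lemma \ref{lem:eqcond} gives the first equality; the second equality (with $\tG_{2n-k}\star W_{k+1}$) follows by the same computation with the left/right forms of the shuffle recursion, or by noting that the $\tG_k$ mutually commute (\cite[Proposition 5.10]{ter_alternating}) and that $W$'s shuffle against $\tG$'s in a controlled way.

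For \eqref{eq:conv2.1} I would run the identical argument one degree higher: apply $x^{-1}$ to $\sum_{k=0}^{2n+1}(-1)^kW_{k+1}\star\tG_{2n+1-k}$ and recognize the result, after reindexing, as $q^{\pm 2}$-twisted copies of $\sum(-1)^k\tG_k\star\tG_{2n+1-k}=0$ from \eqref{eq:conv1.1odd} together with a $W\star W$ term that evaluates via \eqref{eq:conv1.4}; apply $y^{-1}$ and recognize $\sum(-1)^kW_{k+1}\star\tG_{2n-k}=(-1)^n[2]_q^{2n}y(xxyy)^n$ from the already-proved \eqref{eq:conv2.4}. The scalars $q$ and $q^{-1}$ on the two sides are exactly what is needed to make the derivatives of the left and right members of \eqref{eq:conv2.1} agree with those of $(-1)^n[2]_q^{2n+1}xyy(xxyy)^n$; tracking these $q$-powers carefully through the shuffle recursion $u\star v = u_1((u_2\cdots)\star v)+v_1(u\star(v_2\cdots))q^{\langle v_1,u_1\rangle+\cdots}$ is where the bookkeeping is heaviest.

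The main obstacle I anticipate is purely the $q$-power accounting when commuting $x^{-1}$ or $y^{-1}$ past a shuffle product: the derivative of $u\star v$ is not simply $(x^{-1}u)\star v + \text{(something)}$, because when the leading letter is extracted from the second factor it carries the weight $q^{\langle v_1,u_1\rangle+\cdots+\langle v_1,u_r\rangle}$, and for a word of alternating $x,y$'s of a given length this exponent is a specific power of $q$ (here $q^{\pm 2}$ per the $xxyy$ pattern) that must be computed exactly to match the stated coefficients $q^{-1},q$ and the $[2]_q$ powers. Once that combinatorial lemma on $x^{-1}(u\star v)$ and $y^{-1}(u\star v)$ for these particular words is in hand, the rest is a short induction-free deduction from Propositions \ref{prop:conv1}, \ref{prop:conv1odd}, and \ref{prop:W+*tG}'s first identity.
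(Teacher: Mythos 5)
Your overall strategy --- apply the one-sided derivative operators of Definition \ref{def:derivatives}, reduce to Propositions \ref{prop:conv1} and \ref{prop:conv1odd}, and conclude with Lemma \ref{lem:eqcond} --- is exactly the paper's (the paper proves Proposition \ref{prop:W-*tG} this way and declares the present proposition ``similar''). However, several of the concrete reductions you describe are not the ones that actually occur, and as written they would not close. For \eqref{eq:conv2.4}: since $W_{k+1}$ begins with $y$ and $\tG_{2n-k}$ begins with $x$, the left derivative satisfies $y^{-1}(W_{k+1}\star\tG_{2n-k})=\tG_k\star\tG_{2n-k}$ exactly --- no $q$-factor and no leftover $W\star\tG$ term --- so the $y^{-1}$-image of the whole sum is the full convolution \eqref{eq:conv1.1}, equal to $(-1)^n[2]_q^{2n}(xxyy)^n=y^{-1}\big(y(xxyy)^n\big)$. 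Your claim that it splits as a lower-degree $W\star\tG$ sum plus a $\tG\star\tG$ piece is not right, and your parenthetical $\langle y,x\rangle=\langle y,y\rangle$ is false (these are $-2$ and $2$). Likewise $x^{-1}(W_{k+1}\star\tG_{2n-k})=q^{-2}\,W_{k+1}\star W_{2n-k}$, a convolution of \emph{positive}-index $W$'s, which vanishes by \eqref{eq:conv1.4odd} (with $n$ replaced by $n-1$), not by \eqref{eq:conv1.3odd}, which concerns the nonpositive-index family $W_{-k}$.

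For \eqref{eq:conv2.1} the roles of the two derivatives are swapped in your sketch. The target word $xyy(xxyy)^n$ satisfies $y^{-1}\big(xyy(xxyy)^n\big)=0$, so the $y^{-1}$-derivative of $q\sum_{k}(-1)^kW_{k+1}\star\tG_{2n+1-k}$ must vanish --- and it does, being $q$ times the odd $\tG\star\tG$ convolution \eqref{eq:conv1.1odd} --- so it cannot be matched against the nonzero quantity $(-1)^n[2]_q^{2n}y(xxyy)^n$ from \eqref{eq:conv2.4} as you propose. It is the $x^{-1}$-derivative that carries the content: $x^{-1}\big(xyy(xxyy)^n\big)=(yyxx)^nyy$, while $x^{-1}(W_{k+1}\star\tG_{2n+1-k})=q^{-2}W_{k+1}\star W_{2n+1-k}$, and \eqref{eq:conv1.4} supplies exactly the factor $q[2]_q^{2n+1}$ that, combined with the prefactors $q$ and $q^{-1}$ on the two sides of \eqref{eq:conv2.1}, makes the derivatives agree. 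So the method is sound and the needed ingredients are all ones you name (\eqref{eq:conv1.1}, \eqref{eq:conv1.1odd}, \eqref{eq:conv1.4}, \eqref{eq:conv1.4odd}), but the derivative computations must be redone as above before the argument is a proof.
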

\begin{proof}
Similar to the proof of Proposition \ref{prop:W-*tG}. 
\end{proof}

\begin{proposition}\rm\label{prop:W+*G}
For $n \in \mN$, 

\begin{equation}\label{eq:conv3.4}
\begin{split}
\sum_{k=0}^{2n}(-1)^kW_{k+1} \star G_{2n-k}&=(-1)^n[2]_q^{2n}(yyxx)^ny\\
&=\sum_{k=0}^{2n}(-1)^kG_{2n-k} \star W_{k+1}, 
\end{split}
\end{equation}
\begin{equation}\label{eq:conv3.2}
\begin{split}
q^{-1}\sum_{k=0}^{2n+1}(-1)^kW_{k+1} \star G_{2n+1-k}&=(-1)^n[2]_q^{2n+1}(yyxx)^nyyx\\
&=q\sum_{k=0}^{2n+1}(-1)^kG_{2n+1-k} \star W_{k+1}. 
\end{split}
\end{equation}
\end{proposition}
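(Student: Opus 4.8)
The plan is to prove Proposition \ref{prop:W+*G} by the same divided-difference strategy used in Proposition \ref{prop:W-*tG}, reducing each of the two displayed identities to already-established facts via Lemma \ref{lem:eqcond}. For the first identity \eqref{eq:conv3.4}, set $S=\sum_{k=0}^{2n}(-1)^kW_{k+1}\star G_{2n-k}$ and compute $Sx^{-1}$ and $Sy^{-1}$. Since $G_{2n-k}=(yx)^{2n-k}$ ends in $x$ (for $2n-k\geq 1$) and $G_0=\m1$, applying $x^{-1}$ on the right to $W_{k+1}\star G_{2n-k}$ should, after tracking the recursive formula for $\star$, collapse $G_{2n-k}x^{-1}=(yx)^{2n-k-1}y=W_{2n-k}$ and leave $W_{k+1}\star W_{2n-k}$; thus $Sx^{-1}$ becomes $\sum_{k=0}^{2n-1}(-1)^kW_{k+1}\star W_{2n-k}$ (the $k=2n$ term drops since $G_0x^{-1}=0$), which vanishes by \eqref{eq:conv1.4odd}. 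Meanwhile $Sy^{-1}$ should pick out the contribution where the leading $y$ of $W_{k+1}=(yx)^ky$ or of $G_{2n-k}=(yx)^{2n-k}$ is removed; the key point is that $W_{k+1}y^{-1}=(xy)^{k}\cdot$ wait --- more carefully, in the $q$-shuffle product $u\star v$ the leftmost letter comes from either $u_1$ or $v_1$, so $(W_{k+1}\star G_{2n-k})y^{-1}=(W_{k+1}y^{-1})\star G_{2n-k}+(\text{coefficient})\,W_{k+1}\star(G_{2n-k}y^{-1})$, and since both $W_{k+1}$ and $G_{2n-k}$ begin with $y$, this rewrites as a combination of $G_k\star G_{2n-k}$ type terms (using $W_{k+1}y^{-1}=G_k$ and $G_{2n-k}y^{-1}=G_{2n-k-1}$, up to the $q^{\langle\cdot\rangle}$ weights) whose alternating sum should equal $(-1)^n[2]_q^{2n}(yyxx)^ny\cdot y^{-1}=(-1)^n[2]_q^{2n}(yyxx)^n$; the latter is exactly \eqref{eq:conv1.2}. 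Hence Lemma \ref{lem:eqcond} gives the first equality, and the second equality (the reversed-order sum) follows by the symmetric computation using the right-handed recursion for $\star$, or alternatively by applying the algebra anti-automorphism that fixes $x,y$ and reverses words.

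For the second identity \eqref{eq:conv3.2}, set $T=\sum_{k=0}^{2n+1}(-1)^kW_{k+1}\star G_{2n+1-k}$, so the claim is $q^{-1}T=(-1)^n[2]_q^{2n+1}(yyxx)^nyyx$ and $qT'=$ the same, where $T'$ is the order-reversed sum. Again I would compute $(q^{-1}T)x^{-1}$ and $(q^{-1}T)y^{-1}$. Applying $x^{-1}$ on the right: the target $(yyxx)^nyyx\cdot x^{-1}=(yyxx)^nyy$, and on the sum side the $G_{2n+1-k}$ (which ends in $x$ for $2n+1-k\geq 1$, i.e.\ all $k\leq 2n$) contributes $W_{k+1}\star W_{2n+1-k}$, giving after the $q^{-1}$ normalization exactly the left side of \eqref{eq:conv1.4}, namely $\sum_{k=0}^{2n}(-1)^kW_{k+1}\star W_{2n+1-k}=(-1)^nq[2]_q^{2n+1}(yyxx)^nyy$, so the $x^{-1}$-derivative matches after dividing by the appropriate power of $q$. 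Applying $y^{-1}$ on the right: $(yyxx)^nyyx\cdot y^{-1}=0$, and on the sum side, since each $G_{2n+1-k}$ ends in $x$ (never in $y$) when nonempty and $G_0=\m1$ has $G_0y^{-1}=0$, while the $W_{k+1}=(yx)^ky$ ends in $y$ with $W_{k+1}y^{-1}=(yx)^k=G_k$, the right-handed recursion for $\star$ yields $(W_{k+1}\star G_{2n+1-k})y^{-1}=G_k\star G_{2n+1-k}$ (no second term, since $G_{2n+1-k}$ does not end in $y$), so the alternating sum is $\sum_{k=0}^{2n+1}(-1)^kG_k\star G_{2n+1-k}=0$ by \eqref{eq:conv1.2odd}. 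Lemma \ref{lem:eqcond} then gives $q^{-1}T=(-1)^n[2]_q^{2n+1}(yyxx)^nyyx$; the reversed-sum equality follows analogously from the left-handed recursion, or from the word-reversal anti-automorphism together with the observation that reversing $(yyxx)^nyyx$ and relabelling produces the companion statement.

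I expect the main obstacle to be bookkeeping the powers of $q$ that appear from the weight exponents $\langle u,v\rangle$ in the recursive definition of $\star$ when one applies the one-sided derivatives $x^{-1},y^{-1}$ to a product $u\star v$. Specifically, the identity $(u\star v)x^{-1}$ is not simply $u\star(vx^{-1})$; rather, from the right-handed recursion $u\star v=(u\star(v_1\cdots v_{s-1}))v_s+((u_1\cdots u_{r-1})\star v)u_r q^{\langle u_r,v_1\rangle+\cdots+\langle u_r,v_s\rangle}$, one reads off $(u\star v)x^{-1}$ only after checking which of $u_r,v_s$ equals $x$; for the words in play here ($W_{k+1}=(yx)^ky$ ends in $y$; $G_{2n-k}=(yx)^{2n-k}$ ends in $x$), exactly one of the two terms survives and the $q$-weight is $\langle y,y\rangle+\langle y,y\rangle+\langle y,x\rangle+\cdots$ summed over the letters of $W_{k+1}$, which must be computed to confirm the clean constants $q^{\pm1}$ asserted in the proposition. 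A careful but routine case analysis of these four one-sided derivatives, combined with the inductive outputs \eqref{eq:conv1.2}, \eqref{eq:conv1.4}, \eqref{eq:conv1.2odd}, \eqref{eq:conv1.4odd}, delivers the result; the structure is entirely parallel to Proposition \ref{prop:W-*tG}, so no new idea is needed beyond disciplined tracking of these exponents.
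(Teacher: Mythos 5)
Your proposal is correct and follows essentially the same route as the paper: the paper proves this proposition exactly as it proves Proposition \ref{prop:W-*tG}, namely by applying the right-sided derivatives $x^{-1}$, $y^{-1}$ to reduce each identity to \eqref{eq:conv1.2}, \eqref{eq:conv1.4}, \eqref{eq:conv1.2odd}, \eqref{eq:conv1.4odd} and then invoking Lemma \ref{lem:eqcond}. (One small slip to clean up: in your first paragraph you briefly conflate left and right derivatives and write $G_{2n-k}y^{-1}=G_{2n-k-1}$, whereas the right derivative gives $G_{2n-k}y^{-1}=0$ since $G_{2n-k}$ ends in $x$ --- which is precisely why only the $G_k \star G_{2n-k}$ terms survive, as your final reduction correctly assumes.)
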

\begin{proof}
Similar to the proof of Proposition \ref{prop:W-*tG}. 
\end{proof}

\begin{proposition}\rm\label{prop:G*tG}
For $n \in \mN$, 

\begin{equation}\label{eq:conv4.7}
\sum_{k=0}^{2n+2}(-1)^kG_k \star \tG_{2n+2-k}=(-1)^{n+1}[2]_q^{2n+1}\big(q^{-1}xyy(xxyy)^nx+qy(xxyy)^nxxy\big), 
\end{equation}
\begin{equation}\label{eq:conv4.8}
\sum_{k=0}^{2n+2}(-1)^k\tG_{2n+2-k} \star G_k=(-1)^{n+1}[2]_q^{2n+1}\big(qxyy(xxyy)^nx+q^{-1}y(xxyy)^nxxy\big), 
\end{equation}
\begin{equation}\label{eq:conv4.1}
\sum_{k=0}^{2n+1}(-1)^kG_k \star \tG_{2n+1-k}=(-1)^n[2]_q^{2n}\big(x(yyxx)^ny-y(xxyy)^nx\big), 
\end{equation}
\begin{equation}\label{eq:conv4.2}
\sum_{k=0}^{2n+1}(-1)^k\tG_{2n+1-k} \star G_k=(-1)^n[2]_q^{2n}\big(x(yyxx)^ny-y(xxyy)^nx\big). 
\end{equation}
\end{proposition}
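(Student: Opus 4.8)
**Proof proposal for Proposition \ref{prop:G*tG}.**

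The plan is to prove all four identities by the same ``apply the derivative operators'' strategy that was used in Propositions \ref{prop:W-*tG}--\ref{prop:W+*G}, reducing each statement to identities already established. Fix $n \in \mN$ and set $v = \sum_{k=0}^{2n+2}(-1)^k G_k \star \tG_{2n+2-k}$. By Lemma \ref{lem:eqcond} it suffices to compute $x^{-1}v$ and $y^{-1}v$. Since $G_k = (yx)^k$ begins with $y$ for $k \geq 1$ while $G_0 = \m1$, and $\tG_{2n+2-k}$ begins with $x$ when $2n+2-k \geq 1$, we first need formulas for $x^{-1}(G_k \star \tG_m)$ and $y^{-1}(G_k \star \tG_m)$. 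The left-most letter of a $q$-shuffle product $u \star w$ is governed by the recursion in Section 2 (the fourth bullet): $u \star w = u_1((u_2\cdots u_r)\star w) + w_1(u \star (w_2\cdots w_s))q^{\langle w_1,u_1\rangle+\cdots}$. Applying $x^{-1}$ kills the first summand (whose leading letter is $y$) and keeps the second with the appropriate power of $q$; applying $y^{-1}$ does the reverse. Carrying this out on $v$, the $x^{-1}$-image should collapse — after re-indexing — into a combination of sums of the form $\sum (-1)^k \tG_k \star \tG_{2n+2-k}$ and $\sum (-1)^k G_k \star W_\bullet$ type expressions, which are evaluated by Proposition \ref{prop:conv1} (specifically \eqref{eq:conv1.1}) together with the odd-length vanishing of Proposition \ref{prop:conv1odd}; symmetrically, the $y^{-1}$-image will reduce via the $W$-convolution identities of Propositions \ref{prop:conv1}--\ref{prop:W+*G}. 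Matching $x^{-1}v$ and $y^{-1}v$ against the derivatives of the claimed right-hand side $(-1)^{n+1}[2]_q^{2n+1}(q^{-1}xyy(xxyy)^nx + qy(xxyy)^nxxy)$ — whose $x^{-1}$-image is $(-1)^{n+1}[2]_q^{2n+1}q^{-1}yy(xxyy)^nx$ and whose $y^{-1}$-image is, after noting the second word begins with $y$, a multiple of $(xxyy)^nxxy$ — then finishes \eqref{eq:conv4.7}. Identity \eqref{eq:conv4.8} is handled the same way using the right-hand derivative operators $(~)x^{-1}$, $(~)y^{-1}$ and the ``reversed'' (second) equalities in Propositions \ref{prop:W-*tG}--\ref{prop:W+*G}; alternatively it follows from \eqref{eq:conv4.7} by the anti-automorphism of $\mV$ reversing words, under which $\star$ is preserved, $G_k \leftrightarrow \tG_k$ up to the evident relabeling, and the two terms on the right swap their $q^{\pm 1}$ coefficients.

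For the odd identities \eqref{eq:conv4.1} and \eqref{eq:conv4.2}, I would argue similarly: set $v' = \sum_{k=0}^{2n+1}(-1)^k G_k \star \tG_{2n+1-k}$, apply $x^{-1}$ and $y^{-1}$, and reduce. The $x^{-1}$-image should split into $\sum(-1)^k \tG_k \star \tG_{2n+1-k}$, which vanishes by \eqref{eq:conv1.1odd}, plus a $W$-type tail that evaluates through \eqref{eq:conv2.4} or \eqref{eq:conv2.1} to produce the $y(xxyy)^nx$ contribution; the $y^{-1}$-image produces the $x(yyxx)^ny$ contribution via \eqref{eq:conv2.3}/\eqref{eq:conv2.2} (note $x^{-1}(x(yyxx)^ny) = (yyxx)^ny$ and $y^{-1}(y(xxyy)^nx) = (xxyy)^nx$, so both words are ``seen'' by exactly one operator, which pins down the signs and powers cleanly). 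That \eqref{eq:conv4.1} and \eqref{eq:conv4.2} have the \emph{same} right-hand side — unlike the even case — is the statement that $\sum_{k=0}^{2n+1}(-1)^k[G_k,\tG_{2n+1-k}]_\star = 0$, i.e. the odd antisymmetric part of the $G$-$\tG$ convolution vanishes; this should fall out of the derivative computation once one observes the $q$-powers introduced by the shuffle recursion cancel in the odd-length case, or can be extracted directly from the known commutator relations between the alternating words recalled in Appendix A.

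The main obstacle I anticipate is bookkeeping rather than conceptual: correctly tracking the powers of $q$ that appear when $x^{-1}$ or $y^{-1}$ is pushed past a $q$-shuffle product, and getting the index shifts right when the surviving terms are reassembled into the convolution sums of Proposition \ref{prop:conv1}. In particular one must be careful that the boundary terms ($k=0$, where $G_0 = \m1$, and $k = 2n+2$ or $2n+1$, where $\tG_0 = \m1$) are included or excluded consistently, since these are exactly the terms whose leading letter is ambiguous. Once the $q$-power accounting is pinned down — most safely by checking the $n=0$ case by hand against \eqref{eq:conv4.7}--\eqref{eq:conv4.2}, where $v$ is a short explicit sum — the rest is a routine application of Lemma \ref{lem:eqcond} and the earlier propositions.
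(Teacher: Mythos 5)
Your proposal is correct and follows essentially the same route as the paper, whose proof of this proposition is exactly the derivative-operator argument modeled on Proposition \ref{prop:W-*tG}: apply $x^{-1}$ and $y^{-1}$ (or their right-hand analogues), reduce to the convolution identities already established, and invoke Lemma \ref{lem:eqcond}. The only imprecision is in naming which earlier identities appear after the reduction — for instance $x^{-1}\sum_k(-1)^kG_k\star\tG_{2n+2-k}$ collapses directly (after re-indexing) to the $G\star W_{+}$ convolution of Proposition \ref{prop:W+*G}, and $y^{-1}$ of it to the $W_{-}\star\tG$ convolution of Proposition \ref{prop:W-*tG}, rather than to a mixture involving $\tG\star\tG$ sums — but this does not affect the validity of the method.
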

\begin{proof}
Similar to the proof of Proposition \ref{prop:W-*tG}. 
\end{proof}

\begin{proposition}\rm\label{prop:W+*W-}
For $n \in \mN$, 

\begin{equation}\label{eq:conv4.3}
\sum_{k=0}^{2n}(-1)^kW_{k+1} \star W_{k-2n}=(-1)^n[2]_q^{2n}\big(q^{-2}x(yyxx)^ny+y(xxyy)^nx\big), 
\end{equation}
\begin{equation}\label{eq:conv4.4}
\sum_{k=0}^{2n}(-1)^kW_{k-2n} \star W_{k+1}=(-1)^n[2]_q^{2n}\big(x(yyxx)^ny+q^{-2}y(xxyy)^nx\big), 
\end{equation}
\begin{equation}\label{eq:conv4.5}
\sum_{k=0}^{2n+1}(-1)^kW_{k+1} \star W_{k-2n-1}=(-1)^nq^{-1}[2]_q^{2n+1}\big(xyy(xxyy)^nx-yxx(yyxx)^ny\big), 
\end{equation}
\begin{equation}\label{eq:conv4.6}
\sum_{k=0}^{2n+1}(-1)^kW_{k-2n-1} \star W_{k+1}=(-1)^nq^{-1}[2]_q^{2n+1}\big(xyy(xxyy)^nx-yxx(yyxx)^ny\big). 
\end{equation}
\end{proposition}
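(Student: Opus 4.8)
The plan is to reuse the template of the proof of Proposition~\ref{prop:W-*tG}: for each of \eqref{eq:conv4.3}--\eqref{eq:conv4.6} I would apply the right-deletion operators $(\cdot)x^{-1}$ and $(\cdot)y^{-1}$ to both sides and then conclude with Lemma~\ref{lem:eqcond}. The computational engine is the right-hand Leibniz rule for $\star$, which drops out of the recursion in Section~2: for words $u=u_1\cdots u_r$ and $v=v_1\cdots v_s$,
\begin{equation*}
(u\star v)x^{-1}=u\star(vx^{-1})+\big((ux^{-1})\star v\big)\,q^{\langle x,v_1\rangle+\cdots+\langle x,v_s\rangle},
\end{equation*}
and likewise with $y$ in place of $x$; recall that $vx^{-1}=0$ unless $v$ ends in $x$, so on the right only one term survives as soon as $u$ and $v$ end in different letters.

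Each left-hand side of \eqref{eq:conv4.3}--\eqref{eq:conv4.6} is an alternating sum of products $W_{k+1}\star W_{-m}$ or $W_{-m}\star W_{k+1}$, and every such $W_{k+1}$ ($k\geq 0$) ends in $y$ while every $W_{-m}$ ($m\geq 0$) ends in $x$; hence exactly one term of the Leibniz rule survives in each product. Using $W_{k+1}x^{-1}=0$, $W_{k+1}y^{-1}=G_k$, $W_{-m}x^{-1}=\tG_m$, $W_{-m}y^{-1}=0$ (with $G_0=\tG_0=\m1$), and noting that the power of $q$ attached to the surviving term is either absent or equal to $q^{-2}$ (a short count using $\langle x,x\rangle=2$, $\langle x,y\rangle=-2$ and the fact that an alternating word of the form $W_m$ has its two letter-counts differing by one), I would rewrite each of the two derivatives of a left-hand side, after a shift of the summation index, as one of the sums already evaluated in Propositions~\ref{prop:W-*G} and \ref{prop:W+*tG}: for \eqref{eq:conv4.3} and \eqref{eq:conv4.4} the even-length identities \eqref{eq:conv2.3} and \eqref{eq:conv2.4} (both the ``$W\star G$'' and the ``$G\star W$'' forms displayed there are needed), and for \eqref{eq:conv4.5} and \eqref{eq:conv4.6}, which share the same right-hand side, the odd-length identities \eqref{eq:conv2.1} and \eqref{eq:conv2.2}. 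On the right-hand sides the operators $(\cdot)x^{-1}$ and $(\cdot)y^{-1}$ simply annihilate whichever doubly alternating word does not end in the relevant letter and delete the last letter of the other, and the two resulting expressions then visibly match the reduced left-hand sides.

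I expect the only genuine difficulty to be the bookkeeping. In the odd cases the reindexing $k\mapsto 2n+1-k$ introduces a sign $(-1)^{2n+1}=-1$ that must be carried correctly, and one has to keep the factor $q^{-2}$ coming from the Leibniz rule consistent with the factors $q^{\pm 1}$ appearing in Propositions~\ref{prop:W-*G} and \ref{prop:W+*tG}, and pair each collapsed sum with the correct one of the two forms displayed there. Beyond this I anticipate no obstacle: the argument runs exactly on the pattern already used for Proposition~\ref{prop:W-*tG}.
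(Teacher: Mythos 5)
Your proposal is correct and is essentially the paper's own argument: the paper proves this proposition exactly as it proves Proposition \ref{prop:W-*tG}, namely by applying the right-deletion operators $(\cdot)x^{-1}$ and $(\cdot)y^{-1}$, collapsing each product via the one surviving term of the Leibniz rule (with the $q^{-2}$ factor you identify), matching the resulting sums against Propositions \ref{prop:W-*G} and \ref{prop:W+*tG} after reindexing, and concluding with Lemma \ref{lem:eqcond}. Your bookkeeping (the sign from $k\mapsto 2n+1-k$ in the odd case and the pairing with the correct $W\star G$ versus $G\star W$ form) checks out.
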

\begin{proof}
Similar to the proof of Proposition \ref{prop:W-*tG}. 
\end{proof}

\noindent Note that \eqref{eq:conv1.1}--\eqref{eq:conv1.4} and \eqref{eq:conv3.3}--\eqref{eq:conv3.2} express the doubly alternating words listed in \eqref{eq:dalternating1}--\eqref{eq:dalternating3} as polynomials in the alternating words. Using \eqref{eq:conv4.7}--\eqref{eq:conv4.6}, next we express the doubly alternating words listed in \eqref{eq:dalternating4} as polynomials in the alternating words. 

\begin{corollary}\rm\label{cor:solve4.1&4.2}
Let $n \in \mN$. We have 

\begin{equation}\label{eq:solve4.1}
(-1)^n[2]_q^{2n+1}x(yyxx)^ny=(1-q^{-2})^{-1}\sum_{k=0}^{2n}(-1)^k[W_{k-2n},W_{k+1}]_q, 
\end{equation}
\begin{equation}\label{eq:solve4.2}
(-1)^n[2]_q^{2n+1}y(xxyy)^nx=(1-q^{-2})^{-1}\sum_{k=0}^{2n}(-1)^k[W_{k+1},W_{k-2n}]_q, 
\end{equation}
\begin{equation}\label{eq:solve4.1&4.2}
\begin{split}
\sum_{k=0}^{2n+1}(-1)^kG_k \star \tG_{2n+1-k}&=(1-q^{-2})^{-1}\sum_{k=0}^{2n}(-1)^k[W_{k-2n},W_{k+1}]\\
&=\sum_{k=0}^{2n+1}(-1)^k\tG_{2n+1-k} \star G_k. 
\end{split}
\end{equation}
\end{corollary}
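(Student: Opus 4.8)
The plan is to derive all three displays from Propositions \ref{prop:W+*W-} and \ref{prop:G*tG} by pure linear algebra, the only nontrivial scalar input being the elementary identity $q-q^{-3}=(q+q^{-1})(1-q^{-2})=[2]_q(1-q^{-2})$.

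First I would prove \eqref{eq:solve4.1}. Expanding the $q$-commutator as $[W_{k-2n},W_{k+1}]_q=qW_{k-2n}\star W_{k+1}-q^{-1}W_{k+1}\star W_{k-2n}$ gives
\[
\sum_{k=0}^{2n}(-1)^k[W_{k-2n},W_{k+1}]_q=q\sum_{k=0}^{2n}(-1)^kW_{k-2n}\star W_{k+1}-q^{-1}\sum_{k=0}^{2n}(-1)^kW_{k+1}\star W_{k-2n}.
\]
Now substitute the closed forms \eqref{eq:conv4.4} and \eqref{eq:conv4.3}. The coefficient of $y(xxyy)^nx$ collapses to $q\cdot q^{-2}-q^{-1}=0$, while the coefficient of $x(yyxx)^ny$ is $q-q^{-3}$, so the sum equals $(-1)^n[2]_q^{2n}(q-q^{-3})\,x(yyxx)^ny$. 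Multiplying by $(1-q^{-2})^{-1}$ and using $q-q^{-3}=[2]_q(1-q^{-2})$ turns this into $(-1)^n[2]_q^{2n+1}x(yyxx)^ny$, which is \eqref{eq:solve4.1}. The relation \eqref{eq:solve4.2} follows by the mirror computation: expand $[W_{k+1},W_{k-2n}]_q=qW_{k+1}\star W_{k-2n}-q^{-1}W_{k-2n}\star W_{k+1}$, substitute \eqref{eq:conv4.3} and \eqref{eq:conv4.4}, and observe that now it is the coefficient of $x(yyxx)^ny$ that vanishes while the coefficient of $y(xxyy)^nx$ equals $q-q^{-3}$.

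For \eqref{eq:solve4.1&4.2} I would run the same computation with the ordinary commutator $[W_{k-2n},W_{k+1}]=W_{k-2n}\star W_{k+1}-W_{k+1}\star W_{k-2n}$ in place of the $q$-commutator. Substituting \eqref{eq:conv4.4} and \eqref{eq:conv4.3} yields exactly $(-1)^n[2]_q^{2n}(1-q^{-2})\big(x(yyxx)^ny-y(xxyy)^nx\big)$, so dividing by $1-q^{-2}$ shows that the middle term of \eqref{eq:solve4.1&4.2} equals $(-1)^n[2]_q^{2n}\big(x(yyxx)^ny-y(xxyy)^nx\big)$. The two outer equalities are then immediate from Proposition \ref{prop:G*tG}: \eqref{eq:conv4.1} identifies this expression with $\sum_{k=0}^{2n+1}(-1)^kG_k \star \tG_{2n+1-k}$ and \eqref{eq:conv4.2} identifies it with $\sum_{k=0}^{2n+1}(-1)^k\tG_{2n+1-k} \star G_k$.

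I expect no real obstacle here; the argument is entirely mechanical once Propositions \ref{prop:W+*W-} and \ref{prop:G*tG} are in hand. The two points that deserve a moment's care are the bookkeeping identity $q-q^{-3}=[2]_q(1-q^{-2})$, which is precisely the mechanism producing the factor $(1-q^{-2})^{-1}$ together with the extra power of $[2]_q$, and keeping straight which of the two doubly alternating words $x(yyxx)^ny$, $y(xxyy)^nx$ survives in each of \eqref{eq:solve4.1} and \eqref{eq:solve4.2}.
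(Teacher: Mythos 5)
Your proposal is correct and follows exactly the route of the paper, whose proof of this corollary is simply the citation ``Follows from \eqref{eq:conv4.1}--\eqref{eq:conv4.4}''; you have merely written out the linear combinations and the scalar identity $q-q^{-3}=[2]_q(1-q^{-2})$ that the paper leaves implicit. All coefficients check out.
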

\begin{proof}
Follows from \eqref{eq:conv4.1}--\eqref{eq:conv4.4}. 
\end{proof}

\begin{corollary}\rm\label{cor:solve4.3&4.4}
Let $n \in \mN$. We have 

\begin{equation}\label{eq:solve4.3}
(-1)^{n+1}[2]_q^{2n+2}xyy(xxyy)^nx=(q-q^{-1})^{-1}\sum_{k=0}^{2n+2}(-1)^k[\tG_{2n+2-k},G_k]_q, 
\end{equation}
\begin{equation}\label{eq:solve4.4}
(-1)^{n+1}[2]_q^{2n+2}yxx(yyxx)^ny=(q-q^{-1})^{-1}\sum_{k=0}^{2n+2}(-1)^k[G_k,\tG_{2n+2-k}]_q. 
\end{equation}
\begin{equation}\label{eq:solve4.3&4.4}
\begin{split}
\sum_{k=0}^{2n+1}(-1)^kW_{k+1} \star W_{k-2n-1}&=(q^2-1)^{-1}\sum_{k=0}^{2n+2}(-1)^k[G_k,\tG_{2n+2-k}]\\
&=\sum_{k=0}^{2n+1}(-1)^kW_{k-2n-1} \star W_{k+1}. 
\end{split}
\end{equation}
\end{corollary}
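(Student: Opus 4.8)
The plan is to obtain all three identities as purely formal consequences of Propositions~\ref{prop:G*tG} and~\ref{prop:W+*W-}; no induction and no further computation inside $\mV$ is needed, and the argument runs along the same lines as the proof of Corollary~\ref{cor:solve4.1&4.2}. Throughout, the only scalar facts used are $[2]_q=q+q^{-1}$ and $q^2-q^{-2}=(q-q^{-1})[2]_q$, and the only combinatorial fact used is the word identity $y(xxyy)^nxxy=yxx(yyxx)^ny$, which is checked at once by writing out the two strings.

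For~\eqref{eq:solve4.3} I would expand the $q$-commutator, $[\tG_{2n+2-k},G_k]_q=q\,\tG_{2n+2-k}\star G_k-q^{-1}G_k\star\tG_{2n+2-k}$, take the alternating sum over $k$, and then substitute the right-hand sides of~\eqref{eq:conv4.8} and~\eqref{eq:conv4.7}. In the resulting linear combination the two $y(xxyy)^nxxy$ contributions cancel, while the two $xyy(xxyy)^nx$ contributions combine with total coefficient $(-1)^{n+1}[2]_q^{2n+1}(q^2-q^{-2})=(-1)^{n+1}(q-q^{-1})[2]_q^{2n+2}$; dividing by $q-q^{-1}$ gives~\eqref{eq:solve4.3}. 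Identity~\eqref{eq:solve4.4} is proved the same way after interchanging the two arguments of the $q$-commutator, i.e.\ using $[G_k,\tG_{2n+2-k}]_q=q\,G_k\star\tG_{2n+2-k}-q^{-1}\tG_{2n+2-k}\star G_k$; now it is the $xyy(xxyy)^nx$ terms that cancel and the $y(xxyy)^nxxy$ terms that survive, with the same total coefficient, and the word identity $y(xxyy)^nxxy=yxx(yyxx)^ny$ puts the result in the stated form.

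For~\eqref{eq:solve4.3&4.4}, the outer equality is immediate because~\eqref{eq:conv4.5} and~\eqref{eq:conv4.6} have identical right-hand sides. For the middle expression I would expand the ordinary commutator, $\sum_k(-1)^k[G_k,\tG_{2n+2-k}]=\sum_k(-1)^kG_k\star\tG_{2n+2-k}-\sum_k(-1)^k\tG_{2n+2-k}\star G_k$, and substitute~\eqref{eq:conv4.7} minus~\eqref{eq:conv4.8}; the difference equals $(-1)^{n+1}[2]_q^{2n+1}(q-q^{-1})\big(y(xxyy)^nxxy-xyy(xxyy)^nx\big)$, so dividing by $q^2-1$ and using $(q-q^{-1})/(q^2-1)=q^{-1}$ together with the word identity yields $(-1)^nq^{-1}[2]_q^{2n+1}\big(xyy(xxyy)^nx-yxx(yyxx)^ny\big)$, which is precisely the common right-hand side of~\eqref{eq:conv4.5} and~\eqref{eq:conv4.6}. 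Hence all three members of~\eqref{eq:solve4.3&4.4} coincide.

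Since each identity is just a substitution into the already-established convolution formulas, there is no genuine obstacle. The one point requiring care is the ordering convention inside the $q$-commutators — which of $G_k$, $\tG_{2n+2-k}$ stands on the left — because that is exactly what determines whether the $xyy(xxyy)^nx$ term or the $y(xxyy)^nxxy$ term is annihilated in the cancellation, hence whether one arrives at~\eqref{eq:solve4.3} or at~\eqref{eq:solve4.4}; and one must keep in mind that $y(xxyy)^nxxy$ and $yxx(yyxx)^ny$ denote the same word, so that the final expressions match the forms recorded earlier.
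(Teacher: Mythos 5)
Your proposal is correct and follows the paper's own route: the paper's proof of this corollary is exactly ``follows from \eqref{eq:conv4.7}, \eqref{eq:conv4.8}, \eqref{eq:conv4.5}, \eqref{eq:conv4.6},'' and you have simply written out the expansion, cancellation, and the identification $y(xxyy)^nxxy=yxx(yyxx)^ny$ that this citation leaves implicit. All coefficients check out, including $(q-q^{-1})/(q^2-1)=q^{-1}$ in the middle identity.
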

\begin{proof}
Follows from \eqref{eq:conv4.7}, \eqref{eq:conv4.8}, \eqref{eq:conv4.5}, \eqref{eq:conv4.6}. 
\end{proof}

\section{Generating functions}
In this section, we express the results from Section 5 in terms of generating functions. For the rest of this paper, let $t$ denote an indeterminate. 

\begin{proposition}\rm\label{prop:genfun1}
We have 

\begin{equation}\label{eq:genfun1.1}
\tG(-t) \star \tG(t)=\sum_{n \in \mN}(-1)^n[2]_q^{2n}(xxyy)^nt^{2n}, 
\end{equation}
\begin{equation}\label{eq:genfun1.2}
G(-t) \star G(t)=\sum_{n \in \mN}(-1)^n[2]_q^{2n}(yyxx)^nt^{2n}, 
\end{equation}
\begin{equation}\label{eq:genfun1.3}
W^-(-t) \star W^-(t)=\sum_{n \in \mN}(-1)^nq[2]_q^{2n+1}(xxyy)^nxxt^{2n}, 
\end{equation}
\begin{equation}\label{eq:genfun1.4}
W^+(-t) \star W^+(t)=\sum_{n \in \mN}(-1)^nq[2]_q^{2n+1}(yyxx)^nyyt^{2n}. 
\end{equation}
\end{proposition}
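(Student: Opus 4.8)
The plan is to carry out a straightforward coefficient comparison in formal power series, using Propositions \ref{prop:conv1} and \ref{prop:conv1odd} to evaluate the relevant sums. I would work in the ring $\mV[[t]]$ of formal power series in $t$ with coefficients in $\mV$, where the $q$-shuffle product $\star$ extends coefficientwise; since every coefficient of such a product is a finite $\mF$-linear combination of words, this extension is well defined and no convergence issue arises. Here the generating functions are
\[
\tG(t)=\sum_{n\in\mN}\tG_n t^n,\qquad G(t)=\sum_{n\in\mN}G_n t^n,\qquad W^-(t)=\sum_{n\in\mN}W_{-n}t^n,\qquad W^+(t)=\sum_{n\in\mN}W_{n+1}t^n,
\]
with $\tG_0=G_0=\m1$; replacing $t$ by $-t$ therefore attaches the sign $(-1)^n$ to the degree-$n$ term of each series.

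First I would expand the left-hand side of \eqref{eq:genfun1.1}. By bilinearity of $\star$,
\[
\tG(-t)\star\tG(t)=\sum_{m\in\mN}t^m\sum_{k=0}^{m}(-1)^k\,\tG_k\star\tG_{m-k}.
\]
Now split according to the parity of $m$. When $m=2n$ is even, the inner sum equals $(-1)^n[2]_q^{2n}(xxyy)^n$ by \eqref{eq:conv1.1}; when $m=2n+1$ is odd, the inner sum vanishes by \eqref{eq:conv1.1odd}. Substituting these values into the displayed expansion yields \eqref{eq:genfun1.1} exactly.

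The remaining three identities are obtained in the same way. For \eqref{eq:genfun1.2} one uses \eqref{eq:conv1.2} for even index and \eqref{eq:conv1.2odd} for odd index. For \eqref{eq:genfun1.3} the Cauchy product of $W^-(-t)$ and $W^-(t)$ has $t^m$-coefficient $\sum_{k+l=m}(-1)^k W_{-k}\star W_{-l}$; writing $l=m-k$ identifies the $t^{2n}$-coefficient with $\sum_{k=0}^{2n}(-1)^k W_{-k}\star W_{k-2n}$, which is evaluated by \eqref{eq:conv1.3}, while the $t^{2n+1}$-coefficient vanishes by \eqref{eq:conv1.3odd}. For \eqref{eq:genfun1.4} the $t^m$-coefficient is $\sum_{a+b=m}(-1)^a W_{a+1}\star W_{b+1}$; the $t^{2n}$-coefficient is $\sum_{a=0}^{2n}(-1)^a W_{a+1}\star W_{2n+1-a}$, evaluated by \eqref{eq:conv1.4}, and the $t^{2n+1}$-coefficient vanishes by \eqref{eq:conv1.4odd}.

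I do not expect a genuine obstacle here: the substantive content is entirely contained in Propositions \ref{prop:conv1} and \ref{prop:conv1odd}, and the present proposition is merely their repackaging as generating-function identities. The only point that needs care is the index bookkeeping, namely matching the ranges of the Cauchy-product summations on the left-hand sides with the summation ranges in \eqref{eq:conv1.1}--\eqref{eq:conv1.4} and \eqref{eq:conv1.1odd}--\eqref{eq:conv1.4odd}; this is exactly the reindexing $W_{k-2n}=W_{-(2n-k)}$ and $W_{2n+1-k}=W_{(2n-k)+1}$ used above.
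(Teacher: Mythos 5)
Your proof is correct and takes essentially the same approach as the paper, whose entire argument is the observation that the proposition is Propositions \ref{prop:conv1} and \ref{prop:conv1odd} restated in terms of generating functions. You have simply made explicit the Cauchy-product expansion and the even/odd parity split that the paper leaves implicit, and your index bookkeeping matches the summation ranges in \eqref{eq:conv1.1}--\eqref{eq:conv1.4} and \eqref{eq:conv1.1odd}--\eqref{eq:conv1.4odd} correctly.
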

\begin{proof}
This is Propositions \ref{prop:conv1} and \ref{prop:conv1odd} in terms of generating functions. 
\end{proof}

\begin{proposition}\rm\label{prop:W-*tGgenfun}
We have 

\begin{equation}\label{eq:genfun3.1}
\begin{split}
W^-(-t) \star \tG(t)&=\sum_{n \in \mN}(-1)^n[2]_q^{2n}(xxyy)^nxt^{2n}\\
&+q\sum_{n \in \mN}(-1)^n[2]_q^{2n+1}(xxyy)^nxxyt^{2n+1}, 
\end{split}
\end{equation}
\begin{equation}\label{eq:genfun3.2}
\begin{split}
\tG(t) \star W^-(-t)&=\sum_{n \in \mN}(-1)^n[2]_q^{2n}(xxyy)^nxt^{2n}\\
&+q^{-1}\sum_{n \in \mN}(-1)^n[2]_q^{2n+1}(xxyy)^nxxyt^{2n+1}. 
\end{split}
\end{equation}
\end{proposition}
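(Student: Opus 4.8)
The plan is to derive Proposition \ref{prop:W-*tGgenfun} directly from Proposition \ref{prop:W-*tG} by packaging the two equalities \eqref{eq:conv3.3} and \eqref{eq:conv3.1} into generating-function form, in complete analogy with how Proposition \ref{prop:genfun1} repackages Propositions \ref{prop:conv1} and \ref{prop:conv1odd}. First I would introduce the generating functions of the alternating words: set $\tG(t)=\sum_{k \in \mN}\tG_k t^k$ and $W^-(t)=\sum_{k \in \mN}W_{-k}t^k$ (recalling the conventions $\tG_0=\m1$ and $W_0=x$), so that these are formal power series with coefficients in $\mV$. Then the $q$-shuffle product extends coefficientwise to such series, and the Cauchy product rule gives
\begin{equation*}
W^-(-t) \star \tG(t)=\sum_{m \in \mN}\left(\sum_{k=0}^{m}(-1)^k W_{-k} \star \tG_{m-k}\right)t^m.
\end{equation*}

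Next I would split the sum over $m$ according to parity. For $m=2n$ even, the inner coefficient is exactly the left-hand side of \eqref{eq:conv3.3}, which equals $(-1)^n[2]_q^{2n}(xxyy)^n x$; for $m=2n+1$ odd, the inner coefficient is $\sum_{k=0}^{2n+1}(-1)^kW_{-k}\star\tG_{2n+1-k}$, which by the first equality of \eqref{eq:conv3.1} equals $q(-1)^n[2]_q^{2n+1}(xxyy)^nxxy$. Substituting these into the Cauchy product and collecting the even-degree and odd-degree terms separately yields precisely \eqref{eq:genfun3.1}. The identity \eqref{eq:genfun3.2} is obtained the same way, now using $\tG(t) \star W^-(-t)=\sum_{m}\left(\sum_{k=0}^{m}(-1)^k\tG_{m-k}\star W_{-k}\right)t^m$ together with the second equality of \eqref{eq:conv3.3} (for even $m$) and the second equality of \eqref{eq:conv3.1}, which supplies the factor $q^{-1}$ (for odd $m$).

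There is essentially no obstacle here: the proof is a bookkeeping exercise, converting the finite-$n$ identities of Proposition \ref{prop:W-*tG} into their generating-function avatars. The only point requiring a word of care is the convention for the $m=0$ term --- one should note that $\sum_{k=0}^{0}(-1)^kW_{-k}\star\tG_{-k}=W_0\star\m1=W_0=x=(xxyy)^0x$, so the $n=0$ instance of \eqref{eq:conv3.3} is consistent with the $\m1$-conventions adopted just before Proposition \ref{prop:conv1}. Thus I would simply write: this is Proposition \ref{prop:W-*tG} expressed in terms of generating functions, after recording the definitions $\tG(t)=\sum_{k\in\mN}\tG_k t^k$ and $W^-(t)=\sum_{k\in\mN}W_{-k}t^k$ and the coefficientwise extension of $\star$.
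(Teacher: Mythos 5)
Your proposal is correct and is exactly the paper's approach: the paper's proof consists of the single sentence that the proposition is Proposition \ref{prop:W-*tG} expressed in terms of generating functions, and your argument just spells out the Cauchy-product bookkeeping and parity split that this sentence implicitly invokes. The coefficient extraction (including the factors $q$ and $q^{-1}$ from \eqref{eq:conv3.1}) and the $m=0$ convention check are all consistent with the paper's conventions.
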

\begin{proof}
This is Propositions \ref{prop:W-*tG} in terms of generating functions. 
\end{proof}

\begin{proposition}\rm\label{prop:W-*Ggenfun}
We have 

\begin{equation}\label{eq:genfun2.3}
\begin{split}
W^-(-t) \star G(t)&=\sum_{n \in \mN}(-1)^n[2]_q^{2n}x(yyxx)^nt^{2n}\\
&+q^{-1}\sum_{n \in \mN}(-1)^n[2]_q^{2n+1}yxx(yyxx)^nt^{2n+1}, 
\end{split}
\end{equation}
\begin{equation}\label{eq:genfun2.4}
\begin{split}
G(t) \star W^-(-t)&=\sum_{n \in \mN}(-1)^n[2]_q^{2n}x(yyxx)^nt^{2n}\\
&+q\sum_{n \in \mN}(-1)^n[2]_q^{2n+1}yxx(yyxx)^nt^{2n+1}. 
\end{split}
\end{equation}
\end{proposition}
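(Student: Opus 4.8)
The plan is to recognize \eqref{eq:genfun2.3} and \eqref{eq:genfun2.4} as Proposition \ref{prop:W-*G} repackaged in terms of generating functions, in exactly the way Proposition \ref{prop:genfun1} repackages Propositions \ref{prop:conv1} and \ref{prop:conv1odd}. Following the conventions implicit in \eqref{eq:genfun1.1}--\eqref{eq:genfun3.2}, set
\begin{equation*}
W^-(t)=\sum_{n \in \mN}W_{-n}t^n, \qquad G(t)=\sum_{n \in \mN}G_nt^n,
\end{equation*}
viewed as formal power series in $t$ with coefficients in $\mV$, with $\star$ extended to such series coefficient-wise. Then $W^-(-t)=\sum_{n \in \mN}(-1)^nW_{-n}t^n$, and by bilinearity of $\star$,
\begin{equation*}
W^-(-t) \star G(t)=\sum_{m \in \mN}\left(\sum_{k=0}^{m}(-1)^kW_{-k} \star G_{m-k}\right)t^m,
\end{equation*}
so it suffices to evaluate the inner sum for each $m \in \mN$, separating the cases $m$ even and $m$ odd.

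First I would take $m=2n$: by the first equality in \eqref{eq:conv2.3}, the inner sum equals $(-1)^n[2]_q^{2n}x(yyxx)^n$, which contributes the first series on the right-hand side of \eqref{eq:genfun2.3}. Then I would take $m=2n+1$: by \eqref{eq:conv2.2}, the inner sum equals $q^{-1}(-1)^n[2]_q^{2n+1}yxx(yyxx)^n$, which contributes the second series. Summing over all $m$ yields \eqref{eq:genfun2.3}. For \eqref{eq:genfun2.4} the argument is identical after writing $G(t) \star W^-(-t)=\sum_{m \in \mN}\left(\sum_{k=0}^{m}(-1)^kG_{m-k} \star W_{-k}\right)t^m$ and invoking instead the \emph{second} equality in \eqref{eq:conv2.3} for even $m$ and the second equality in \eqref{eq:conv2.2} for odd $m$; the latter supplies the factor $q$ in place of $q^{-1}$.

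I do not expect any genuine obstacle here: the mathematical content is entirely contained in Proposition \ref{prop:W-*G}, and what remains is bookkeeping. The one point worth a little care is the arithmetic of the exponents of $q$ — the asymmetry between the $q^{-1}$ in \eqref{eq:genfun2.3} and the $q$ in \eqref{eq:genfun2.4} comes precisely from reading off the two ends of the chain of equalities in \eqref{eq:conv2.2} — together with checking that the summation range $0 \leq k \leq 2n+1$ in the odd-degree coefficient matches the range appearing there. It may also be worth remarking that no convergence question arises, since each coefficient of $t^m$ is a finite $\mF$-linear combination of words in $\mV$, so the coefficient-wise extension of $\star$ to formal power series is unambiguous.
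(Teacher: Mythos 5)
Your proposal is correct and is exactly the paper's argument: the paper's proof consists of the single sentence that this is Proposition \ref{prop:W-*G} restated in terms of generating functions, and you have simply made explicit the coefficient-wise bookkeeping (even degrees from \eqref{eq:conv2.3}, odd degrees from the two ends of \eqref{eq:conv2.2}, which accounts for the $q^{-1}$ versus $q$ asymmetry). Nothing further is needed.
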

\begin{proof}
This is Propositions \ref{prop:W-*G} in terms of generating functions. 
\end{proof}

\begin{proposition}\rm\label{prop:W+*tGgenfun}
We have 

\begin{equation}\label{eq:genfun2.1}
\begin{split}
W^+(-t) \star \tG(t)&=\sum_{n \in \mN}(-1)^n[2]_q^{2n}y(xxyy)^nt^{2n}\\
&+q^{-1}\sum_{n \in \mN}(-1)^n[2]_q^{2n+1}xyy(xxyy)^nt^{2n+1}, 
\end{split}
\end{equation}
\begin{equation}\label{eq:genfun2.2}
\begin{split}
\tG(t) \star W^+(-t)&=\sum_{n \in \mN}(-1)^n[2]_q^{2n}y(xxyy)^nt^{2n}\\
&+q\sum_{n \in \mN}(-1)^n[2]_q^{2n+1}xyy(xxyy)^nt^{2n+1}. 
\end{split}
\end{equation}
\end{proposition}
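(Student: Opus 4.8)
The plan is to derive \eqref{eq:genfun2.1} and \eqref{eq:genfun2.2} directly from Proposition \ref{prop:W+*tG}, simply repackaging the alternating sums appearing there as coefficients in a product of formal power series in $t$. Recall that $W^+(t)=\sum_{n \in \mN}W_{n+1}t^n$ and $\tG(t)=\sum_{n \in \mN}\tG_n t^n$, so that $W^+(-t)=\sum_{j \in \mN}(-1)^j W_{j+1}t^j$.

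First I would extend the $q$-shuffle product bilinearly over $\mF[[t]]$ and form the Cauchy product
\[
W^+(-t) \star \tG(t)=\sum_{m \in \mN}\Big(\sum_{k=0}^m(-1)^k\,W_{k+1} \star \tG_{m-k}\Big)t^m.
\]
Next I would split the coefficient of $t^m$ according to the parity of $m$. For $m=2n$, the coefficient is $\sum_{k=0}^{2n}(-1)^k W_{k+1} \star \tG_{2n-k}$, which equals $(-1)^n[2]_q^{2n}y(xxyy)^n$ by \eqref{eq:conv2.4}. For $m=2n+1$, the coefficient is $\sum_{k=0}^{2n+1}(-1)^k W_{k+1} \star \tG_{2n+1-k}$, which equals $q^{-1}(-1)^n[2]_q^{2n+1}xyy(xxyy)^n$ by \eqref{eq:conv2.1}. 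Summing over $n \in \mN$ yields \eqref{eq:genfun2.1}.

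For \eqref{eq:genfun2.2} I would run the same computation with the two factors in the opposite order, so that the coefficient of $t^m$ becomes $\sum_{k=0}^m(-1)^k \tG_{m-k} \star W_{k+1}$, and then invoke the second equalities in \eqref{eq:conv2.4} and \eqref{eq:conv2.1}: the $t^{2n}$ coefficient is again $(-1)^n[2]_q^{2n}y(xxyy)^n$, while the $t^{2n+1}$ coefficient is now $q(-1)^n[2]_q^{2n+1}xyy(xxyy)^n$, which gives \eqref{eq:genfun2.2}.

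I do not expect any genuine obstacle here; the argument is purely formal, mirroring the proofs of Propositions \ref{prop:genfun1}--\ref{prop:W-*Ggenfun}, which are likewise restatements of their Section 5 counterparts. The only point requiring care is the bookkeeping of indices and signs, namely verifying that the substitution $t \mapsto -t$ on the left factor supplies exactly the sign $(-1)^k$ and that the Cauchy product then reproduces precisely the alternating sums $\sum_{k=0}^{2n}(-1)^k W_{k+1}\star\tG_{2n-k}$ and $\sum_{k=0}^{2n+1}(-1)^k W_{k+1}\star\tG_{2n+1-k}$ of Proposition \ref{prop:W+*tG}.
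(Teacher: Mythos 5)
Your proposal is correct and is exactly the paper's argument: the paper's proof consists of the single remark that the proposition is Proposition \ref{prop:W+*tG} restated in terms of generating functions, and your Cauchy-product expansion with the parity split, invoking \eqref{eq:conv2.4} for the $t^{2n}$ coefficients and \eqref{eq:conv2.1} (with its two one-sided equalities supplying the $q^{-1}$ and $q$ factors) for the $t^{2n+1}$ coefficients, is precisely the bookkeeping that remark leaves implicit.
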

\begin{proof}
This is Propositions \ref{prop:W+*tG} in terms of generating functions. 
\end{proof}

\begin{proposition}\rm\label{prop:W+*Ggenfun}
We have 

\begin{equation}\label{eq:genfun3.3}
\begin{split}
W^+(-t) \star G(t)&=\sum_{n \in \mN}(-1)^n[2]_q^{2n}(yyxx)^nyt^{2n}\\
&+q\sum_{n \in \mN}(-1)^n[2]_q^{2n+1}(yyxx)^nyyxt^{2n+1}, 
\end{split}
\end{equation}
\begin{equation}\label{eq:genfun3.4}
\begin{split}
G(t) \star W^+(-t)&=\sum_{n \in \mN}(-1)^n[2]_q^{2n}(yyxx)^nyt^{2n}\\
&+q^{-1}\sum_{n \in \mN}(-1)^n[2]_q^{2n+1}(yyxx)yyx^nt^{2n+1}. 
\end{split}
\end{equation}
\end{proposition}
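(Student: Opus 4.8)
The plan is to obtain \eqref{eq:genfun3.3} and \eqref{eq:genfun3.4} from Proposition \ref{prop:W+*G} by the same generating-function bookkeeping used to prove Propositions \ref{prop:genfun1}--\ref{prop:W+*tGgenfun}. Recall that
\begin{equation*}
G(t)=\sum_{n \in \mN}G_n t^n, \qquad W^+(t)=\sum_{n \in \mN}W_{n+1}t^n
\end{equation*}
are elements of $\mV[[t]]$, and that the $q$-shuffle product $\star$ extends $\mF[[t]]$-bilinearly to $\mV[[t]]$. This extension is well defined coefficient-wise: for a fixed power $t^N$ only finitely many pairs $(G_m,W_{n+1})$ with $m+n=N$ contribute, so the coefficient of $t^N$ in a $\star$-product is a finite $\mF$-linear combination of words.

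First I would expand, using $W^+(-t)=\sum_{n \in \mN}(-1)^n W_{n+1}t^n$,
\begin{equation*}
W^+(-t) \star G(t)=\sum_{m,n \in \mN}(-1)^m\bigl(W_{m+1} \star G_n\bigr)t^{m+n}=\sum_{N \in \mN}t^N\sum_{k=0}^{N}(-1)^k\,W_{k+1} \star G_{N-k},
\end{equation*}
and then split the outer sum according to the parity of $N$. For $N=2n$ the inner sum equals $(-1)^n[2]_q^{2n}(yyxx)^ny$ by the first equality in \eqref{eq:conv3.4}; for $N=2n+1$ it equals $q(-1)^n[2]_q^{2n+1}(yyxx)^nyyx$ by the first equality in \eqref{eq:conv3.2} (after clearing the factor $q^{-1}$ there). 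Recombining the even and odd families yields \eqref{eq:genfun3.3}. The companion identity \eqref{eq:genfun3.4} comes out the same way from
\begin{equation*}
G(t) \star W^+(-t)=\sum_{N \in \mN}t^N\sum_{k=0}^{N}(-1)^k\,G_{N-k} \star W_{k+1},
\end{equation*}
now invoking the \emph{second} equalities of \eqref{eq:conv3.4} and \eqref{eq:conv3.2}; the latter replaces the factor $q$ in the odd-degree terms by $q^{-1}$, which is exactly the discrepancy between \eqref{eq:genfun3.3} and \eqref{eq:genfun3.4}. Along the way I would correct the apparent typographical slip in \eqref{eq:genfun3.4}: the doubly alternating word there should read $(yyxx)^nyyx$.

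There is no genuine obstacle; the statement is a transcription of Proposition \ref{prop:W+*G}. The only points needing care are routine: keeping the alternating signs and the powers of $q$ straight through the reindexing $(m,n)\mapsto(k,N-k)$, and checking that the $N=0$ and $N=1$ terms are correctly captured, which they are since $G_0=\m1$ and $W_1=y$ produce the $n=0$ summands $y$ and $q^{\pm 1}[2]_q\,yyx$ on the two sides.
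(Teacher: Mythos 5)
Your proposal is correct and matches the paper's proof, which simply states that the proposition is Proposition \ref{prop:W+*G} rewritten in terms of generating functions; you have just made the coefficient-of-$t^N$ bookkeeping and the parity split explicit. Your observation that \eqref{eq:genfun3.4} contains a typographical slip and should read $(yyxx)^nyyx$ is also correct.
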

\begin{proof}
This is Propositions \ref{prop:W+*G} in terms of generating functions. 
\end{proof}

\begin{proposition}\rm\label{prop:G*tGgenfun}
We have 

\begin{equation}\label{eq:genfun4.1}
\begin{split}
G(-t) \star \tG(t)&=\sum_{n \in \mN}(-1)^n[2]_q^{2n-1}\big(q^{-1}xyy(xxyy)^{n-1}x+qy(xxyy)^{n-1}xxy\big)t^{2n}\\
&+\sum_{n \in \mN}(-1)^n[2]_q^{2n}\big(x(yyxx)^ny-y(xxyy)^nx\big)t^{2n+1}, 
\end{split}
\end{equation}
\begin{equation}\label{eq:genfun4.2}
\begin{split}
\tG(t) \star G(-t)&=\sum_{n \in \mN}(-1)^n[2]_q^{2n-1}\big(qxyy(xxyy)^{n-1}x+q^{-1}y(xxyy)^{n-1}xxy\big)t^{2n}\\
&+\sum_{n \in \mN}(-1)^n[2]_q^{2n}\big(x(yyxx)^ny-y(xxyy)^nx\big)t^{2n+1}. 
\end{split}
\end{equation}
\end{proposition}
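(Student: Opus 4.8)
The plan is to recognize \eqref{eq:genfun4.1} and \eqref{eq:genfun4.2} as nothing more than Proposition \ref{prop:G*tG} rewritten in terms of generating functions, exactly in the manner of Propositions \ref{prop:genfun1}--\ref{prop:W+*Ggenfun}. Write $\tG(t)=\sum_{n\in\mN}\tG_n t^n$ and $G(t)=\sum_{n\in\mN}G_n t^n$, with the conventions $\tG_0=G_0=\m1$. Since the $q$-shuffle product is bilinear, substituting $-t$ into the left-hand factor and multiplying out gives
\[
G(-t)\star\tG(t)=\sum_{m\in\mN}\Bigl(\sum_{k=0}^{m}(-1)^k\,G_k\star\tG_{m-k}\Bigr)t^m ,
\]
so the whole task reduces to identifying the coefficient of $t^m$ for each $m\in\mN$. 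I would do this by splitting on the parity of $m$ and invoking Proposition \ref{prop:G*tG}.

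For odd $m=2n+1$, the coefficient of $t^m$ is literally the left-hand side of \eqref{eq:conv4.1}, hence equals $(-1)^n[2]_q^{2n}\bigl(x(yyxx)^ny-y(xxyy)^nx\bigr)$, which is the coefficient of $t^{2n+1}$ in the second sum on the right-hand side of \eqref{eq:genfun4.1}. For even $m=2n+2$ with $n\in\mN$, the coefficient of $t^m$ is literally the left-hand side of \eqref{eq:conv4.7}, hence equals $(-1)^{n+1}[2]_q^{2n+1}\bigl(q^{-1}xyy(xxyy)^nx+qy(xxyy)^nxxy\bigr)$, and this is precisely the summand of index $n+1$ in the first sum on the right-hand side of \eqref{eq:genfun4.1}. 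It remains to treat $m=0$: there the coefficient is $G_0\star\tG_0=\m1$, and it matches the $n=0$ summand of that first sum, which reads $[2]_q^{-1}\bigl(q^{-1}+q\bigr)\m1=\m1$, once one adopts the harmless convention that $xyy(xxyy)^{-1}x$ and $y(xxyy)^{-1}xxy$ both denote $\m1$; here one uses $q+q^{-1}=[2]_q$. This proves \eqref{eq:genfun4.1}. Identity \eqref{eq:genfun4.2} follows identically, now expanding $\tG(t)\star G(-t)=\sum_m\bigl(\sum_{k=0}^m(-1)^k\tG_{m-k}\star G_k\bigr)t^m$ and quoting \eqref{eq:conv4.2} for the odd coefficients and \eqref{eq:conv4.8} for the even ones.

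No genuinely new computation is required beyond Proposition \ref{prop:G*tG}; the only points needing care are purely bookkeeping ones — keeping the parity split straight, matching the $t^{2n+2}$-coefficient to the index-$(n+1)$ case of the generating function, and reconciling the $m=0$ coefficient with the $n=0$ summand via the convention above together with $q+q^{-1}=[2]_q$. I expect that constant-term reconciliation to be the one mildly subtle step; everything else is mechanical.
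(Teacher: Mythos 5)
Your proposal is correct and takes essentially the same route as the paper, whose entire proof is the one-line remark that the proposition is Proposition \ref{prop:G*tG} rewritten in terms of generating functions; your parity split and coefficient matching are exactly the bookkeeping that remark leaves implicit. Your reconciliation of the constant term via the convention $xyy(xxyy)^{-1}x=y(xxyy)^{-1}xxy=\m1$ and $q+q^{-1}=[2]_q$ is a reasonable reading of the $n=0$ summand that the paper does not spell out.
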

\begin{proof}
This is Proposition \ref{prop:G*tG} in terms of generating functions. 
\end{proof}

\begin{proposition}\rm\label{prop:W+*W-genfun}
We have 

\begin{equation}\label{eq:genfun4.3}
\begin{split}
W^+(-t) \star W^-(t)&=\sum_{n \in \mN}(-1)^n[2]_q^{2n}\big(q^{-2}x(yyxx)^ny+y(xxyy)^nx\big)t^{2n}\\
&+q^{-1}\sum_{n \in \mN}(-1)^n[2]_q^{2n+1}\big(xyy(xxyy)^nx-yxx(yyxx)^ny\big)t^{2n+1}, 
\end{split}
\end{equation}
\begin{equation}\label{eq:genfun4.4}
\begin{split}
W^-(t) \star W^+(-t)&=\sum_{n \in \mN}(-1)^n[2]_q^{2n}\big(x(yyxx)^ny+q^{-2}y(xxyy)^nx\big)t^{2n}\\
&+q^{-1}\sum_{n \in \mN}(-1)^n[2]_q^{2n+1}\big(xyy(xxyy)^nx-yxx(yyxx)^ny\big)t^{2n+1}. 
\end{split}
\end{equation}
\end{proposition}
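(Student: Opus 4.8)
The plan is to recognize Proposition \ref{prop:W+*W-genfun} as nothing more than the generating-function repackaging of Proposition \ref{prop:W+*W-}, exactly as the analogous ``genfun'' propositions (\ref{prop:genfun1}, \ref{prop:W-*tGgenfun}, \ldots, \ref{prop:G*tGgenfun}) repackage their counterparts in Section 5. So the first step is to fix the definitions of the generating functions $W^+(t)$ and $W^-(t)$: presumably $W^-(t)=\sum_{k\in\mN}W_{-k}t^k$ and $W^+(t)=\sum_{k\in\mN}W_{k+1}t^k$, with $W_0=x$, $W_1=y$ (and, consistently with the other genfun proofs, $\tG(t)=\sum_{k\in\mN}\tG_kt^k$, $G(t)=\sum_{k\in\mN}G_kt^k$). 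Since the $q$-shuffle product is bilinear, it extends to formal power series in $t$ with coefficients in $\mV$, and the coefficient of $t^m$ in $W^+(-t)\star W^-(t)$ is $\sum_{k=0}^{m}(-1)^kW_{k+1}\star W_{k-m}$.

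The second step is then a case split on the parity of $m$. When $m=2n$ is even, the coefficient of $t^{2n}$ in $W^+(-t)\star W^-(t)$ is $\sum_{k=0}^{2n}(-1)^kW_{k+1}\star W_{k-2n}$, which by \eqref{eq:conv4.3} equals $(-1)^n[2]_q^{2n}\big(q^{-2}x(yyxx)^ny+y(xxyy)^nx\big)$; this is precisely the coefficient of $t^{2n}$ on the right-hand side of \eqref{eq:genfun4.3}. When $m=2n+1$ is odd, the coefficient of $t^{2n+1}$ is $\sum_{k=0}^{2n+1}(-1)^kW_{k+1}\star W_{k-2n-1}$, which by \eqref{eq:conv4.5} equals $(-1)^nq^{-1}[2]_q^{2n+1}\big(xyy(xxyy)^nx-yxx(yyxx)^ny\big)$, matching the coefficient of $t^{2n+1}$ on the right of \eqref{eq:genfun4.3}. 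Assembling the even and odd parts gives \eqref{eq:genfun4.3}. The proof of \eqref{eq:genfun4.4} is identical, with \eqref{eq:conv4.4} supplying the even coefficients and \eqref{eq:conv4.6} the odd coefficients of $W^-(t)\star W^+(-t)$; note that since the right-hand side of \eqref{eq:conv4.6} equals that of \eqref{eq:conv4.5}, the odd part of \eqref{eq:genfun4.4} coincides with that of \eqref{eq:genfun4.3}, as the statement indicates.

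There is essentially no obstacle here: the content is entirely in Proposition \ref{prop:W+*W-}, and the only thing to be careful about is the bookkeeping of signs and index shifts when substituting $t\mapsto -t$ in one factor (the sign $(-1)^k$ attached to $W_{k+1}$ in the convolution comes from expanding $W^+(-t)$, so it is the argument of $W^+$, not $W^-$, that is negated — consistent with the ordering $W^+(-t)\star W^-(t)$ in \eqref{eq:genfun4.3} and $W^-(t)\star W^+(-t)$ in \eqref{eq:genfun4.4}). One should also double-check that the lower limit of the $t$-degree is correct, i.e.\ that there is no $t^{-1}$ or constant anomaly: for $m=0$ the even formula gives the $n=0$ term $x y + \cdots$ wait — more precisely $q^{-2}xy\star$ \ldots one simply confirms the $n=0$ instances of \eqref{eq:conv4.3}--\eqref{eq:conv4.6} reduce to the correct low-degree shuffle products, which is routine. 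Given all of this, the write-up can be as short as a single sentence, in the style of the preceding genfun proofs: ``This is Proposition \ref{prop:W+*W-} in terms of generating functions.''
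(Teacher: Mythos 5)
Your proposal is correct and matches the paper's approach exactly: the paper's entire proof is the one sentence ``This is Proposition \ref{prop:W+*W-} in terms of generating functions,'' and your parity-split on the $t$-coefficients, invoking \eqref{eq:conv4.3}--\eqref{eq:conv4.6}, is precisely the routine verification that sentence leaves implicit.
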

\begin{proof}
This is Proposition \ref{prop:W+*W-} in terms of generating functions. 
\end{proof}

\section{Appendix A}
In this appendix, we list some known commutator relations on the alternating words. These commutator relations were obtained by Terwilliger in \cite[Propositions 5.10 and 5.11]{ter_alternating}. We remark that these commutator relations are with respect to the $q$-shuffle product. 

\medskip
\noindent Let $i,j \in \mN$. The following relations hold in $U$. 
\begin{align}
&[W_{-i},W_{-j}]=0, \hspace{2.4em} [W_{i+1},W_{j+1}]=0, \\
&[G_{i+1},G_{j+1}]=0, \hspace{2em} [\tG_{i+1},\tG_{j+1}]=0, \\
&[W_{-i},W_{j+1}]=[W_{-j},W_{i+1}], \\
&[W_{-i},G_{j+1}]=[W_{-j},G_{i+1}], \\
&[W_{-i},\tG_{j+1}]=[W_{-j},\tG_{i+1}], \\
&[W_{i+1},G_{j+1}]=[W_{j+1},G_{i+1}], \\
&[W_{i+1},\tG_{j+1}]=[W_{j+1},\tG_{i+1}], \\
&[G_{i+1},\tG_{j+1}]=[G_{j+1},\tG_{i+1}], \\
&[W_{-i},G_j]_q=[W_{-j},G_i]_q, \hspace{2em} [G_i,W_{j+1}]_q=[G_j,W_{i+1}]_q, \\
&[\tG_i,W_{-j}]_q=[\tG_j,W_{-i}]_q, \hspace{2em} [W_{i+1},\tG_j]_q=[W_{j+1},\tG_i]_q, \\
&[G_i,\tG_{j+1}]-[G_j,\tG_{i+1}]=q[W_{-j},W_{i+1}]_q-q[W_{-i},W_{j+1}]_q, \\
&[\tG_i,G_{j+1}]-[\tG_j,G_{i+1}]=q[W_{j+1},W_{-i}]_q-q[W_{i+1},W_{-j}]_q, \\
&[G_{i+1},\tG_{j+1}]_q-[G_{j+1},\tG_{i+1}]_q=q[W_{-j},W_{i+2}]-q[W_{-i},W_{j+2}], \\
&[\tG_{i+1},G_{j+1}]_q-[\tG_{j+1},G_{i+1}]_q=q[W_{j+1},W_{-i-1}]-q[W_{i+1},W_{-j-1}]. 
\end{align}

\section{Appendix B}
Recall that in Section 4 we give a commutator relation for each choice of one letter and one doubly alternating word. It is then natural to ask what are the commutator relations involving two doubly alternating words. Considering that there are $16$ families of doubly alternating words, the number of such commutator relations would be large. Furthermore, most of these commutator relations turn out to be complicated. In this appendix, we present some simple commutator relations. The readers may use the operators $x^{-1}$, $y^{-1}$ from Definition \ref{def:derivatives} to obtain the more complicated commutator relations. 

\begin{proposition}\rm\label{prop:commutators1}
Let $i,j \in \mN$. The following relations hold in $U$. 

\begin{equation}\label{eq:commutators1.1}
[(xxyy)^i,(xxyy)^j]=0, 
\end{equation}
\begin{equation}\label{eq:commutators1.2}
[(yyxx)^i,(yyxx)^j]=0, 
\end{equation}
\begin{equation}\label{eq:commutators1.3}
[(xxyy)^ixx,(xxyy)^jxx]=0, 
\end{equation}
\begin{equation}\label{eq:commutators1.4}
[(yyxx)^iyy,(yyxx)^jyy]=0. 
\end{equation}
\end{proposition}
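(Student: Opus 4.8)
\textbf{Proof proposal for Proposition \ref{prop:commutators1}.}

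The plan is to reduce each of the four identities to statements we have already established, using the generating-function packaging from Section 6 together with Lemma \ref{lem:eqcond}. For \eqref{eq:commutators1.1}, I would start from \eqref{eq:genfun1.1}, which expresses the generating function $F(t) := \sum_{n \in \mN}(-1)^n[2]_q^{2n}(xxyy)^n t^{2n}$ as $\tG(-t) \star \tG(t)$. Since the alternating words $\{\tG_k\}_{k \in \mN}$ mutually commute under $\star$ by \cite[Proposition 5.10]{ter_alternating} (equivalently by the relation $[\tG_{i+1},\tG_{j+1}]=0$ in Appendix A, together with $\tG_0 = \m1$ being central), the products $\tG(-s) \star \tG(s)$ and $\tG(-t) \star \tG(t)$ commute with each other; hence $F(s) \star F(t) = F(t) \star F(s)$ as a formal identity in two indeterminates $s,t$. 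Extracting the coefficient of $s^{2i}t^{2j}$ and dividing by the nonzero scalar $[2]_q^{2i+2j}$ gives $(xxyy)^i \star (xxyy)^j = (xxyy)^j \star (xxyy)^i$, which is \eqref{eq:commutators1.1}. The identity \eqref{eq:commutators1.2} follows identically, replacing $\tG$ by $G$ and using $[G_{i+1},G_{j+1}]=0$ together with \eqref{eq:genfun1.2}.

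For \eqref{eq:commutators1.3} and \eqref{eq:commutators1.4} I would run the same argument one level up. From \eqref{eq:genfun1.3}, the generating function $H(t) := \sum_{n \in \mN}(-1)^n q[2]_q^{2n+1}(xxyy)^n xx\, t^{2n}$ equals $W^-(-t) \star W^-(t)$, and the words $\{W_{-k}\}_{k \in \mN}$ mutually commute under $\star$ by the relation $[W_{-i},W_{-j}]=0$ in Appendix A (with $W_0 = x$ included, and noting $W_{-k}$ for $k \ge 0$ ranges over exactly the words $(xy)^k x$ appearing in $W^-(t)$). Therefore $H(s)$ and $H(t)$ commute, and extracting the coefficient of $s^{2i}t^{2j}$ and cancelling the nonzero scalar $q^2[2]_q^{2i+2j+2}$ yields $(xxyy)^i xx \star (xxyy)^j xx = (xxyy)^j xx \star (xxyy)^i xx$. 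The proof of \eqref{eq:commutators1.4} is the same with $W^-$ replaced by $W^+$ and $[W_{i+1},W_{j+1}]=0$, using \eqref{eq:genfun1.4}.

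Alternatively, if one prefers to avoid generating functions, each identity can be proved directly by induction on $i+j$ using Lemma \ref{lem:eqcond}: to show $v_1 \star v_2 = v_2 \star v_1$ for two doubly alternating words of the relevant shape, it suffices to check equality after applying $x^{-1}$ and $y^{-1}$ on the left, and the Leibniz-type behavior of these operators with respect to $\star$ (as recorded implicitly in \cite{PT} and used throughout Section 5) reduces each derivative to a commutator of doubly alternating words of strictly smaller total length, or to one of the alternating-word commutators in Appendix A. I expect the main obstacle — and it is a mild one — to be the bookkeeping needed to justify that left-multiplication by the formal series $\tG(-s)\star\tG(s)$, $W^-(-s)\star W^-(s)$, etc., is well-defined and commutes degree-by-degree; this is routine because in each fixed word-length the sums defining these series are finite, so no convergence issue arises and the coefficient-extraction step is legitimate.
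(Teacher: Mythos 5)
Your proposal is correct and is essentially the paper's own argument: both rest on the fact that \eqref{eq:conv1.1}--\eqref{eq:conv1.4} (equivalently, their generating-function repackaging in Proposition \ref{prop:genfun1}) exhibit $(xxyy)^n$, $(yyxx)^n$, $(xxyy)^nxx$, $(yyxx)^nyy$ as nonzero scalar multiples of $\star$-polynomials in the mutually commuting families $\{\tG_k\}$, $\{G_k\}$, $\{W_{-k}\}$, $\{W_{k+1}\}$ respectively, from which the four commutator identities follow at once. The coefficient-extraction step in two indeterminates is a cosmetic rephrasing of the paper's direct appeal to Proposition \ref{prop:conv1}, and your remarks on $\tG_0=\m1$ and degree-by-degree finiteness are sound.
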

\begin{proof}
By \eqref{eq:conv1.1}, the elements $(xxyy)^i,(xxyy)^j$ are polynomials in the elements $\{\tG_{n+1}\}_{n \in \mN}$. By \cite[Proposition 5.10]{ter_alternating}, the elements $\{\tG_{n+1}\}_{n \in \mN}$ mutually commutes. We obtain \eqref{eq:commutators1.1}. 

\medskip
\noindent The remaining relations can be proved in a similar way. 
\end{proof}

\begin{proposition}\rm\label{prop:commutators2}
Let $i,j \in \mN$. The following relations hold in $U$. 

\begin{equation}\label{eq:commutators2.1}
[(xxyy)^ixxy,(xxyy)^{j+1}]=[(xxyy)^jxxy,(xxyy)^{i+1}], 
\end{equation}
\begin{equation}\label{eq:commutators2.2}
[xyy(xxyy)^i,(xxyy)^{j+1}]=[xyy(xxyy)^j,(xxyy)^{i+1}], 
\end{equation}
\begin{equation}\label{eq:commutators2.3}
[(yyxx)^iyyx,(yyxx)^{j+1}]=[(yyxx)^jyyx,(yyxx)^{i+1}], 
\end{equation}
\begin{equation}\label{eq:commutators2.4}
[yxx(yyxx)^i,(yyxx)^{j+1}]=[yxx(yyxx)^j,(yyxx)^{i+1}]. 
\end{equation}
\end{proposition}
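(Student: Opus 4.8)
The plan is to derive each identity in Proposition~\ref{prop:commutators2} from the convolution formulas in Section~5 together with the symmetric commutator relations for the alternating words collected in Appendix~A. I would treat \eqref{eq:commutators2.1} in full and then indicate that the other three follow by the same argument (indeed, by the $x \leftrightarrow y$ symmetry for \eqref{eq:commutators2.3}, \eqref{eq:commutators2.4}, and by a parallel computation for \eqref{eq:commutators2.2}).

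First I would use \eqref{eq:conv3.1} to write $(-1)^n[2]_q^{2n+1}(xxyy)^nxxy = q^{-1}\sum_{k=0}^{2n+1}(-1)^k W_{-k} \star \tG_{2n+1-k}$, so that up to a nonzero scalar the word $(xxyy)^ixxy$ is a fixed linear combination of $\star$-products $W_{-k} \star \tG_{2i+1-k}$; similarly, by \eqref{eq:conv1.1}, $(xxyy)^{j+1}$ is (up to a scalar) a linear combination of $\tG_k \star \tG_{2j+2-k}$. Expanding the commutator $[(xxyy)^ixxy,(xxyy)^{j+1}]$ bilinearly therefore reduces the claim to a statement about commutators of the form $[W_{-a} \star \tG_b,\, \tG_c \star \tG_d]$. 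Since all the $\tG$'s mutually commute (by \cite[Proposition 5.10]{ter_alternating}), such a commutator collapses to terms built from $[W_{-a},\tG_c]$ (conjugated by various $\tG$'s, which can be moved freely past each other). Here is where I would invoke the symmetric relation $[\tG_i,W_{-j}]_q=[\tG_j,W_{-i}]_q$ from Appendix~A, or rather the plain-commutator consequences of the symmetry relations among the alternating words, to match the two sides.

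A cleaner route, which I think is actually the intended one and which I would write up, bypasses the explicit expansion: apply the operators $x^{-1}$ and $y^{-1}$ (Definition~\ref{def:derivatives}) and use Lemma~\ref{lem:eqcond}. One checks that $x^{-1}$ and $y^{-1}$ interact with $\star$ and with the doubly alternating words in a controlled way, converting the asserted identity into an identity among shorter doubly alternating words or among alternating words that is already known from Section~5 or Appendix~A, and one sets up an induction on $i+j$. Concretely, stripping a letter off the left of each doubly alternating word in \eqref{eq:commutators2.1} should produce a relation of the same shape with smaller indices (together with boundary terms that are alternating words and handled by Appendix~A), giving the inductive step; the base case $i=j=0$ reads $[xxy,xxyy]=[xxy,xxyy]$, which is trivial, or more usefully a small direct $q$-shuffle computation.

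The main obstacle I anticipate is bookkeeping the powers of $q$ and the alternating boundary terms that appear when one of $x^{-1}$, $y^{-1}$ is applied: the $q$-shuffle product does not interact with these one-sided truncation operators as simply as an ordinary concatenation would, and one must account for the $q^{\langle\cdot,\cdot\rangle}$ weights that the definition of $\star$ attaches when a letter is shuffled past $x$ versus past $y$. I would isolate this in a short lemma computing $x^{-1}(u \star v)$ and $y^{-1}(u \star v)$ for the specific words $u,v$ at hand (as is implicitly done in the proofs of Propositions~\ref{prop:W-*tG}--\ref{prop:W+*W-}), after which the symmetry in $i,j$ on both sides becomes manifest and the induction closes. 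The structural fact doing the real work throughout is the mutual commutativity of the families $\{\tG_{n+1}\}$ and $\{G_{n+1}\}$ together with the symmetric exchange relations in Appendix~A; everything else is routine verification.
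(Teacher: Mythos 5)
There is a gap between the toolkit you assemble and the argument you actually describe. The paper's proof is a single step in the direction opposite to the one you propose: rather than applying truncation operators to \eqref{eq:commutators2.1} and inducting downward, it applies $y^{-1}$ on the \emph{right} to the already-established identity \eqref{eq:commutators1.1} (with $i,j$ replaced by $i+1,j+1$) to \emph{generate} \eqref{eq:commutators2.1}. The point is the Leibniz-type rule coming from the last recursion in the definition of $\star$: for words $u,v$ both ending in $y$ one has $(u \star v)y^{-1}=u\star(vy^{-1})+(uy^{-1})\star v\,q^{\langle y,v\rangle}$, and for $v=(xxyy)^{j+1}$ the weight $\langle y,v\rangle$ vanishes because $v$ contains equally many $x$'s and $y$'s. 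Hence
\begin{equation*}
0=\big[(xxyy)^{i+1},(xxyy)^{j+1}\big]y^{-1}=\big[(xxyy)^ixxy,(xxyy)^{j+1}\big]-\big[(xxyy)^jxxy,(xxyy)^{i+1}\big],
\end{equation*}
which is exactly \eqref{eq:commutators2.1}; no induction, no boundary terms, and the $q$-bookkeeping you worry about evaporates. The other three relations follow the same way from \eqref{eq:commutators1.1}--\eqref{eq:commutators1.2} and the $x\leftrightarrow y$ symmetry.

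Both of your concrete plans, as stated, would run into trouble. In your second route, stripping a letter off the \emph{left} of $(xxyy)^ixxy$ produces $xyy(xxyy)^{i-1}xxy$, a member of a different family in \eqref{eq:dalternating2}--\eqref{eq:dalternating4}, so the resulting relation is not ``of the same shape with smaller indices'' and your induction does not close as described; moreover the base case $i=j=0$ of \eqref{eq:commutators2.1} being trivial does not anchor an induction on $i+j$, since the relation is not symmetric-trivial for $i\neq j$. In your first route, after expanding via \eqref{eq:conv3.1} and \eqref{eq:conv1.1} you are left with sums of terms $\tG_l\star[W_{-k},\tG_m]\star\tG_{m'}$ in which the commutators $[W_{-k},\tG_m]$ do not commute with the surrounding $\tG$'s, and the exterior indices $2i+1-k$ versus $2j+1-k$ differ between the two sides; invoking $[W_{-i},\tG_{j+1}]=[W_{-j},\tG_{i+1}]$ termwise does not obviously match the two double sums. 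That route may be salvageable, but it is substantially harder than the one-line right-truncation of \eqref{eq:commutators1.1}, which is the relation your own Proposition-\ref{prop:commutators1}-style argument (via \eqref{eq:conv1.1} and the mutual commutativity of the $\tG_k$) already gives you for free.
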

\begin{proof}
To obtain \eqref{eq:commutators2.1}, on both sides of \eqref{eq:commutators1.1} apply $y^{-1}$ to the right. 

\medskip
\noindent The remaining relations can be proved in a similar way. 
\end{proof}

\begin{proposition}\rm\label{prop:commutators3}
Let $i,j \in \mN$. The following relations hold in $U$. 

\begin{equation}\label{eq:commutators3.1}
[(xxyy)^ix,(xxyy)^{j+1}]=[(xxyy)^jx,(xxyy)^{i+1}], 
\end{equation}
\begin{equation}\label{eq:commutators3.2}
[y(xxyy)^i,(xxyy)^{j+1}]=[y(xxyy)^j,(xxyy)^{i+1}], 
\end{equation}
\begin{equation}\label{eq:commutators3.3}
[(yyxx)^iy,(yyxx)^{j+1}]=[(yyxx)^jy,(yyxx)^{i+1}], 
\end{equation}
\begin{equation}\label{eq:commutators3.4}
[x(yyxx)^i,(yyxx)^{j+1}]=[x(yyxx)^j,(yyxx)^{i+1}]. 
\end{equation}
\end{proposition}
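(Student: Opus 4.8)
The plan is to follow the strategy of Proposition~\ref{prop:commutators2}: derive each of \eqref{eq:commutators3.1}--\eqref{eq:commutators3.4} by applying a short sequence of the strip operators of Definition~\ref{def:derivatives} to one of the commuting relations $[(xxyy)^{i+1},(xxyy)^{j+1}]=0$ or $[(yyxx)^{i+1},(yyxx)^{j+1}]=0$ furnished by Proposition~\ref{prop:commutators1}. The choice of operators is dictated by the word identities $(xxyy)^{n+1}=(xxyy)^{n}x\cdot xyy$ and $(xxyy)^{n+1}=xxy\cdot y(xxyy)^{n}$: deleting the three-letter suffix $xyy$ (that is, applying $y^{-1}$, then $y^{-1}$, then $x^{-1}$ on the right) carries $(xxyy)^{n+1}$ to $(xxyy)^{n}x$, while deleting the three-letter prefix $xxy$ (applying $x^{-1}$, then $x^{-1}$, then $y^{-1}$ on the left) carries it to $y(xxyy)^{n}$.

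First I would record how $x^{-1},y^{-1}$ interact with $\star$; this is immediate from the recursive definition in Section~2 by isolating the last (resp.\ first) letter of a product. Suppose $u$ ends in a letter $a$ and $v$ ends in a letter $b$, and for a letter $a$ write $\langle a,v\rangle$ for the sum of the pairings of $a$ against the successive letters of $v$; note $\langle a,v\rangle=0$ whenever $v$ has equally many $x$'s and $y$'s, in particular for $v=(xxyy)^{m}$. If $a=b$, then every word occurring in $u\star v$ ends in $a$, so $(u\star v)c^{-1}=0$ for the other letter $c$, while $(u\star v)a^{-1}=u\star(va^{-1})+\big((ua^{-1})\star v\big)q^{\langle a,v\rangle}$. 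If $a\neq b$, then $(u\star v)b^{-1}=u\star(vb^{-1})$ and $(u\star v)a^{-1}=\big((ua^{-1})\star v\big)q^{\langle a,v\rangle}$. There is a mirror statement for $x^{-1},y^{-1}$ applied on the left, with initial letters in place of terminal ones.

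Now I would prove \eqref{eq:commutators3.1} by applying $y^{-1},y^{-1},x^{-1}$ in turn on the right to $[(xxyy)^{i+1},(xxyy)^{j+1}]=0$. Using the formulas above, the two applications of $y^{-1}$ expand $(xxyy)^{i+1}\star(xxyy)^{j+1}$ as a sum of three products, one of which is a scalar multiple of $(xxyy)^{i}xxy\star(xxyy)^{j}xxy$; since both of these words end in $y$, that product is killed by the final $x^{-1}$, and all the $q$-exponents that appear have the form $\langle x,(xxyy)^{m}\rangle=0$. What is left of $(xxyy)^{i+1}\star(xxyy)^{j+1}$ is $(xxyy)^{i+1}\star(xxyy)^{j}x+(xxyy)^{i}x\star(xxyy)^{j+1}$, and the same computation applied to $(xxyy)^{j+1}\star(xxyy)^{i+1}$ gives the $i\leftrightarrow j$ transpose; subtracting, the relation $0=[(xxyy)^{i+1},(xxyy)^{j+1}]$ becomes exactly \eqref{eq:commutators3.1}. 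Identity \eqref{eq:commutators3.2} is handled the same way, now applying $x^{-1},x^{-1},y^{-1}$ on the left; the stray cross term is then a multiple of $xyy(xxyy)^{i}\star xyy(xxyy)^{j}$, all of whose words begin with $x$ and so are annihilated by the last left $y^{-1}$. Finally, \eqref{eq:commutators3.3} and \eqref{eq:commutators3.4} follow from \eqref{eq:commutators3.1} and \eqref{eq:commutators3.2} by applying the automorphism of $(\mV,\star)$ that interchanges $x$ and $y$ (legitimate because $\langle x,x\rangle=\langle y,y\rangle$ and $\langle x,y\rangle=\langle y,x\rangle$, and because the ideal $J$ of Definition~\ref{def:idealJ} is symmetric under this swap, so $U$ is preserved), or equivalently by rerunning the argument starting from $[(yyxx)^{i+1},(yyxx)^{j+1}]=0$.

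The step I expect to be the main obstacle is precisely the bookkeeping in the previous paragraph: three successive strips create a handful of intermediate products, and one must track the $q$-power $\langle a,v\rangle$ attached each time a letter is pushed through a $q$-shuffle and verify that, after all three operators are applied, every unwanted term disappears --- either because a product has two factors sharing a terminal (or initial) letter, so the next strip annihilates it, or because its attached $q$-exponent vanishes on a letter-balanced word $(xxyy)^{m}$. Once the interaction formulas are in place and one notes that all the doubly alternating words entering here are letter-balanced, what remains is routine verification rather than a genuine difficulty.
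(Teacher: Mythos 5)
Your proposal is correct and follows essentially the same route as the paper: the paper obtains \eqref{eq:commutators3.1} by applying $y^{-1}$ to the right of \eqref{eq:commutators1.1} (giving \eqref{eq:commutators2.1}) and then $y^{-1}x^{-1}$ to the right of that, which is exactly your sequence of three strips $y^{-1},y^{-1},x^{-1}$ performed in one pass, with the other relations handled symmetrically. Your explicit interaction formulas for $x^{-1},y^{-1}$ with $\star$ and the bookkeeping of the vanishing cross terms correctly fill in what the paper leaves as routine.
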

\begin{proof}
To obtain \eqref{eq:commutators3.1}, on both sides of \eqref{eq:commutators2.1} apply $y^{-1}x^{-1}$ to the right. 

\medskip
\noindent The remaining relations can be proved in a similar way. 
\end{proof}

\begin{proposition}\rm\label{prop:commutators4}
Let $i,j \in \mN$. The following relations hold in $U$. 

\begin{equation}\label{eq:commutators4.1}
[(xxyy)^ixx,(xxyy)^jxxy]=[(xxyy)^jxx,(xxyy)^ixxy], 
\end{equation}
\begin{equation}\label{eq:commutators4.2}
[(yyxx)^iyy,xyy(xxyy)^j]=[(yyxx)^jyy,xyy(xxyy)^i], 
\end{equation}
\begin{equation}\label{eq:commutators4.3}
[(yyxx)^iyy,(yyxx)^jyyx]=[(yyxx)^jyy,(yyxx)^iyyx], 
\end{equation}
\begin{equation}\label{eq:commutators4.4}
[(xxyy)^ixx,yxx(yyxx)^j]=[(xxyy)^jxx,yxx(yyxx)^i]. 
\end{equation}
\end{proposition}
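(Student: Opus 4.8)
The plan is to mimic the derivation chain that produced Propositions~\ref{prop:commutators2} and~\ref{prop:commutators3}: each of those relations was obtained from the symmetric identity \eqref{eq:commutators1.1} by applying the one-sided operators $y^{-1}$ (resp.\ $y^{-1}x^{-1}$) on the right. Here I would start one step further along, from the relations in Proposition~\ref{prop:commutators3}, and apply a suitable right-side operator. Concretely, \eqref{eq:commutators4.1} should follow from \eqref{eq:commutators3.1} by appending the letter pattern that turns $(xxyy)^{j+1}$ into $(xxyy)^jxxy$; the cleanest way to see this is to run the implication backwards — apply $y^{-1}$ on the right to both sides of \eqref{eq:commutators4.1} and check that one lands exactly on \eqref{eq:commutators3.1}, using Lemma~\ref{lem:eqcond} to promote the resulting equality of $y^{-1}$-images and $x^{-1}$-images back to an equality of the original elements.

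The key steps, in order, are: (1) fix which member of Proposition~\ref{prop:commutators3} each of \eqref{eq:commutators4.1}--\eqref{eq:commutators4.4} should descend from, matching the ``inner'' doubly alternating factor; for \eqref{eq:commutators4.1} this is \eqref{eq:commutators3.1}, for \eqref{eq:commutators4.2} it is \eqref{eq:commutators3.2}, for \eqref{eq:commutators4.3} it is \eqref{eq:commutators3.3}, and for \eqref{eq:commutators4.4} it is \eqref{eq:commutators3.4}. (2) For \eqref{eq:commutators4.1}, compute $x^{-1}$ and $y^{-1}$ applied on the right to both sides; since the rightmost letter of $(xxyy)^jxxy$ is $y$ and the rightmost letter of $(xxyy)^jxx$ is $x$, one of these operators annihilates the relevant terms and the other reproduces \eqref{eq:commutators3.1}, so by Lemma~\ref{lem:eqcond} the equality \eqref{eq:commutators4.1} holds. (3) Repeat for the other three, noting that \eqref{eq:commutators4.2}, \eqref{eq:commutators4.3}, \eqref{eq:commutators4.4} are obtained from \eqref{eq:commutators4.1} by the evident $x \leftrightarrow y$ symmetry and the word-reversal symmetry of the $q$-shuffle product, so once the first is done the rest follow by the same bookkeeping.

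The main obstacle I anticipate is step (2): one must be careful that applying a right-side operator to a \emph{commutator} of two elements does not factor through the bracket in the naive way, because $v^{-1}$ is only defined on $\mV$ as a linear map, not as an algebra map for $\star$. So the argument is really that $\big([(xxyy)^ixx,(xxyy)^jxxy] - [(xxyy)^jxx,(xxyy)^ixxy]\big)$ is a single element of $\mV$ whose $x^{-1}$- and $y^{-1}$-images on the right both vanish (the $x^{-1}$-image being zero is automatic since every word appearing ends in $y$, and the $y^{-1}$-image being zero is exactly \eqref{eq:commutators3.1} rewritten), whence the element itself is zero by Lemma~\ref{lem:eqcond}. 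Verifying that the $y^{-1}$-image on the right of $[(xxyy)^ixx,(xxyy)^jxxy]$ equals $[(xxyy)^ixx,(xxyy)^jxx]$ requires knowing that right-multiplication-by-$y$ in the $\star$-product, followed by $y^{-1}$ on the right, returns the original element up to the lower-order corrections that cancel in the commutator; this is the one computation that needs to be checked rather than quoted, and it is the crux of the proof.
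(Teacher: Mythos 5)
Your overall strategy (reduce the identity to previously established ones by computing the two right derivatives of the difference of the two sides and invoking Lemma \ref{lem:eqcond}) is the right kind of argument, and it is close in spirit to the paper's proof, which instead goes forward by applying $y^{-1}y^{-1}$ on the right of \eqref{eq:commutators2.1}. However, two of your concrete claims are wrong, and one of them hides a circularity. First, it is not true that ``every word appearing ends in $y$'' in $[(xxyy)^ixx,(xxyy)^jxxy]$: in a $q$-shuffle product $u\star v$ the last letter of each resulting word is the last letter of $u$ or of $v$, so here words ending in $x$ do occur, and the right $x^{-1}$-image does not vanish for free. A direct computation from the recursion for $\star$ gives
\begin{equation*}
\bigl([(xxyy)^ixx,(xxyy)^jxxy]\bigr)x^{-1}=q\,[(xxyy)^ix,(xxyy)^jxxy]_q,
\end{equation*}
so the vanishing of the $x^{-1}$-image of the difference is precisely \eqref{eq:commutators5.1} --- which the paper \emph{derives from} \eqref{eq:commutators4.1}. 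To avoid circularity you must first prove \eqref{eq:commutators5.1} independently; this can in fact be done by applying $y^{-1}$ on the right of \eqref{eq:commutators3.1}, but that step is missing from your writeup. Second, your identification of the $y^{-1}$-image is also off: one finds
\begin{equation*}
\bigl([(xxyy)^ixx,(xxyy)^jxxy]\bigr)y^{-1}=(xxyy)^ixx\star(xxyy)^jxx-q^{-4}(xxyy)^jxx\star(xxyy)^ixx,
\end{equation*}
so the $y^{-1}$-image of the difference equals $(1+q^{-4})[(xxyy)^ixx,(xxyy)^jxx]$, i.e.\ it reduces to \eqref{eq:commutators1.3}, not to \eqref{eq:commutators3.1} as you claim. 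With these two corrections (derive \eqref{eq:commutators5.1} from \eqref{eq:commutators3.1} first, and match the $y^{-1}$-image with \eqref{eq:commutators1.3}) your argument goes through, but as written the two reductions you name are the wrong ones and the key computation you flag as ``the crux'' is left undone.
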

\begin{proof}
To obtain \eqref{eq:commutators4.1}, on both sides of \eqref{eq:commutators2.1} apply $y^{-1}y^{-1}$ to the right. 

\medskip
\noindent The remaining relations can be proved in a similar way. 
\end{proof}

\begin{proposition}\rm\label{prop:commutators5}
Let $i,j \in \mN$. The following relations hold in $U$. 

\begin{equation}\label{eq:commutators5.1}
[(xxyy)^ix,(xxyy)^jxxy]_q=[(xxyy)^jx,(xxyy)^ixxy]_q, 
\end{equation}
\begin{equation}\label{eq:commutators5.2}
[xyy(xxyy)^i,y(xxyy)^j]_q=[xyy(xxyy)^j,y(xxyy)^i]_q, 
\end{equation}
\begin{equation}\label{eq:commutators5.3}
[(yyxx)^iy,(yyxx)^jyyx]_q=[(yyxx)^jy,(yyxx)^iyyx]_q, 
\end{equation}
\begin{equation}\label{eq:commutators5.4}
[yxx(yyxx)^i,x(yyxx)^j]_q=[yxx(yyxx)^j,x(yyxx)^i]_q. 
\end{equation}
\end{proposition}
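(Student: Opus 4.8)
The plan is to obtain each of the four relations in Proposition \ref{prop:commutators5} from the correspondingly-placed relation in Proposition \ref{prop:commutators3} by applying a single one of the operators $x^{-1}$, $y^{-1}$ from Definition \ref{def:derivatives} — on the right for \eqref{eq:commutators5.1} and \eqref{eq:commutators5.3}, on the left for \eqref{eq:commutators5.2} and \eqref{eq:commutators5.4}. These operators are linear maps, so they may be applied to both sides of an identity; the content is that they interact with the $q$-shuffle product by a Leibniz-type rule that can be read off directly from the recursive definition of $\star$ in Section 2. That rule introduces a scalar of the form $q^{\langle u,w\rangle}$, where $\langle u,w\rangle$ abbreviates the sum of $\langle u,\,\cdot\,\rangle$ over the letters of $w$, and it is exactly this scalar that promotes the ordinary commutator in Proposition \ref{prop:commutators3} to the $q$-commutator in Proposition \ref{prop:commutators5}.

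Concretely, for \eqref{eq:commutators5.1} I would set $u=(xxyy)^ix$ and $v=(xxyy)^{j+1}$ and apply $(~)y^{-1}$ to both sides of \eqref{eq:commutators3.1}. Since $u$ ends in the letter $x$, the recursion for $\star$ shows that every word occurring in $u \star v$ ends either in $x$ (the last letter of $u$) or in $y$ (the last letter of $v$), so $(u \star v)y^{-1}=u\star(vy^{-1})=(xxyy)^ix \star (xxyy)^jxxy$; symmetrically $(v \star u)y^{-1}=q^{\langle y,u\rangle}(vy^{-1})\star u$, and here $\langle y,u\rangle=(2i+1)\langle y,x\rangle+2i\langle y,y\rangle=-2$, so $(v \star u)y^{-1}=q^{-2}(xxyy)^jxxy \star (xxyy)^ix$. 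Applying $(~)y^{-1}$ to \eqref{eq:commutators3.1} therefore yields
\begin{equation*}
(xxyy)^ix \star (xxyy)^jxxy - q^{-2}(xxyy)^jxxy \star (xxyy)^ix = \big(\text{the same with } i \leftrightarrow j\big),
\end{equation*}
and since the left-hand side equals $q^{-1}[(xxyy)^ix,(xxyy)^jxxy]_q$, cancelling the common factor $q^{-1}$ produces \eqref{eq:commutators5.1}.

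Relation \eqref{eq:commutators5.3} follows in the same way with the roles of $x$ and $y$ interchanged, starting from \eqref{eq:commutators3.3}. For \eqref{eq:commutators5.2} the same strategy works with a left operator: applying $x^{-1}(~)$ to \eqref{eq:commutators3.2} leaves the factor $y(xxyy)^i$ intact — it begins with $y$, so the words it contributes at the front of a product are killed by $x^{-1}$ — while effectively stripping the leading $x$ off $(xxyy)^{j+1}$ to give $xyy(xxyy)^j$, and the accompanying scalar $q^{\langle x,\,y(xxyy)^i\rangle}=q^{-2}$ again converts the commutator into $[\,\cdot\,,\,\cdot\,]_q$; after relabelling $i \leftrightarrow j$ this is \eqref{eq:commutators5.2}. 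Finally \eqref{eq:commutators5.4} is \eqref{eq:commutators5.2} with $x$ and $y$ interchanged, obtained from \eqref{eq:commutators3.4}.

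The Leibniz-type rule for $x^{-1}$, $y^{-1}$ with respect to $\star$ is immediate from the recursion (and is implicit in \cite{PT}), and the exponents $\langle u,w\rangle$ are routine to compute, so the real work is purely organizational: for each of the four relations one must pick the operator and the side so that it annihilates the factor that should be preserved, and then check that the single resulting scalar — $q^{\pm 2}$ in every case — combines correctly with the $q^{\pm1}$ built into $[\,\cdot\,,\,\cdot\,]_q$. I expect that choice, together with verifying this ``no contamination'' property for the preserved factor, to be the only delicate point.
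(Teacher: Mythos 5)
Your proof is correct and takes essentially the same approach as the paper: each relation is obtained by applying one of the operators $x^{-1}$, $y^{-1}$ to a previously established relation of Appendix B, using the Leibniz-type behaviour of these operators with respect to $\star$. The only difference is bookkeeping --- the paper derives \eqref{eq:commutators5.1} by applying $x^{-1}$ on the right of \eqref{eq:commutators4.1} rather than $y^{-1}$ on the right of \eqref{eq:commutators3.1} --- and both choices of source relation are legitimate and yield the same computation.
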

\begin{proof}
To obtain \eqref{eq:commutators5.1}, on both sides of \eqref{eq:commutators4.1} apply $x^{-1}$ to the right. 

\medskip
\noindent The remaining relations can be proved in a similar way. 
\end{proof}

\begin{proposition}\rm\label{prop:commutators6}
Let $i,j \in \mN$. The following relations hold in $U$. 

\begin{equation}\label{eq:commutators6.1}
[(xxyy)^i,(xxyy)^jxxy]_{q^2}=[(xxyy)^j,(xxyy)^ixxy]_{q^2}, 
\end{equation}
\begin{equation}\label{eq:commutators6.2}
[xyy(xxyy)^i,(xxyy)^j]_{q^2}=[xyy(xxyy)^j,(xxyy)^i]_{q^2}, 
\end{equation}
\begin{equation}\label{eq:commutators6.3}
[(yyxx)^i,(yyxx)^jyyx]_{q^2}=[(yyxx)^j,(yyxx)^iyyx]_{q^2}, 
\end{equation}
\begin{equation}\label{eq:commutators6.4}
[yxx(yyxx)^i,(yyxx)^j]_{q^2}=[yxx(yyxx)^j,(yyxx)^i]_{q^2}. 
\end{equation}
\end{proposition}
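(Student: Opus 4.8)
\textbf{Proof proposal for Proposition \ref{prop:commutators6}.}

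The plan is to derive each identity in \eqref{eq:commutators6.1}--\eqref{eq:commutators6.4} from the corresponding identity in Proposition \ref{prop:commutators5} by applying one of the derivation-type operators $x^{-1}$, $y^{-1}$ from Definition \ref{def:derivatives} on the right, exactly as Propositions \ref{prop:commutators2}--\ref{prop:commutators5} were deduced from Proposition \ref{prop:commutators1}. Concretely, for \eqref{eq:commutators6.1} I would start from \eqref{eq:commutators5.1} and apply $x^{-1}$ to the right of both sides; this strips the trailing $x$ from each word $(xxyy)^\ell x$, turning it into $(xxyy)^\ell$, and leaves the words of the form $(xxyy)^\ell xxy$ untouched. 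The one point requiring care is the behavior of the $q$-commutator bracket under $vx^{-1}$: since $vx^{-1}$ is $\mF$-linear and the bracket $[\,\cdot\,,\cdot\,]_q$ is bilinear, we have $[a,b]_q x^{-1} = qab\,x^{-1} - q^{-1}ba\,x^{-1}$, and I must check that $(ab)x^{-1}$ and $(ba)x^{-1}$ produce the right $q$-shift. This is where the change from $[\,,\,]_q$ in Proposition \ref{prop:commutators5} to $[\,,\,]_{q^2}$ here comes from: when one applies $x^{-1}$ to a $q$-shuffle product $u \star v$, the trailing letter of $v$ contributes a power of $q$ via the $\langle\,,\,\rangle$-pairing, and tracking this factor precisely converts the $q$-bracket into a $q^2$-bracket (or, depending on sign conventions, leaves it unchanged up to an overall scalar that cancels on both sides).

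The key steps, in order, are: (1) recall from \eqref{eq:dalternating3}, \eqref{eq:dalternating2} that $(xxyy)^\ell x = W_{-k} \star (\text{something})$ is not needed directly — instead work at the level of words and note that for any $v\in\mV$ ending in the letter $x$, writing $v=v' x$ in the standard basis gives $v x^{-1}$ cleanly; (2) verify the operator identity $(u \star v)x^{-1} = u \star (v x^{-1}) + (\text{correction})$ — more precisely I would use the Leibniz-type rule for $x^{-1}$ with respect to $\star$ that is implicit in \cite{PT} and in the proof of Proposition \ref{prop:W-*tG}, which says $(u \star v)\,x^{-1}$ decomposes according to whether the removed final letter came from $u$ or from $v$, each term carrying the appropriate $q^{\langle x, \cdot\rangle}$ weight; (3) apply this to both sides of \eqref{eq:commutators5.1}, using that the right factor $(xxyy)^jx$ (resp.\ $(xxyy)^ix$) ends in $x$ while $(xxyy)^i$ (resp.\ $(xxyy)^j$) and $(xxyy)^jxxy$ end in $y$, so only one of the two terms in each product survives after $x^{-1}$; (4) collect the surviving $q$-powers and read off \eqref{eq:commutators6.1}. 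Steps (2)--(4) for \eqref{eq:commutators6.2}, \eqref{eq:commutators6.3}, \eqref{eq:commutators6.4} are identical after swapping the roles of $x\leftrightarrow y$ and left $\leftrightarrow$ right as appropriate, and I would simply write ``the remaining relations are proved in a similar way.''

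The main obstacle is step (2): pinning down the exact form of the Leibniz rule for $x^{-1}$ acting on a $\star$-product of words and, in particular, getting the power of $q$ right so that the $q$-bracket in Proposition \ref{prop:commutators5} upgrades to the $q^2$-bracket here rather than to some other $q^c$-bracket. I expect the cleanest route is not to prove a general Leibniz formula but to argue directly on the words: every word appearing (with nonzero coefficient) on either side of \eqref{eq:commutators5.1} ends in $x$ or in $y$, and applying $x^{-1}$ annihilates all the $y$-terminal words and removes the last letter from the $x$-terminal ones; then one matches the resulting linear combinations against the expansions of $[(xxyy)^i,(xxyy)^jxxy]_{q^2}$ and $[(xxyy)^j,(xxyy)^ixxy]_{q^2}$ obtained directly from the definition of $\star$. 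Since the $q^2$-bracket is exactly what one gets from the relations \eqref{eq:xcommutators3.1}, \eqref{eq:xcommutators4.1} governing how a single letter $x$ attaches to these doubly alternating words, this matching is forced, and the symmetry of the right-hand side in $i,j$ is inherited directly from that of \eqref{eq:commutators5.1}.
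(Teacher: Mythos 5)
Your proposal matches the paper's proof: the paper likewise obtains \eqref{eq:commutators6.1} by applying $x^{-1}$ to the right of both sides of \eqref{eq:commutators5.1} (and handles the other three relations similarly), and your accounting of how the trailing-letter rule for $\star$ converts the $q$-bracket into a $q^2$-bracket is exactly the mechanism that makes this work.
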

\begin{proof}
To obtain \eqref{eq:commutators6.1}, on both sides of \eqref{eq:commutators5.1} apply $x^{-1}$ to the right. 

\medskip
\noindent The remaining relations can be proved in a similar way. 
\end{proof}

\begin{proposition}\rm\label{prop:commutators7}
Let $i,j \in \mN$. The following relations hold in $U$. 

\begin{equation}\label{eq:commutators7.1}
[(xxyy)^ix,(xxyy)^jxx]_{q^2}=[(xxyy)^jx,(xxyy)^ixx]_{q^2}, 
\end{equation}
\begin{equation}\label{eq:commutators7.2}
[(xxyy)^ixx,x(yyxx)^j]_{q^2}=[(xxyy)^jxx,x(yyxx)^i]_{q^2}, 
\end{equation}
\begin{equation}\label{eq:commutators7.3}
[(yyxx)^iy,(yyxx)^jyy]_{q^2}=[(yyxx)^jy,(yyxx)^iyy]_{q^2}, 
\end{equation}
\begin{equation}\label{eq:commutators7.4}
[(yyxx)^iyy,y(xxyy)^j]_{q^2}=[(yyxx)^jyy,y(xxyy)^i]_{q^2}. 
\end{equation}
\end{proposition}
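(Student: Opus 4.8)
The plan is to derive each identity in this proposition from an earlier identity in Appendix B by applying one of the derivation operators $x^{-1}$, $y^{-1}$ from Definition \ref{def:derivatives}, exactly in the spirit of Propositions \ref{prop:commutators2}--\ref{prop:commutators6}. First I observe that all four relations here are \emph{symmetric} in $i$ and $j$, so it is enough in each case to produce a single equation whose two sides, after swapping $i \leftrightarrow j$, become the two sides of the target. I will work with \eqref{eq:commutators7.1} in detail and indicate that the other three are obtained by the symmetry $x \leftrightarrow y$ together with reversal of words (which interchanges the families $(xxyy)^n xx \leftrightarrow (yyxx)^n yy$ and $(xxyy)^n x \leftrightarrow (yyxx)^n y$, and converts left-derivations into right-derivations).

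The key step is to identify the right ancestor and the right operator. The words on the two sides of \eqref{eq:commutators7.1} end in $\dots xx\,x = \dots xxx$? No: $(xxyy)^i x$ ends in $x$ and $(xxyy)^j xx$ ends in $xx$; so a natural ancestor is an identity whose corresponding words end in one more $x$, from which applying $x^{-1}$ \emph{to the right} strips that final $x$ and lands on \eqref{eq:commutators7.1}. Tracking backward through Appendix B: applying $x^{-1}$ on the right to \eqref{eq:commutators6.2} turns $xyy(xxyy)^i$ into $xyy(xxyy)^{i-1}xxy$ and $(xxyy)^j$ into $(xxyy)^{j-1}xxy$, which is not yet the shape we want, so instead I will start from one of the plain-commutator identities in Proposition \ref{prop:commutators3} or \ref{prop:commutators4} and reconcile the $q$-commutator via the identity $[u,v]_{q^2}=q^2 u\star v-q^{-2}v\star u$ together with $[u,v]=u\star v-v\star u$. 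Concretely, I expect \eqref{eq:commutators7.1} to follow by applying $x^{-1}$ on the right to \eqref{eq:commutators7.2} (after first establishing \eqref{eq:commutators7.2}), since $x(yyxx)^j x^{-1}=x(yyxx)^{j-1}yyx\,x^{-1}=\dots$; I will pin down the exact chain during the write-up. The cleanest route is likely: establish \eqref{eq:commutators7.2} from a Section~5 expression of $(xxyy)^i xx$ and $x(yyxx)^j$ as polynomials in alternating words (via \eqref{eq:conv1.3} and \eqref{eq:conv2.3}) combined with the $q$-commutator relations $[W_{-i},G_j]_q=[W_{-j},G_i]_q$ from Appendix A, then obtain \eqref{eq:commutators7.1}, \eqref{eq:commutators7.3}, \eqref{eq:commutators7.4} by applying $x^{-1}$ on the right and by the $x \leftrightarrow y$ / reversal symmetry.

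So the steps, in order, are: (1) record the algebraic identities $[u,v]_{q^2}=q^2u\star v-q^{-2}v\star u$ and the right-derivation Leibniz-type behaviour of $x^{-1},y^{-1}$ on the relevant families of words, i.e. $\big((xxyy)^i xx\big)x^{-1}$, $\big(x(yyxx)^j\big)x^{-1}$, and so on; (2) prove \eqref{eq:commutators7.2} by expanding both sides using \eqref{eq:conv1.3}, \eqref{eq:conv2.3} and invoking the $q$-commutator symmetry $[W_{-i},G_j]_q=[W_{-j},G_i]_q$ from Appendix A, since the polynomial coefficients in $t^{2i},t^{2j}$ match after the swap; (3) apply $x^{-1}$ to the right of both sides of \eqref{eq:commutators7.2}, using Lemma \ref{lem:eqcond} to conclude the resulting identity, and check that it is precisely \eqref{eq:commutators7.1}; (4) obtain \eqref{eq:commutators7.3} and \eqref{eq:commutators7.4} from \eqref{eq:commutators7.1} and \eqref{eq:commutators7.2} respectively by the symmetry that swaps $x$ with $y$ and reverses words, which also swaps left- and right-derivations and preserves the form of all relations in Appendix A.

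The main obstacle I anticipate is step (2)--(3): keeping the bookkeeping of which generating-function coefficient in $W^-(-t)\star G(t)$ versus $G(t)\star W^-(-t)$ produces which doubly alternating word, and making sure that after stripping the final letter with $x^{-1}$ (applied on the \emph{right}) the $q$-commutator bracket is the one claimed (with parameter $q^2$, not $q$ or $1$). The derivation operators do not interact with $\star$ by a naive Leibniz rule, so I will instead verify the right-derivative images directly on the monomial basis of the two relevant word-families, which is a short finite check. Once \eqref{eq:commutators7.2} is in hand and this derivative computation is done, the rest is immediate from Lemma \ref{lem:eqcond} and the $x\leftrightarrow y$ symmetry.
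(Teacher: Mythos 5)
You have the right general idea---these identities do come from earlier Appendix B relations by applying the operators of Definition \ref{def:derivatives} on the right---but your proposal never lands on the correct ancestor, and the one concrete chain you do commit to breaks down. The paper obtains \eqref{eq:commutators7.1} in one step by applying $x^{-1}$ on the right to \eqref{eq:commutators1.3}, i.e.\ to $[(xxyy)^ixx,(xxyy)^jxx]=0$. Indeed, since both words end in $x$ and $\langle x,(xxyy)^jxx\rangle=4$, the last-letter recursion for $\star$ gives
\begin{equation*}
\bigl((xxyy)^ixx \star (xxyy)^jxx\bigr)x^{-1}=(xxyy)^ixx \star (xxyy)^jx+q^4\,(xxyy)^ix \star (xxyy)^jxx,
\end{equation*}
and subtracting the analogous expansion of the reversed product, then dividing by $q^2$, yields exactly $[(xxyy)^ix,(xxyy)^jxx]_{q^2}=[(xxyy)^jx,(xxyy)^ixx]_{q^2}$. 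Your text circles around Propositions \ref{prop:commutators3}, \ref{prop:commutators4}, and \ref{prop:commutators6} as candidate ancestors and defers the choice (``I will pin down the exact chain during the write-up''), which is the missing idea rather than a completed step.

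The chain you do propose cannot work as stated. You plan to prove \eqref{eq:commutators7.2} first and then obtain \eqref{eq:commutators7.1} by applying $x^{-1}$ on the right; but $\bigl(x(yyxx)^j\bigr)x^{-1}=x(yyxx)^{j-1}yyx=xyy(xxyy)^{j-1}x$, which is a word from \eqref{eq:dalternating4}, not $(xxyy)^jxx$ or $(xxyy)^jx$. So the resulting identity involves the wrong family of words and is not \eqref{eq:commutators7.1}. Moreover, your step (2) for \eqref{eq:commutators7.2} --- expanding $(xxyy)^ixx$ and $x(yyxx)^j$ via \eqref{eq:conv1.3} and \eqref{eq:conv2.3} and ``invoking'' $[W_{-i},G_j]_q=[W_{-j},G_i]_q$ --- is not an argument: a $q^2$-commutator of two $\star$-polynomials does not reduce to $q$-commutators of their factors by any bilinearity, and no mechanism for the reduction is given. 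The repair is to take \eqref{eq:commutators1.3} (and its three companions in Proposition \ref{prop:commutators1}, or other already-established relations) as the starting points and perform the one-letter right-derivative computation above for each of the four identities.
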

\begin{proof}
To obtain \eqref{eq:commutators7.1}, on both sides of \eqref{eq:commutators1.3} apply $x^{-1}$ to the right. 

\medskip
\noindent The remaining relations can be proved in a similar way. 
\end{proof}

\begin{proposition}\rm\label{prop:commutators8}
Let $i,j \in \mN$. The following relations hold in $U$. 

\begin{equation}\label{eq:commutators8.1}
[(xxyy)^ixxy,(xxyy)^{j+1}xx]_{q^2}=[(xxyy)^jxxy,(xxyy)^{i+1}xx]_{q^2}, 
\end{equation}
\begin{equation}\label{eq:commutators8.2}
[(xxyy)^{i+1}xx,yxx(yyxx)^j]_{q^2}=[(xxyy)^{j+1}xx,yxx(yyxx)^i]_{q^2}, 
\end{equation}
\begin{equation}\label{eq:commutators8.3}
[(yyxx)^iyyx,(yyxx)^{j+1}yy]_{q^2}=[(yyxx)^jyyx,(yyxx)^{i+1}yy]_{q^2}, 
\end{equation}
\begin{equation}\label{eq:commutators8.4}
[(yyxx)^{i+1}yy,xyy(xxyy)^j]_{q^2}=[(yyxx)^{j+1}yy,xyy(xxyy)^i]_{q^2}. 
\end{equation}
\end{proposition}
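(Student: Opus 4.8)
The plan is to follow exactly the template established by Propositions \ref{prop:commutators1}--\ref{prop:commutators7}: identify a prior commutator identity of the form ``$[\,\cdot\,,\cdot\,]_{\text{?}}$ is symmetric under $i \leftrightarrow j$'', then apply a suitable composition of the derivative operators $x^{-1}$, $y^{-1}$ from Definition \ref{def:derivatives} to the right of both sides. The key point is Lemma \ref{lem:eqcond}: since these operators are linear and commute with left multiplication in the free algebra (more precisely, $(u\star v)a^{-1}$ for a letter $a$ can be controlled via the recursion for $\star$), and since an equality of elements of $\mV$ can be detected by applying $x^{-1}$ and $y^{-1}$, one produces new valid identities by differentiating known ones. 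In particular I would start from \eqref{eq:commutators7.1}, namely $[(xxyy)^ix,(xxyy)^jxx]_{q^2}=[(xxyy)^jx,(xxyy)^ixx]_{q^2}$, and multiply on the right by $(xxyy)^{j}$ on the left-hand side $\ldots$ no: the cleaner route is to recognize that the words appearing in \eqref{eq:commutators8.1}, namely $(xxyy)^ixxy$ and $(xxyy)^{j+1}xx$, arise from words in an earlier relation by \emph{removing} a trailing $y$ or a trailing $xx$; so I should instead read the operators in the \emph{forward} direction, i.e.\ the earlier relation is the one whose words, after right-multiplication by $y^{-1}$ or by $(xx)^{-1}=x^{-1}x^{-1}$ composed appropriately, give the current ones. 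The natural candidate is a symmetric-in-$(i,j)$ identity among words of the shape $(xxyy)^i xxyxx$ and $(xxyy)^{j+1}xxyy$-type; but such an identity is not in the list. Therefore the practical plan is the reverse: \emph{build up} from \eqref{eq:commutators7.1} by applying a right-insertion.

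Concretely: the operator $a^{-1}$ applied on the right strips a trailing letter $a$; there is no ``inverse'' operator, so to go from a shorter word to a longer one we cannot simply differentiate. Instead I note that the chain in Appendix B already proceeds by differentiation from \eqref{eq:commutators1.1} and \eqref{eq:commutators1.3} downward, so Proposition \ref{prop:commutators8} must likewise be obtained by differentiating something. The source relation should be one in which the two words are $(xxyy)^ixxyxx$ and $(xxyy)^{j+1}xxyy$, or more plausibly $(xxyy)^{i+1}xx = (xxyy)^ixxyyxx$ paired with $(xxyy)^jxxy$: indeed \eqref{eq:commutators8.1} has $(xxyy)^{j+1}xx = (xxyy)^j xxyyxx$, and $(xxyy)^jxxyyxx$ with a trailing $xx$ removed is $(xxyy)^jxxyy = (xxyy)^{j+1}$. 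So the plan is: start from a symmetric identity whose words are $(xxyy)^ixxy$ and $(xxyy)^{j+1}$, paired in a $q^2$-commutator — that is exactly \eqref{eq:commutators6.1}, $[(xxyy)^i,(xxyy)^jxxy]_{q^2}=[(xxyy)^j,(xxyy)^ixxy]_{q^2}$, after relabeling, or better \eqref{eq:commutators8.1} comes from \eqref{eq:commutators7.1} by inserting $yy$. Rather than guess further, the honest statement of the plan is: \textbf{First}, locate in the list of already-proved relations (Appendix B, Propositions \ref{prop:commutators1}--\ref{prop:commutators7}) the unique relation $R$ such that applying one of $x^{-1}$, $y^{-1}$, $x^{-1}y^{-1}$, $y^{-1}y^{-1}$ on the right to both sides of $R$ yields \eqref{eq:commutators8.1}; by inspection of the trailing segments ($(xxyy)^{j+1}xx$ ends in $\ldots yyxx$, while $(xxyy)^{j+1}$ ends in $\ldots xxyy$), the right move is to take $R = $ \eqref{eq:commutators7.1} and apply $x^{-1}$ on the right is wrong direction; so instead the correct source is the relation with trailing word $(xxyy)^{j+1}xxyy = (xxyy)^{j+2}$ — i.e.\ \eqref{eq:commutators6.1} or \eqref{eq:commutators2.1} — differentiated by $y^{-1}$ then $y^{-1}$. \textbf{Second}, verify that the derivative operator interacts correctly with the $q^2$-commutator, i.e.\ that $\bigl([u,v]_{q^2}\bigr)a^{-1} = [u a^{-1}, v]_{q^2}$ when only $v$ ends in the letter $a$ (or the analogous one-sided statement), which holds because $a^{-1}$ on the right is additive and, for words not ending in $a$, annihilates them. \textbf{Third}, apply Lemma \ref{lem:eqcond} to conclude.

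The main obstacle is purely bookkeeping: matching up which of the four equations \eqref{eq:commutators8.1}--\eqref{eq:commutators8.4} descends from which earlier relation via which operator string, and checking in each case that the $q^2$-commutator (and not a plain or $q$-commutator) survives the differentiation with the stated exponent. This is delicate because applying $x^{-1}$ on the right to $(xxyy)^jxxy$ gives $0$ (it ends in $y$), so one must route through relations where \emph{both} words end in the letter being stripped, or use that stripping a letter that appears on only one side still produces a valid — though possibly degenerate — identity; the paper's template in Propositions \ref{prop:commutators5}--\ref{prop:commutators7} shows this is routine once the right source relation is pinned down. I would present the derivation of \eqref{eq:commutators8.1} in detail (from the appropriate relation among \eqref{eq:commutators1.3}, \eqref{eq:commutators2.1}--\eqref{eq:commutators7.4} by a two-step application of $x^{-1}$ and $y^{-1}$ on the right), and then state that \eqref{eq:commutators8.2}, \eqref{eq:commutators8.3}, \eqref{eq:commutators8.4} follow by the same method, in keeping with the style of the appendix.
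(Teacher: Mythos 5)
Your meta-strategy --- obtain the proposition by applying the right-derivative operators $x^{-1},y^{-1}$ to an earlier Appendix B relation and invoking Lemma \ref{lem:eqcond} --- is the paper's, but your proposal never commits to a correct derivation, and the candidates you do settle on are wrong. The paper obtains \eqref{eq:commutators8.1} by applying $x^{-1}y^{-1}$ to the right of \eqref{eq:commutators7.1}, invoked with indices $i+1,j+1$. That index shift resolves the ``wrong direction'' worry that derails you: stripping does shorten words, but the source is taken at higher indices, and in the surviving term only one factor is stripped --- $(xxyy)^{j+1}xx$ passes through untouched while $(xxyy)^{i+1}x$ loses its trailing $x$ and then a trailing $y$ to become $(xxyy)^ixxy$. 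Your final candidates, \eqref{eq:commutators6.1} or \eqref{eq:commutators2.1} followed by $y^{-1}y^{-1}$, do not produce \eqref{eq:commutators8.1}: stripping $y$ twice from a product of two words that both end in $y$ generates, besides the wanted term, cross terms such as $(xxyy)^ixx\star(xxyy)^{j+1}xxy$, which do not occur in \eqref{eq:commutators8.1}.

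The second gap is your ``interaction'' step. The rule you assert, that $([u,v]_{q^2})a^{-1}=[ua^{-1},v]_{q^2}$ when only $v$ ends in $a$, is misstated (the stripped factor should be $v$) and, more importantly, is not a formal consequence of additivity plus annihilation. From the recursion for $\star$, for words $u=u_1\cdots u_r$ and $v=v_1\cdots v_s$ one has $(u\star v)a^{-1}=\delta_{v_s,a}\,u\star(va^{-1})+\delta_{u_r,a}\,q^{c}\,(ua^{-1})\star v$ with $c=\langle a,v_1\rangle+\cdots+\langle a,v_s\rangle$, and the analogous formula for $(v\star u)a^{-1}$; the exponent $c$ depends on the letter content of the other factor and must be tracked. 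It happens that for the words at hand the corrections from the $x^{-1}$ step and the $y^{-1}$ step cancel (here $c=4$ and then $-4$), which is why the $q^2$-commutator of \eqref{eq:commutators7.1} passes unchanged to \eqref{eq:commutators8.1}; but that is a computation, not something that ``survives differentiation'' automatically. To repair the argument: prove the displayed product rule, apply $x^{-1}$ and then $y^{-1}$ to both sides of $[(xxyy)^{i+1}x,(xxyy)^{j+1}xx]_{q^2}=[(xxyy)^{j+1}x,(xxyy)^{i+1}xx]_{q^2}$, check that the only surviving terms assemble into $[(xxyy)^ixxy,(xxyy)^{j+1}xx]_{q^2}$ and its $i\leftrightarrow j$ partner, and treat \eqref{eq:commutators8.2}--\eqref{eq:commutators8.4} analogously.
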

\begin{proof}
To obtain \eqref{eq:commutators8.1}, on both sides of \eqref{eq:commutators7.1} apply $x^{-1}y^{-1}$ to the right. 

\medskip
\noindent The remaining relations can be proved in a similar way. 
\end{proof}

\begin{proposition}\rm\label{prop:commutators9}
Let $i,j \in \mN$. The following relations hold in $U$. 

\begin{equation}\label{eq:commutators9.1}
[(xxyy)^i,(xxyy)^jx]_{q^2}=[(xxyy)^j,(xxyy)^ix]_{q^2}, 
\end{equation}
\begin{equation}\label{eq:commutators9.2}
[x(yyxx)^i,(yyxx)^j]_{q^2}=[x(yyxx)^j,(yyxx)^i]_{q^2}, 
\end{equation}
\begin{equation}\label{eq:commutators9.3}
[(yyxx)^i,(yyxx)^jy]_{q^2}=[(yyxx)^j,(yyxx)^iy]_{q^2}, 
\end{equation}
\begin{equation}\label{eq:commutators9.4}
[y(xxyy)^i,(xxyy)^j]_{q^2}=[y(xxyy)^j,(xxyy)^i]_{q^2}. 
\end{equation}
\end{proposition}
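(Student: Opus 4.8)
The plan is to deduce \eqref{eq:commutators9.1}--\eqref{eq:commutators9.4} from relations already established in this appendix by applying the one-sided derivation operators $v \mapsto vx^{-1}$, $v \mapsto vy^{-1}$, $v \mapsto x^{-1}v$, $v \mapsto y^{-1}v$ of Definition \ref{def:derivatives}, exactly as in Propositions \ref{prop:commutators2}--\ref{prop:commutators8}. Concretely, I would obtain \eqref{eq:commutators9.1} by applying $y^{-1}$ and then $x^{-1}$ on the right to both sides of \eqref{eq:commutators6.1}; the three remaining relations are handled in the same way, starting from \eqref{eq:commutators6.2}--\eqref{eq:commutators6.4} (equivalently, via the evident $x \leftrightarrow y$ and left/right symmetries of the construction) and applying the appropriate two-letter composite on the appropriate side.

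The computational engine is a Leibniz-type rule for how $v \mapsto vz^{-1}$ (for a letter $z$) interacts with the $q$-shuffle product. Writing, for a nontrivial word $v$, $\hat v$ for $v$ with its last letter deleted and $v_\bullet$ for that last letter, and setting $\langle c,v\rangle := \sum_i \langle c,v_i\rangle$, the recursion for $\star$ gives
\[
u \star v = (u \star \hat v)\,v_\bullet + (\hat u \star v)\,u_\bullet\, q^{\langle u_\bullet, v\rangle} .
\]
Reading off the last letters of the words on the right, one gets: if $u_\bullet = v_\bullet = z$ then $(u \star v)z^{-1} = u \star (vz^{-1}) + q^{\langle z,v\rangle}(uz^{-1}) \star v$; if only $v_\bullet = z$ then $(u \star v)z^{-1} = u \star (vz^{-1})$; if only $u_\bullet = z$ then $(u \star v)z^{-1} = q^{\langle z,v\rangle}(uz^{-1}) \star v$; and otherwise $(u \star v)z^{-1} = 0$. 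An analogous rule, obtained from the other recursion for $\star$, holds for $v \mapsto z^{-1}v$ in terms of first letters.

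Applying this to \eqref{eq:commutators6.1}: the words $(xxyy)^i$, $(xxyy)^jxxy$, $(xxyy)^j$, $(xxyy)^ixxy$ all end in $y$, so stripping $y$ on the right from $(xxyy)^i \star (xxyy)^jxxy$ yields $(xxyy)^i \star (xxyy)^jxx$ together with a cross term $q^{\langle y,(xxyy)^jxxy\rangle}\,(xxyy)^{i-1}xxy \star (xxyy)^jxxy$; the latter ends in $y$ throughout and is annihilated by the subsequent $x^{-1}$, while $(xxyy)^i \star (xxyy)^jxx$ (left factor ending in $y$, right factor in $x$) becomes $(xxyy)^i \star (xxyy)^jx$ under $x^{-1}$. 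The parallel computation on $(xxyy)^jxxy \star (xxyy)^i$ produces $(xxyy)^jx \star (xxyy)^i$, the relevant weight being $q^{\langle y,(xxyy)^i\rangle} = q^{\langle x,(xxyy)^i\rangle} = q^0 = 1$. Hence the composite operator passes through the scalars $q^{\pm 2}$ of \eqref{eq:commutators6.1} with the two terms of each commutator acquiring coefficient exactly $1$, and the output is precisely the $q^2$-commutator identity \eqref{eq:commutators9.1}.

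The one point requiring care — and where a degree or sign error would creep in — is this bookkeeping of the $q$-weights together with the verification that the ``wrong-ending'' cross terms are killed by the second operator; it is exactly this that preserves a $q^2$-commutator as a $q^2$-commutator, in contrast to the passage \eqref{eq:commutators1.3} $\to$ \eqref{eq:commutators7.1}, where the surviving weight $q^{\langle x,(xxyy)^jxx\rangle} = q^4$ upgrades an ordinary commutator to a $q^2$-commutator. Once the displayed rule is in hand, each of \eqref{eq:commutators9.1}--\eqref{eq:commutators9.4} reduces to a short direct check, so the written proof can be as brief as ``apply $y^{-1}x^{-1}$ on the right to \eqref{eq:commutators6.1}, and similarly for the rest''. (Alternatively, one could substitute the expansions of $(xxyy)^n$ and $(xxyy)^nx$ in the alternating words from Section 5 into both sides and invoke the relations of Appendix A, but the derivative-operator route is considerably shorter.)
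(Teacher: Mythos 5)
Your proof is correct and uses the same derivative-operator technique as the paper's; the only difference is the starting point — the paper obtains \eqref{eq:commutators9.1} by applying $x^{-1}x^{-1}$ on the right to \eqref{eq:commutators7.1}, whereas you apply $y^{-1}x^{-1}$ on the right to \eqref{eq:commutators6.1}. Your Leibniz rule for $v \mapsto vz^{-1}$ and the weight bookkeeping (in particular $\langle x,(xxyy)^i\rangle=\langle y,(xxyy)^i\rangle=0$, so the surviving cross terms carry coefficient $1$ and the $q^2$-commutator is preserved) are accurate, so both routes land on the same identity.
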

\begin{proof}
To obtain \eqref{eq:commutators9.1}, on both sides of \eqref{eq:commutators7.1} apply $x^{-1}x^{-1}$ to the right. 

\medskip
\noindent The remaining relations can be proved in a similar way. 
\end{proof}

\begin{proposition}\rm\label{prop:commutators10}
Let $i,j \in \mN$. The following relations hold in $U$. 

\begin{equation}\label{eq:commutators10.1}
[(xxyy)^ixxy,(xxyy)^{j+1}x]_q=[(xxyy)^jxxy,(xxyy)^{i+1}x]_q, 
\end{equation}
\begin{equation}\label{eq:commutators10.2}
[x(yyxx)^{i+1},yxx(yyxx)^j]_q=[x(yyxx)^{j+1},yxx(yyxx)^i]_q, 
\end{equation}
\begin{equation}\label{eq:commutators10.3}
[(yyxx)^iyyx,(yyxx)^{j+1}y]_q=[(yyxx)^jyyx,(yyxx)^{i+1}y]_q, 
\end{equation}
\begin{equation}\label{eq:commutators10.4}
[y(xxyy)^{i+1},xyy(xxyy)^j]_q=[y(xxyy)^{j+1},xyy(xxyy)^i]_q. 
\end{equation}
\end{proposition}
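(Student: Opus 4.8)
The plan is to derive \eqref{eq:commutators10.1}--\eqref{eq:commutators10.4} from Proposition~\ref{prop:commutators8} by applying the derivative operators of Definition~\ref{def:derivatives}, exactly in the spirit of the other proofs in Appendix~B. The tool I would use is the Leibniz-type rule for the $q$-shuffle product that is built into the recursion of Section~2 (and made explicit in \cite{PT}): for words $u=u_1\cdots u_r$ and $v=v_1\cdots v_s$ in $\mV$ (all products being $q$-shuffle products),
\begin{align*}
(u \star v)x^{-1} &= u \star (vx^{-1}) + q^{\langle x,v\rangle}(ux^{-1}) \star v, \\
x^{-1}(u \star v) &= (x^{-1}u) \star v + q^{\langle x,u\rangle}\,u \star (x^{-1}v),
\end{align*}
where $\langle x,w\rangle$ abbreviates $\sum_{i}\langle x,w_i\rangle$, with the convention $wx^{-1}=0$ (resp.\ $x^{-1}w=0$) when $w$ does not end (resp.\ begin) in $x$, and with the analogous identities for $y^{-1}$. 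Observe that for any word $w$ with $2m$ letters $x$ and $2m-1$ letters $y$ one has $\langle x,w\rangle=2$, and this is the only exponent of $q$ that will arise.

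First I would handle \eqref{eq:commutators10.1}. Write $A=(xxyy)^ixxy$ and $B=(xxyy)^{j+1}xx$, so the left-hand side of \eqref{eq:commutators8.1} is $q^{2}A\star B-q^{-2}B\star A$, and apply $x^{-1}$ on the right to both sides. Since $A$ ends in $y$ and $B$ ends in $x$, the Leibniz rule collapses to $(A\star B)x^{-1}=A\star B'$ and $(B\star A)x^{-1}=q^{2}B'\star A$ with $B'=Bx^{-1}=(xxyy)^{j+1}x$; hence the left-hand side becomes $q^{2}A\star B'-B'\star A=q\,[A,B']_{q}$. The right-hand side of \eqref{eq:commutators8.1} transforms identically, and cancelling the common factor $q$ gives \eqref{eq:commutators10.1}. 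For \eqref{eq:commutators10.2} I would instead apply $x^{-1}$ on the left to \eqref{eq:commutators8.2}; the one extra ingredient is the word identity
\[
x^{-1}\big((xxyy)^{i+1}xx\big)=xyy(xxyy)^{i}xx=x(yyxx)^{i+1},
\]
which reduces to $yy(xxyy)^{i}xx=(yyxx)^{i+1}$ (an immediate induction on $i$). Since the other word $yxx(yyxx)^{j}$ begins in $y$, exactly one term of each Leibniz expansion survives, governed again by the exponent $2$, and the $q^{2}$-commutator of \eqref{eq:commutators8.2} is converted into $q$ times the $q$-commutator of \eqref{eq:commutators10.2}. Finally, \eqref{eq:commutators10.3} and \eqref{eq:commutators10.4} follow from \eqref{eq:commutators8.3} and \eqref{eq:commutators8.4} by the mirror computations under $x\leftrightarrow y$: apply $y^{-1}$ on the right to \eqref{eq:commutators8.3}, and $y^{-1}$ on the left to \eqref{eq:commutators8.4}, using the mirror identity $yxx(yyxx)^{i}yy=y(xxyy)^{i+1}$. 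All four resulting relations involve only $q$-commutators of doubly alternating words, which lie in $U$, so they hold in $U$.

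There is no deep obstacle here; the work is entirely bookkeeping. The two points needing a little care are (a) selecting, for each of the four relations, the correct derivative operator and the correct side---right for \eqref{eq:commutators10.1} and \eqref{eq:commutators10.3}, left for \eqref{eq:commutators10.2} and \eqref{eq:commutators10.4}---so that exactly one term of each Leibniz expansion survives and the ambient commutator drops from $q^{2}$ to $q$; and (b) recognizing the normal forms $xyy(xxyy)^{i}xx=x(yyxx)^{i+1}$ and $yxx(yyxx)^{i}yy=y(xxyy)^{i+1}$, so that the output is presented in the standard parametrization of Example~\ref{ex:dalternating}. As a safeguard against a sign or exponent slip, one can re-verify each identity by the method of Lemma~\ref{lem:eqcond}, applying $x^{-1}$ and $y^{-1}$ once more and comparing the two sides term by term.
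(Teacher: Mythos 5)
Your proof is correct and follows the same method as the paper: derive each identity by applying a one-letter derivative operator from Definition \ref{def:derivatives} to an earlier relation in Appendix B, using the Leibniz-type rule and the fact that exactly one term survives. The only difference is the choice of precursor --- the paper obtains \eqref{eq:commutators10.1} by applying $y^{-1}$ on the right of \eqref{eq:commutators9.1}, whereas you apply $x^{-1}$ (on the right or left as appropriate) to \eqref{eq:commutators8.1}--\eqref{eq:commutators8.4}; both routes are equally valid and your exponent and normal-form bookkeeping checks out.
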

\begin{proof}
To obtain \eqref{eq:commutators10.1}, on both sides of \eqref{eq:commutators9.1} apply $y^{-1}$ to the right. 

\medskip
\noindent The remaining relations can be proved in a similar way. 
\end{proof}

\begin{proposition}\rm\label{prop:commutators11}
Let $i,j \in \mN$. The following relations hold in $U$. 

\begin{equation}\label{eq:commutators11.1}
[(xxyy)^ixx,(xxyy)^{j+1}x]=[(xxyy)^jxx,(xxyy)^{i+1}x], 
\end{equation}
\begin{equation}\label{eq:commutators11.2}
[x(yyxx)^{i+1},xx(yyxx)^j]=[x(yyxx)^{j+1},xx(yyxx)^i], 
\end{equation}
\begin{equation}\label{eq:commutators11.3}
[(yyxx)^iyy,(yyxx)^{j+1}y]=[(yyxx)^jyy,(yyxx)^{i+1}y], 
\end{equation}
\begin{equation}\label{eq:commutators11.4}
[y(xxyy)^{i+1},yy(xxyy)^j]=[y(xxyy)^{j+1},yy(xxyy)^i]. 
\end{equation}
\end{proposition}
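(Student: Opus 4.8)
The plan is to deduce Proposition~\ref{prop:commutators11} from Proposition~\ref{prop:commutators10} by applying one of the derivative operators from Definition~\ref{def:derivatives} to the right, exactly as the preceding propositions in this appendix are chained together. The key observation is that all four identities in Proposition~\ref{prop:commutators11} are ordinary commutator relations $[\,\cdot\,,\,\cdot\,]$ whereas those in Proposition~\ref{prop:commutators10} are $q$-commutator relations $[\,\cdot\,,\,\cdot\,]_q$, and the two doubly alternating words in each slot of Proposition~\ref{prop:commutators10} end in the letter that, when stripped off by the appropriate $x^{-1}$ or $y^{-1}$, converts the $q$-commutator into the plain commutator and lands us on Proposition~\ref{prop:commutators11}. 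Concretely, for \eqref{eq:commutators11.1} I would take \eqref{eq:commutators10.1}, both of whose entries $(xxyy)^ixxy$ and $(xxyy)^{j+1}x$ are words, and apply $y^{-1}$ to the right: since the $q$-shuffle product interacts with right-derivatives in a controlled way (the operators $vx^{-1}$, $vy^{-1}$ are the ones used throughout Section~5 and Appendix~B for precisely this kind of transfer), the factor $q^{\pm1}$ built into $[\,\cdot\,,\,\cdot\,]_q$ gets absorbed and one is left with $[(xxyy)^ixx,(xxyy)^{j+1}x]$, yielding \eqref{eq:commutators11.1}. The remaining three identities follow by the same manoeuvre applied to \eqref{eq:commutators10.2}, \eqref{eq:commutators10.3}, \eqref{eq:commutators10.4} respectively, choosing $x^{-1}$ versus $y^{-1}$ according to which letter terminates the relevant words.

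In more detail, the mechanism rests on the fact that for words $u,v$ in $\mV$ one has a simple relation expressing $(u\star v)a^{-1}$ in terms of $(u\star(va^{-1}))$ and $((ua^{-1})\star v)$ with a $q$-power weight coming from $\langle\,,\,\rangle$, directly from the recursive clause $u\star v=(u\star(v_1\cdots v_{s-1}))v_s+((u_1\cdots u_{r-1})\star v)u_rq^{\langle u_r,v_1\rangle+\cdots+\langle u_r,v_s\rangle}$. When the two words entering a $q$-commutator both end in the same letter $a$ and neither $ua^{-1}$ nor $va^{-1}$ vanishes, applying $a^{-1}$ on the right to $[u,v]_q=quv-q^{-1}vu$ (with $\star$ suppressed) produces $q(u\star(va^{-1}))+q^{-1}(\text{cross term})-q^{-1}(v\star(ua^{-1}))-q(\text{cross term})$, and the cross terms cancel in pairs because the $q$-weights attached to the final letter $a$ are symmetric, leaving $q^{\pm1}$-free terms, i.e.\ an ordinary commutator of the shortened words. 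This is the same bookkeeping already invoked (implicitly) in the proofs of Propositions~\ref{prop:commutators2}--\ref{prop:commutators10}, so I would simply cite the pattern: \emph{``To obtain \eqref{eq:commutators11.1}, on both sides of \eqref{eq:commutators10.1} apply $y^{-1}$ to the right. The remaining relations can be proved in a similar way.''}

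The main obstacle is verifying that the derivative operator genuinely carries a $q$-commutator to a plain commutator in each of the four cases — that is, checking that the letters terminating the four pairs of words are arranged so that stripping them is compatible with the shift from $[\,\cdot\,,\,\cdot\,]_q$ to $[\,\cdot\,,\,\cdot\,]$, and that no term is annihilated prematurely (e.g.\ one must confirm $(xxyy)^ixxy\,y^{-1}=(xxyy)^ixx$ and $(xxyy)^{j+1}x\,y^{-1}=0$ do not both happen — indeed the first survives and we genuinely want the second to be handled by the antisymmetric structure of the identity, since \eqref{eq:commutators10.1} is itself an equality of two such $q$-commutators rather than a single commutator set to zero). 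Because Proposition~\ref{prop:commutators10} is stated as an \emph{equality} between two $q$-commutators (not as ``$[\cdot,\cdot]_q=$ something''), applying $y^{-1}$ to both sides is unconditionally legitimate and produces an equality between the two corresponding plain commutators; the only thing to confirm is that $(\,[u,v]_q\,)y^{-1}=[uy^{-1},vy^{-1}]$ up to a common scalar when $u$ and $v$ are among the words in question, which is the routine $q$-weight cancellation sketched above. Once that compatibility is pinned down for \eqref{eq:commutators11.1}, the cases \eqref{eq:commutators11.2}--\eqref{eq:commutators11.4} are obtained by the obvious symmetry $x\leftrightarrow y$ together with the reversal symmetry of the doubly alternating words, so no new computation is needed.
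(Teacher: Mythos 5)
Your proposal is correct and is exactly the paper's proof: apply $y^{-1}$ on the right to both sides of \eqref{eq:commutators10.1} to get \eqref{eq:commutators11.1}, and similarly for the other three. One small clarification: since the two words in each $q$-commutator of Proposition \ref{prop:commutators10} end in \emph{different} letters, the passage from $[\,\cdot\,,\,\cdot\,]_q$ to $[\,\cdot\,,\,\cdot\,]$ comes not from cancelling cross terms but from the fact that $y^{-1}$ annihilates one word in each product and attaches the single weight $q^{\langle y,v\rangle}=q^{-2}$ to the other, turning $q(\,\cdot\,)-q^{-1}(\,\cdot\,)$ into $q^{-1}$ times a plain commutator on both sides.
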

\begin{proof}
To obtain \eqref{eq:commutators11.1}, on both sides of \eqref{eq:commutators10.1} apply $y^{-1}$ to the right. 

\medskip
\noindent The remaining relations can be proved in a similar way. 
\end{proof}

\section{Appendix C}
Recall the formulas \eqref{eq:conv3.3}--\eqref{eq:conv3.2}, \eqref{eq:solve4.1&4.2}, \eqref{eq:solve4.3&4.4}. These formulas give new relations in the alternating words. In this appendix, we give alternative proofs to them without using the doubly alternating words. 

\begin{proposition}\rm\label{prop:W-*tGapp}
For $n \in \mN$, 
\begin{equation}\label{eq:conv3.3app}
\sum_{k=0}^{2n}(-1)^kW_{-k} \star \tG_{2n-k}=\sum_{k=0}^{2n}(-1)^k\tG_{2n-k} \star W_{-k}, 
\end{equation}
\begin{equation}\label{eq:conv3.1app}
q^{-1}\sum_{k=0}^{2n+1}(-1)^kW_{-k} \star \tG_{2n+1-k}=q\sum_{k=0}^{2n+1}(-1)^k\tG_{2n+1-k} \star W_{-k}. 
\end{equation}
\end{proposition}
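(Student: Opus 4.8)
The goal is to prove Proposition~\ref{prop:W-*tGapp}, which asserts the two ``reversal'' identities \eqref{eq:conv3.3app} and \eqref{eq:conv3.1app} relating sums $\sum (-1)^k W_{-k}\star \tG_{2n-k}$ with their reversed counterparts $\sum (-1)^k \tG_{2n-k}\star W_{-k}$, without appealing to the doubly alternating words. The plan is to mimic the inductive bootstrap used in the proof of Proposition~\ref{prop:conv1}, carrying both identities together by a double application of Lemma~\ref{lem:eqcond}. More precisely, for $v_1,v_2\in\mV$ we reduce an equality $v_1=v_2$ to checking $v_1 x^{-1}=v_2 x^{-1}$ and $v_1 y^{-1}=v_2 y^{-1}$ (or the left-sided versions). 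First I would set, for each $n$,
\[
L_{2n}=\sum_{k=0}^{2n}(-1)^kW_{-k}\star\tG_{2n-k},\qquad R_{2n}=\sum_{k=0}^{2n}(-1)^k\tG_{2n-k}\star W_{-k},
\]
and similarly $L_{2n+1},R_{2n+1}$ for the odd case. The base case $n=0$ is immediate since $W_0\star\tG_0=\tG_0\star W_0=x$. For the inductive step I would show that $L_m-R_m$ is detected to be zero by applying the one-letter derivatives: because $\tG_j=(xy)^j$ ends in $y$ and $W_{-k}=(xy)^kx$ ends in $x$, applying $x^{-1}$ or $y^{-1}$ on the appropriate side turns a $\star$-product of alternating words into a $\star$-product of strictly shorter alternating words, with explicit $q$-power coefficients coming from the recursive rules for $\star$ in Section~2 (the exponents $\langle u,v\rangle$). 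This is precisely the mechanism already exploited in \eqref{eq:conv1i1}--\eqref{eq:conv1ii3} and in Proposition~\ref{prop:W-*tG}.

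The concrete steps, in order, are: (1) fix the notation above and dispose of $n=0$; (2) apply $x^{-1}$ on the right to $L_{2n}$ and to $R_{2n}$: using Definition~\ref{def:derivatives} together with the recursion for $\star$, one gets $L_{2n}x^{-1}$ and $R_{2n}x^{-1}$ each equal to a linear combination of $\tG_k\star\tG_{2n-k}$ terms, and these agree by the commutativity $[\tG_{i+1},\tG_{j+1}]=0$ from Appendix~A (Proposition~\ref{prop:conv1odd}'s argument); (3) apply $y^{-1}$ on the right to both sides: this produces combinations of $W_{-k}\star W_{k-2n+1}$ terms which, reindexed, is an odd-index alternating sum and vanishes by \eqref{eq:conv1.3odd} (or by the same palindrome trick), so both $L_{2n}y^{-1}$ and $R_{2n}y^{-1}$ are zero; (4) conclude $L_{2n}=R_{2n}$ by Lemma~\ref{lem:eqcond}. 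For the odd identity \eqref{eq:conv3.1app} the same scheme applies, except one must carry along the factors $q^{\pm1}$: applying $x^{-1}$ to $q^{-1}L_{2n+1}$ and to $qR_{2n+1}$ should again land on matching combinations of $\tG$-products (now summing to an odd-index alternating $\tG$-sum, which is $0$ by \eqref{eq:conv1.1odd}), while applying $y^{-1}$ lands on even-index $W$-products whose matching is exactly the already-established identity \eqref{eq:conv3.3app}. So the two identities are proved in tandem: \eqref{eq:conv3.3app} feeds the $y^{-1}$-reduction of \eqref{eq:conv3.1app}, completing the induction.

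The step I expect to be the genuine obstacle is bookkeeping the $q$-powers in step (2)/(3) correctly, i.e.\ verifying that the coefficients produced when $x^{-1}$ (resp.\ $y^{-1}$) is pushed through a $\star$-product $W_{-k}\star\tG_{2n-k}$ match, after reversal, those produced from $\tG_{2n-k}\star W_{-k}$ — and in the odd case that the global normalizing factors $q^{-1}$ and $q$ are exactly the ones that make the two sides line up. This is the same kind of ``routine computation'' flagged in Proposition~\ref{prop:conv1}, but here it must be done symmetrically on the left and right. A clean way to organize it is to observe that the reversal (word-reversal) anti-automorphism of the free algebra, combined with the symmetry $\langle u,v\rangle=\langle v,u\rangle$ of the bilinear datum, sends $W_{-k}\mapsto W_{-k}$ and $\tG_j\mapsto G_j$ up to the $x\leftrightarrow y$ swap, so one can alternatively deduce \eqref{eq:conv3.3app} and \eqref{eq:conv3.1app} from the already-available two-sided statements in Proposition~\ref{prop:W-*tG} — but since the point of Appendix~C is to avoid the doubly alternating words, I would keep the direct inductive argument and simply be careful with the exponents. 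Everything else (the base case, the appeals to Lemma~\ref{lem:eqcond}, and the vanishing of odd-index alternating sums) is immediate from results stated earlier.
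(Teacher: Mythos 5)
Your outline is essentially correct, but it takes a genuinely different route from the paper. The paper's Appendix~C proof of Proposition~\ref{prop:W-*tGapp} is a two-line citation: identity (43) of \cite{ter_alternating} gives $\sum_{k=0}^{n-1}(-1)^k\bigl([W_{-k},\tG_{2n-k}]+[\tG_{k+1},W_{k+1-2n}]\bigr)=0$, and re-indexing the second family by $k\mapsto 2n-1-k$ turns this into \eqref{eq:conv3.3app}; likewise \eqref{eq:conv3.1app} is a rearrangement of (49) there. You instead run the derivative-operator machine of Section~5: apply $x^{-1}$ and $y^{-1}$ on the right and invoke Lemma~\ref{lem:eqcond}. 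This does go through, and in fact more simply than you present it --- no induction is needed. For \eqref{eq:conv3.3app} the two $x^{-1}$-images are $\sum(-1)^k\tG_k\star\tG_{2n-k}$ and $\sum(-1)^k\tG_{2n-k}\star\tG_k$, equal by re-indexing, while the two $y^{-1}$-images are $\sum_{k=0}^{2n-1}(-1)^kW_{-k}\star W_{k-2n+1}$ and $q^{-2}$ times its reversal, both zero by the palindrome argument of Proposition~\ref{prop:conv1odd}. For \eqref{eq:conv3.1app} the $x^{-1}$-images both vanish by \eqref{eq:conv1.1odd}, and the $y^{-1}$-images are $q^{-1}\sum_{k=0}^{2n}(-1)^kW_{-k}\star W_{k-2n}$ and $q\cdot q^{-2}\sum_{k=0}^{2n}(-1)^kW_{k-2n}\star W_{-k}$, where the $q^{-2}$ comes from $\langle y,\cdot\rangle$ summed over the letters of $W_{-k}$; these coincide under $k\mapsto 2n-k$, which is exactly what the normalizations $q^{\mp 1}$ are for. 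The one slip in your plan is the claim that the $y^{-1}$-reduction of the odd identity ``is exactly the already-established identity \eqref{eq:conv3.3app}'': it is not --- it lands on a sum of $W\star W$ terms rather than $W\star\tG$ terms, whose matching is a pure re-indexing and needs no input from \eqref{eq:conv3.3app} --- so the two identities do not feed each other and your proposed induction collapses to two independent direct verifications. What your route buys is a self-contained proof from Proposition~\ref{prop:conv1odd} plus the commutativity relations in Appendix~A; what the paper's route buys is brevity, since the required telescoping identities are already recorded in \cite{ter_alternating}. Both respect the Appendix~C constraint of avoiding the doubly alternating words.
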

\begin{proof}
By \cite[(43)]{ter_alternating} we have 
\begin{equation*}
\sum_{k=0}^{n-1}(-1)^k\left([W_{-k},\tG_{2n-k}]+[\tG_{k+1},W_{k+1-2n}]\right)=0. 
\end{equation*}

\noindent Rearranging the terms yields \eqref{eq:conv3.3app}. 

\medskip
\noindent Similarly \eqref{eq:conv3.1app} follows from \cite[(49)]{ter_alternating}. 
\end{proof}

\begin{proposition}\rm\label{prop:W-*Gapp}
For $n \in \mN$, 
\begin{equation}\label{eq:conv2.3app}
\sum_{k=0}^{2n}(-1)^kW_{-k} \star G_{2n-k}=\sum_{k=0}^{2n}(-1)^kG_{2n-k} \star W_{-k}, 
\end{equation}
\begin{equation}\label{eq:conv2.2app}
q\sum_{k=0}^{2n+1}(-1)^kW_{-k} \star G_{2n+1-k}=q^{-1}\sum_{k=0}^{2n+1}(-1)^kG_{2n+1-k} \star W_{-k}. 
\end{equation}
\end{proposition}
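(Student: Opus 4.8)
The statement to prove is Proposition \ref{prop:W-*Gapp}, which gives two identities among alternating words:
\begin{equation*}
\sum_{k=0}^{2n}(-1)^kW_{-k} \star G_{2n-k}=\sum_{k=0}^{2n}(-1)^kG_{2n-k} \star W_{-k},
\end{equation*}
\begin{equation*}
q\sum_{k=0}^{2n+1}(-1)^kW_{-k} \star G_{2n+1-k}=q^{-1}\sum_{k=0}^{2n+1}(-1)^kG_{2n+1-k} \star W_{-k}.
\end{equation*}

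The plan is to mimic the proof of Proposition \ref{prop:W-*tGapp} exactly, but starting from the Terwilliger commutator relations for the pair $(W_{-i}, G_j)$ rather than $(W_{-i}, \tG_j)$. The relevant relations from Appendix A are the $q$-commutator identities $[W_{-i},G_j]_q=[W_{-j},G_i]_q$ and $[G_i,W_{j+1}]_q=[G_j,W_{i+1}]_q$. First I would take the first of these, $[W_{-i},G_j]_q=[W_{-j},G_i]_q$, set $i=k$ and $j=2n-k$ (respectively $j=2n+1-k$), multiply by $(-1)^k$, and sum over the appropriate range of $k$. On the right-hand side one re-indexes $k \mapsto 2n-k$ (respectively $k \mapsto 2n+1-k$); in the even case the sign $(-1)^{2n-k}=(-1)^k$ is preserved so the two sums telescope against each other and collapse, and in the odd case the sign $(-1)^{2n+1-k}=-(-1)^k$ flips, which is precisely what pairs with the asymmetry in the $q$ versus $q^{-1}$ prefactors. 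Expanding each $q$-commutator $[X,Y]_q = qX\star Y - q^{-1}Y\star X$ and collecting, the cancellation among the re-indexed terms leaves exactly the claimed equality between $\sum (-1)^k W_{-k}\star G_{2n-k}$ and $\sum (-1)^k G_{2n-k}\star W_{-k}$ (with the stated powers of $q$ in the odd case).

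Concretely, for the even-degree identity \eqref{eq:conv2.3app}, I would write $S = \sum_{k=0}^{2n}(-1)^k\big(q\,W_{-k}\star G_{2n-k} - q^{-1}G_{2n-k}\star W_{-k}\big)$ using $[W_{-k},G_{2n-k}]_q$, observe via the symmetry relation that the same sum equals $\sum_{k=0}^{2n}(-1)^k[W_{-(2n-k)},G_k]_q = \sum_{k=0}^{2n}(-1)^k\big(q\,W_{k-2n}\star G_k - q^{-1}G_k\star W_{k-2n}\big)$, and then re-index the latter by $k \mapsto 2n-k$. Matching the two expressions for $S$, the $q$ and $q^{-1}$ terms pair up so that $q\sum (-1)^k W_{-k}\star G_{2n-k}$ on one side equals $q\sum (-1)^k G_{2n-k}\star W_{-k}$ on the other after cancellation; dividing by $q$ gives \eqref{eq:conv2.3app}. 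The odd case \eqref{eq:conv2.2app} works the same way except that the sign flip $(-1)^{2n+1-k}=-(-1)^k$ under re-indexing produces the mismatched prefactors $q$ and $q^{-1}$. Since Appendix A of this paper already records these $(W_-,G)$ commutator relations (cited there from \cite[Propositions 5.10 and 5.11]{ter_alternating}), and since the whole argument is the same bookkeeping as in Proposition \ref{prop:W-*tGapp} with $\tG$ replaced by $G$, I would phrase the proof by simply pointing to the relevant numbered equation from \cite{ter_alternating} (the analogue of \cite[(43)]{ter_alternating} and \cite[(49)]{ter_alternating} for the $G$-variables) and saying "rearranging the terms yields the claim."

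The only genuine obstacle is purely clerical: making sure the index shifts and the sign and $q$-power bookkeeping are done correctly, in particular tracking which of the two Appendix A relations ($[W_{-i},G_j]_q=[W_{-j},G_i]_q$ or $[G_i,W_{j+1}]_q=[G_j,W_{i+1}]_q$) to invoke so that the degrees $k-2n$ and $2n-k$ match the subscripts appearing in Proposition \ref{prop:W-*G}. There is no conceptual difficulty, and no new input beyond the commutation/symmetry identities already collected in Appendix A; the proof is a one-line citation followed by "rearranging the terms," exactly parallel to Proposition \ref{prop:W-*tGapp}.
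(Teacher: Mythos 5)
Your overall strategy---derive both identities from the Terwilliger symmetry relations collected in Appendix A by pairing the term indexed by $k$ with the term indexed by $(\text{degree})-k$---is the same in spirit as the paper's proof, which cites the analogues of \cite[(43)]{ter_alternating} and \cite[(49)]{ter_alternating} for the pair $(W_-,G)$ and ``rearranges the terms.'' Your treatment of the odd-degree identity \eqref{eq:conv2.2app} is correct: that identity says $\sum_{k=0}^{2n+1}(-1)^k[W_{-k},G_{2n+1-k}]_q=0$, and since $[W_{-k},G_{2n+1-k}]_q=[W_{k-2n-1},G_k]_q$ while $(-1)^{2n+1-k}=-(-1)^k$, the terms indexed by $k$ and $2n+1-k$ cancel in pairs.

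Your even-degree argument, however, has a genuine gap. You invoke the $q$-commutator symmetry $[W_{-i},G_j]_q=[W_{-j},G_i]_q$ and then re-index by $k\mapsto 2n-k$; but because the top index $2n$ is even, $(-1)^{2n-k}=(-1)^k$ and the re-indexed sum is \emph{literally identical} to the sum you started with. ``Matching the two expressions for $S$'' therefore yields only $S=S$, and no bookkeeping of the $q$ versus $q^{-1}$ terms can extract \eqref{eq:conv2.3app} from a tautology. Indeed, \eqref{eq:conv2.3app} is equivalent to the vanishing of the \emph{plain} commutator sum $\sum_{k=0}^{2n}(-1)^k[W_{-k},G_{2n-k}]$, and the $q$-commutator symmetry by itself carries no information about plain commutators. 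The relation you need here is the plain-commutator symmetry $[W_{-i},G_{j+1}]=[W_{-j},G_{i+1}]$ from Appendix A: the $k=2n$ term vanishes because $G_0=\m1$, and for $0\le k\le 2n-1$ the relation with $i=k$, $j=2n-1-k$ gives $[W_{-k},G_{2n-k}]=[W_{-(2n-1-k)},G_{k+1}]$, so the terms indexed by $k$ and $2n-1-k$ carry opposite signs and cancel. With that substitution (plain commutator symmetry for the even case, $q$-commutator symmetry for the odd case) your argument goes through and coincides with the paper's.
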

\begin{proof}
Similar to the proof of Proposition \ref{prop:W-*tGapp}. 
\end{proof}

\begin{proposition}\rm\label{prop:W+*tGapp}
For $n \in \mN$, 
\begin{equation}\label{eq:conv2.4app}
\sum_{k=0}^{2n}(-1)^kW_{k+1} \star \tG_{2n-k}=\sum_{k=0}^{2n}(-1)^k\tG_{2n-k} \star W_{k+1}, 
\end{equation}
\begin{equation}\label{eq:conv2.1app}
q\sum_{k=0}^{2n+1}(-1)^kW_{k+1} \star \tG_{2n+1-k}=q^{-1}\sum_{k=0}^{2n+1}(-1)^k\tG_{2n+1-k} \star W_{k+1}. 
\end{equation}
\end{proposition}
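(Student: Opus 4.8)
The plan is to follow the template of the proof of Proposition \ref{prop:W-*tGapp}, with the alternating words $W_{k+1}$ and the Appendix A relations involving them playing the role of the words $W_{-k}$. First I would rewrite the two claimed identities as vanishing statements. Since the product in $U$ is $\star$, the relation \eqref{eq:conv2.4app} is equivalent to
\[
\sum_{k=0}^{2n}(-1)^k[W_{k+1},\tG_{2n-k}]=0,
\]
and, after moving all terms to one side and using $[X,Y]_q=qXY-q^{-1}YX$, the relation \eqref{eq:conv2.1app} is equivalent to
\[
\sum_{k=0}^{2n+1}(-1)^k[W_{k+1},\tG_{2n+1-k}]_q=0.
\]
So it suffices to prove these two identities.

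For the odd-length identity I would use the Appendix A relation $[W_{i+1},\tG_j]_q=[W_{j+1},\tG_i]_q$, which holds for all $i,j\in\mN$ with the convention $\tG_0=\m1$. Applying it with $i=k$ and $j=2n+1-k$ shows that the $k$-th summand $[W_{k+1},\tG_{2n+1-k}]_q$ equals the $(2n+1-k)$-th summand, i.e.\ the summand is invariant under the substitution $k\mapsto 2n+1-k$. This substitution is a fixed-point-free involution of $\{0,1,\dots,2n+1\}$, and it multiplies the sign $(-1)^k$ by $-1$ because $2n+1$ is odd; hence re-indexing the sum by it shows the sum equals its own negative, and therefore vanishes.

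For the even-length identity I would use instead the Appendix A relation $[W_{i+1},\tG_{j+1}]=[W_{j+1},\tG_{i+1}]$. Here the boundary term $k=2n$ equals $[W_{2n+1},\tG_0]=[W_{2n+1},\m1]=0$ and is discarded first; for $0\le k\le 2n-1$ one writes $2n-k=j+1$ with $j=2n-1-k\in\mN$ and applies the relation to obtain $[W_{k+1},\tG_{2n-k}]=[W_{2n-k},\tG_{k+1}]$, which is the $(2n-1-k)$-th summand of the truncated sum. As before, the substitution $k\mapsto 2n-1-k$ is a fixed-point-free involution of $\{0,1,\dots,2n-1\}$ that flips $(-1)^k$, so the truncated sum equals minus itself and hence is $0$.

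I expect the argument to be almost entirely bookkeeping; the only points needing care are the two sign computations (which hinge on the parities of $2n+1$ and $2n-1$) and, in the even case, peeling off the boundary term $k=2n$ before invoking $[W_{i+1},\tG_{j+1}]=[W_{j+1},\tG_{i+1}]$, whose right-hand side never produces $\tG_0$. No input beyond the relations of Appendix A is needed, and in particular the doubly alternating words do not appear, as is the point of Appendix C. One could alternatively, exactly as in the proof of Proposition \ref{prop:W-*tGapp}, quote a single packaged sum identity from \cite{ter_alternating} in place of the term-by-term cancellation above.
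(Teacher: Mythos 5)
Your proof is correct and follows essentially the same route as the paper: the paper's proof (given by reference to Proposition \ref{prop:W-*tGapp}) cites packaged alternating-sum identities from \cite{ter_alternating}, whereas you re-derive the same cancellation from the pairwise symmetry relations $[W_{i+1},\tG_{j+1}]=[W_{j+1},\tG_{i+1}]$ and $[W_{i+1},\tG_j]_q=[W_{j+1},\tG_i]_q$ of Appendix A via the sign-reversing involutions $k\mapsto 2n-1-k$ and $k\mapsto 2n+1-k$. Your reductions to $\sum_k(-1)^k[W_{k+1},\tG_{2n-k}]=0$ and $\sum_k(-1)^k[W_{k+1},\tG_{2n+1-k}]_q=0$, the handling of the boundary term $[W_{2n+1},\tG_0]=0$, and the parity bookkeeping all check out.
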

\begin{proof}
Similar to the proof of Proposition \ref{prop:W-*tGapp}. 
\end{proof}

\begin{proposition}\rm\label{prop:W+*Gapp}
For $n \in \mN$, 
\begin{equation}\label{eq:conv3.4app}
\sum_{k=0}^{2n}(-1)^kW_{k+1} \star G_{2n-k}=\sum_{k=0}^{2n}(-1)^kG_{2n-k} \star W_{k+1}, 
\end{equation}
\begin{equation}\label{eq:conv3.2app}
q^{-1}\sum_{k=0}^{2n+1}(-1)^kW_{k+1} \star G_{2n+1-k}=q\sum_{k=0}^{2n+1}(-1)^kG_{2n+1-k} \star W_{k+1}. 
\end{equation}
\end{proposition}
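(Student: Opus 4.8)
**Proof proposal for Proposition \ref{prop:W+*Gapp}.**

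The plan is to mirror the argument given for Proposition \ref{prop:W-*tGapp}, locating the relevant identities from \cite{ter_alternating} and rearranging their terms. First I would establish \eqref{eq:conv3.4app}. The left-hand side minus the right-hand side is a telescoping-type sum of commutators $[W_{k+1},G_{2n-k}]$ taken with alternating signs; pairing the $k$-th term with the $(2n-k)$-th term (which swaps the roles of the two indices) collapses the sum into an expression of the form $\sum_{k}(-1)^k\big([W_{k+1},G_{2n-k}] - [W_{2n-k+1},G_k]\big)$ over roughly half the range. By the symmetry relation $[W_{i+1},G_{j+1}]=[W_{j+1},G_{i+1}]$ recorded in Appendix A, each such paired difference vanishes, so the two sides of \eqref{eq:conv3.4app} are equal. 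Alternatively, and more in the spirit of the cited proof, one can read off a single identity from \cite{ter_alternating} — of the shape $\sum_{k=0}^{n-1}(-1)^k\big([W_{k+1},G_{2n-k}]+[G_{k+1},W_{k+1-2n}]\big)=0$ or its analogue — and simply redistribute the commutators.

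For \eqref{eq:conv3.2app} I would use the $q$-commutator relations in Appendix A, specifically $[G_i,W_{j+1}]_q=[G_j,W_{i+1}]_q$. Expanding $q^{-1}W_{k+1}\star G_{2n+1-k}$ and $qG_{2n+1-k}\star W_{k+1}$ and regrouping, the difference of the two sides in \eqref{eq:conv3.2app} becomes a signed sum of terms $q[G_j,W_{i+1}]_q - q[G_i,W_{j+1}]_q$ over suitable index pairs $(i,j)$ with $i+j$ fixed, which cancels termwise by that symmetry. The bookkeeping here is slightly more delicate than in the even case because the asymmetric powers of $q$ on the two sides must be absorbed into the $q$-commutator bracket rather than the ordinary one; this is exactly the phenomenon already seen in \eqref{eq:conv3.1app}, so the pattern is known. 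Once the correct identity from \cite{ter_alternating} is identified (the cited proof uses \cite[(43)]{ter_alternating} and \cite[(49)]{ter_alternating} for the $\tG$ case; here the $G$-analogues of those equations are what is needed), the proof reduces to rearrangement.

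The main obstacle, such as it is, is purely clerical: correctly matching each of \eqref{eq:conv3.4app}, \eqref{eq:conv3.2app} to the precise numbered equation in \cite{ter_alternating} that produces it, and verifying that the powers of $q$ and the signs line up under the index reflection $k \mapsto 2n-k$ (even case) or $k \mapsto 2n+1-k$ (odd case). There is no genuine mathematical difficulty beyond what was already handled in Propositions \ref{prop:W-*tGapp}--\ref{prop:W+*tGapp}; indeed, since \eqref{eq:conv3.4} and \eqref{eq:conv3.2} have already been proved in Section 5 via the doubly alternating words, the content of this proposition is only that the same equalities follow directly from the known alternating-word relations, so I would write: "Similar to the proof of Proposition \ref{prop:W-*tGapp}," after pointing to the two relevant equations of \cite{ter_alternating}.
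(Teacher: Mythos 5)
Your overall strategy --- reduce each identity to a signed sum of (q\nobreakdash-)commutators and kill it with the symmetry relations for the alternating words --- is sound, and it is essentially what the paper intends by ``similar to the proof of Proposition \ref{prop:W-*tGapp}'' (quote the $(W_{k+1},G)$-analogues of \cite[(43)]{ter_alternating} and \cite[(49)]{ter_alternating} and rearrange); your fallback paragraph identifies exactly this. Your treatment of the odd case \eqref{eq:conv3.2app} is correct: the difference of the two sides equals $-\sum_{k=0}^{2n+1}(-1)^k[G_{2n+1-k},W_{k+1}]_q$, and the reflection $k\mapsto 2n+1-k$ reverses the sign $(-1)^k$ while, by $[G_i,W_{j+1}]_q=[G_j,W_{i+1}]_q$, fixing the bracket, so the $2n+2$ terms cancel in pairs with no middle term.

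Your explicit cancellation argument for the even case \eqref{eq:conv3.4app}, however, does not work as written. The difference of the two sides is $\sum_{k=0}^{2n}(-1)^k[W_{k+1},G_{2n-k}]$, and since $2n$ is even the reflection $k\mapsto 2n-k$ \emph{preserves} the sign $(-1)^k$; your pairing therefore produces $(-1)^k\bigl([W_{k+1},G_{2n-k}]+[W_{2n-k+1},G_k]\bigr)$ --- a sum, not the difference you wrote --- together with an unpaired middle term $(-1)^n[W_{n+1},G_n]$, and none of this is killed by $[W_{i+1},G_{j+1}]=[W_{j+1},G_{i+1}]$: that relation swaps the two subscripts, sending $[W_{k+1},G_{2n-k}]$ to $[W_{2n-k},G_{k+1}]$, which is not the partner your pairing supplies. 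The correct pairing is $k\leftrightarrow 2n-1-k$: there the signs are opposite, the brackets $[W_{k+1},G_{2n-k}]$ and $[W_{2n-k},G_{k+1}]$ are equal by the symmetry, and the single leftover term at $k=2n$ is $[W_{2n+1},G_0]=[W_{2n+1},\m1]=0$. With that repair (or by simply citing the relevant summation identity from \cite{ter_alternating}, as the paper does) the proof goes through.
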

\begin{proof}
Similar to the proof of Proposition \ref{prop:W-*tGapp}. 
\end{proof}

\begin{proposition}\rm\label{prop:G*tGapp}
For $n \in \mN$, 
\begin{equation}\label{eq:G*tGapp}
\begin{split}
\sum_{k=0}^{2n+1}(-1)^kG_k \star \tG_{2n+1-k}&=(1-q^{-2})^{-1}\sum_{k=0}^{2n}(-1)^k[W_{k-2n},W_{k+1}]\\
&=\sum_{k=0}^{2n+1}(-1)^k\tG_{2n+1-k} \star G_k. 
\end{split}
\end{equation}
\end{proposition}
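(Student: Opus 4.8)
The plan is to establish the two equalities in \eqref{eq:G*tGapp} separately, using only the commutator relations among alternating words recorded in Appendix~A (equivalently, those of \cite{ter_alternating}).

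For the equality of the first and third expressions, observe that their difference is $\sum_{k=0}^{2n+1}(-1)^k[G_k,\tG_{2n+1-k}]$, where $[X,Y]=X\star Y-Y\star X$. The terms $k=0$ and $k=2n+1$ vanish because $G_0$ and $\tG_0$ are the trivial word. For $1\le k\le 2n$, the Appendix~A relation $[G_{i+1},\tG_{j+1}]=[G_{j+1},\tG_{i+1}]$ with $i=k-1$, $j=2n-k$ gives $[G_k,\tG_{2n+1-k}]=[G_{2n+1-k},\tG_k]$ termwise; substituting and then reindexing $k\mapsto 2n+1-k$ exhibits the resulting sum as the negative of the original, so both equal $0$ since $\mF$ has characteristic zero.

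For the middle equality, the decisive step is the pointwise identity
\[
G_{i+1}\star\tG_{j+1}-G_{j+1}\star\tG_{i+1}=(1-q^{-2})^{-1}\bigl([W_{-j},W_{i+2}]-[W_{-i},W_{j+2}]\bigr),\qquad i,j\in\mN.
\]
It is obtained by expanding the $q$-commutators in the Appendix~A relation $[G_{i+1},\tG_{j+1}]_q-[G_{j+1},\tG_{i+1}]_q=q[W_{-j},W_{i+2}]-q[W_{-i},W_{j+2}]$ and using $[G_{i+1},\tG_{j+1}]=[G_{j+1},\tG_{i+1}]$ to rewrite the difference of the two $q$-commutators as $(q-q^{-1})\bigl(G_{i+1}\star\tG_{j+1}-G_{j+1}\star\tG_{i+1}\bigr)$. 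I would then write the first expression in \eqref{eq:G*tGapp} as $\tG_{2n+1}-G_{2n+1}+\sum_{k=1}^{2n}(-1)^kG_k\star\tG_{2n+1-k}$, note (by reindexing $k\mapsto 2n+1-k$) that the last sum equals $\tfrac12\sum_{k=1}^{2n}(-1)^k\bigl(G_k\star\tG_{2n+1-k}-G_{2n+1-k}\star\tG_k\bigr)$, apply the pointwise identity with $i=k-1$, $j=2n-k$, and use the symmetry $[W_{-i},W_{j+1}]=[W_{-j},W_{i+1}]$ from Appendix~A to relabel indices. After this bookkeeping the sum collapses to $(1-q^{-2})^{-1}\bigl(\sum_{k=0}^{2n}(-1)^k[W_{k-2n},W_{k+1}]-[W_{-2n},W_1]\bigr)$.

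It then suffices to show $\tG_{2n+1}-G_{2n+1}=(1-q^{-2})^{-1}[W_{-2n},W_1]$, for adding this to the preceding expression cancels the stray term and yields \eqref{eq:G*tGapp}. Since $[W_{-2n},W_1]=[W_0,W_{2n+1}]=[x,W_{2n+1}]$ (the first step again by the symmetry relation), the claim is the short $q$-shuffle computation $[x,W_{2n+1}]=(1-q^{-2})(\tG_{2n+1}-G_{2n+1})$: inserting the letter $x$ into $W_{2n+1}=(yx)^{2n}y$ produces $\tG_{2n+1}$ (at the front), $G_{2n+1}$ (at the back), and $2n$ interior words each containing one occurrence of $xx$; in the two orders $x\star W_{2n+1}$ and $W_{2n+1}\star x$ each interior word receives the same coefficient $1+q^{-2}$, while $\tG_{2n+1}$ receives $1$ and $q^{-2}$ and $G_{2n+1}$ receives $q^{-2}$ and $1$, so all interior words cancel in $[x,W_{2n+1}]$. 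I expect the main obstacle to be purely organizational, namely choosing the index substitutions and the order of applications of the $W$-symmetry so that the telescoping in the previous paragraph produces exactly $\sum_{k=0}^{2n}(-1)^k[W_{k-2n},W_{k+1}]$ together with a single correctly normalized boundary term; once the right grouping is fixed, the argument is routine.
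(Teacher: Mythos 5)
Your argument is correct, and I verified the two nontrivial ingredients: the pointwise identity $G_{i+1}\star\tG_{j+1}-G_{j+1}\star\tG_{i+1}=(1-q^{-2})^{-1}\bigl([W_{-j},W_{i+2}]-[W_{-i},W_{j+2}]\bigr)$ does follow from expanding the $q$-commutators in the Appendix~A relation and using $[G_{i+1},\tG_{j+1}]=[G_{j+1},\tG_{i+1}]$, and the boundary evaluation $[x,W_{2n+1}]=(1-q^{-2})(\tG_{2n+1}-G_{2n+1})$ checks out directly from the definition of $\star$ (the two insertion positions adjacent to each existing $x$ both contribute the same interior word with total weight $1+q^{-2}$ in either order, so only the end terms survive). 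The skeleton is the same as the paper's --- prove the outer equality by antisymmetry, then convert the $q$-commutator identity for $[G_k,\tG_{2n+1-k}]_q$ into a statement about plain $\star$-products --- but the implementation differs: the paper invokes the already-summed identities (47), (52), and (37) of Terwilliger's paper, which are stated for the full index range including $G_0=\tG_0=\m1$, so no boundary terms arise and the proof is three lines of rearrangement. You work only from the termwise relations reproduced in Appendix~A, which hold for subscripts at least $1$; the price is the stray term $[W_{-2n},W_1]$ and the extra lemma $\tG_{2n+1}-G_{2n+1}=(1-q^{-2})^{-1}[W_{-2n},W_1]$ needed to absorb it. What your route buys is self-containedness relative to this paper's Appendix~A (no need to consult the exact summed forms in the reference); what it costs is the additional shuffle computation and heavier index bookkeeping. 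One small presentational caution: the factor $\tfrac12$ step uses that $\mF$ has characteristic zero, which you correctly flag, but it can be avoided entirely by pairing the terms $k$ and $2n+1-k$ instead of symmetrizing.
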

\begin{proof}
By \cite[(47)]{ter_alternating} we have 
\begin{equation*}
\sum_{k=0}^n(-1)^k\left([\tG_k,G_{2n+1-k}]+[G_k,\tG_{2n+1-k}]\right)=0. 
\end{equation*}

\noindent Rearranging the terms yields 
\begin{equation}\label{eq:G*tGapp1}
\sum_{k=0}^{2n+1}(-1)^kG_k \star \tG_{2n+1-k}=\sum_{k=0}^{2n+1}(-1)^k\tG_{2n+1-k} \star G_k. 
\end{equation}

\noindent By \cite[(52)]{ter_alternating} we have 
\begin{align*}
&\sum_{k=0}^{n-1}(-1)^{k+1}\left([G_{k+1},\tG_{2n-k}]_q-[G_{2n-k},\tG_{k+1}]_q\right)\\
&\hspace{8em}=q\sum_{k=0}^{n-1}(-1)^{k+1}\Big([W_{k+1-2n},W_{k+2}]-[W_{-k},W_{2n+1-k}]\Big). 
\end{align*}

\noindent Rearrange the terms and apply \cite[(37)]{ter_alternating}. This yields 
\begin{equation}\label{eq:G*tGapp2}
\sum_{k=0}^{2n+1}(-1)^k[G_k,\tG_{2n+1-k}]_q=q\sum_{k=0}^{2n}(-1)^k[W_{k-2n},W_{k+1}]. 
\end{equation}

\noindent The result follows from \eqref{eq:G*tGapp1} and \eqref{eq:G*tGapp2}. 
\end{proof}

\begin{proposition}\rm\label{prop:W+*W-app}
For $n \in \mN$, 
\begin{equation}\label{eq:W+*W-app}
\begin{split}
\sum_{k=0}^{2n+1}(-1)^kW_{k+1} \star W_{k-2n-1}&=(q^2-1)^{-1}\sum_{k=0}^{2n+2}(-1)^k[G_k,\tG_{2n+2-k}]\\
&=\sum_{k=0}^{2n+1}(-1)^kW_{k-2n-1} \star W_{k+1}. 
\end{split}
\end{equation}
\end{proposition}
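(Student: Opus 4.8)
**Proof proposal for Proposition \ref{prop:W+*W-app}.**

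The plan is to follow the template of Proposition \ref{prop:G*tGapp}, replacing each use of the odd-degree relations by the corresponding even-degree relations from \cite{ter_alternating}. First I would establish the outer equality
\begin{equation*}
\sum_{k=0}^{2n+1}(-1)^kW_{k+1} \star W_{k-2n-1}=\sum_{k=0}^{2n+1}(-1)^kW_{k-2n-1} \star W_{k+1}.
\end{equation*}
This should come from the symmetry relation $[W_{-i},W_{j+1}]=[W_{-j},W_{i+1}]$ in Appendix A, line (5). Indeed, writing $u \star v - v \star u = [u,v]$ and summing $[W_{k+1},W_{k-2n-1}]$ over $0 \le k \le 2n+1$ with the sign $(-1)^k$, one can pair the index $k$ with $2n+1-k$; the symmetry relation shows the paired terms cancel, so the alternating sum of commutators vanishes and the two one-sided convolutions agree. (This is exactly the mechanism used to get \eqref{eq:G*tGapp1}.)

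Next I would identify the middle expression $(q^2-1)^{-1}\sum_{k=0}^{2n+2}(-1)^k[G_k,\tG_{2n+2-k}]$ with the common value. For this I would invoke the relevant higher $q$-commutator identity from \cite{ter_alternating} — the even-degree analogue of \cite[(52)]{ter_alternating}, namely the relation in Appendix A line (16), $[\tG_{i+1},G_{j+1}]_q-[\tG_{j+1},G_{i+1}]_q=q[W_{j+1},W_{-i-1}]-q[W_{i+1},W_{-j-1}]$, together with line (14), $[G_{i+1},\tG_{j+1}]_q-[G_{j+1},\tG_{i+1}]_q=q[W_{-j},W_{i+2}]-q[W_{-i},W_{j+2}]$. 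Substituting $i+j=2n$ (or $2n-1$, depending on which index ranges are needed after shifting), multiplying by $(-1)^k$ and summing, I would rearrange to obtain
\begin{equation*}
\sum_{k=0}^{2n+2}(-1)^k[G_k,\tG_{2n+2-k}]_q=q\sum_{k=0}^{2n+1}(-1)^k[W_{k+1},W_{k-2n-1}],
\end{equation*}
just as \eqref{eq:G*tGapp2} was derived. Since $[W_{k+1},W_{k-2n-1}]=(1-q^{-2})\big(W_{k+1}\star W_{k-2n-1}\big)$ is not literally true, the correct bookkeeping is that the plain commutator $[G_k,\tG_{2n+2-k}]$ (not the $q$-commutator) appears on the right of the statement; one converts between $[\,\cdot\,,\cdot\,]_q$ and $[\,\cdot\,,\cdot\,]$ using that the symmetrized sum of $q$-commutators over the pairing $k \leftrightarrow 2n+2-k$ reduces to $(q-q^{-1})$ times a plain commutator sum, which finally yields the factor $(q^2-1)^{-1}$ after clearing $q$. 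Assembling the outer equality with this identification completes the proof.

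The main obstacle I anticipate is the index bookkeeping: the convolution sums here run to $2n+1$ and $2n+2$ while the cited relations \cite[(14),(16),(37)]{ter_alternating} are stated for free parameters $i,j \in \mN$, so one must carefully choose the substitution $i+j = 2n$ versus $2n+1$, track the off-by-one shifts between $W_{k+1}$ and $W_{k-2n-1}$, and make sure the boundary terms $k=0$ and $k=2n+1$ (where one of $G_k,\tG_{2n+2-k}$ degenerates to $\m1$ via the conventions $G_0=\tG_0=\m1$) are handled consistently. Once the correct index ranges are pinned down, the manipulation is the same routine rearrangement as in Proposition \ref{prop:G*tGapp}, so no genuinely new idea is required beyond selecting the even-degree counterparts of the relations used there.
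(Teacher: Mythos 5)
Your first step (the outer equality) is right and matches what the paper intends: pairing $k\leftrightarrow 2n+1-k$ and invoking $[W_{-i},W_{j+1}]=[W_{-j},W_{i+1}]$ shows $\sum_{k=0}^{2n+1}(-1)^k[W_{k+1},W_{k-2n-1}]=0$, hence the two convolution sums agree. The gap is in your second step. The middle term of the proposition is a \emph{plain} commutator sum $\sum(-1)^k[G_k,\tG_{2n+2-k}]$, so the relation you need is the one from Appendix A that expresses plain $G,\tG$ commutators in terms of $q$-commutators of the $W$'s, namely $[G_i,\tG_{j+1}]-[G_j,\tG_{i+1}]=q[W_{-j},W_{i+1}]_q-q[W_{-i},W_{j+1}]_q$ with $i+j=2n+1$. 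Summing that over $i=0,\dots,n$ with signs and reindexing gives
\begin{equation*}
\sum_{k=0}^{2n+2}(-1)^k[G_k,\tG_{2n+2-k}]=q\sum_{k=0}^{2n+1}(-1)^k[W_{k-2n-1},W_{k+1}]_q
=q\,(qS'-q^{-1}S)=(q^2-1)S,
\end{equation*}
where $S=\sum(-1)^kW_{k+1}\star W_{k-2n-1}=S'=\sum(-1)^kW_{k-2n-1}\star W_{k+1}$ by your step one. That is the whole argument, and it is the correct even-degree analogue of the paper's proof of Proposition \ref{prop:G*tGapp} (which, note, goes the other way: there the middle term is a $q$-commutator sum $\sum[G_k,\tG_{2n+1-k}]_q$, so the relation with $[G_{i+1},\tG_{j+1}]_q$ on the left is the right tool \emph{there}).

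The relations you actually propose to use, $[G_{i+1},\tG_{j+1}]_q-[G_{j+1},\tG_{i+1}]_q=q[W_{-j},W_{i+2}]-q[W_{-i},W_{j+2}]$ and its companion, point the wrong way: their right-hand sides are plain $[W,W]$ commutators, and the assembled alternating sum $\sum_{k=0}^{2n+1}(-1)^k[W_{k+1},W_{k-2n-1}]$ is exactly the quantity you proved to be zero in step one. So the identity you write down, $\sum(-1)^k[G_k,\tG_{2n+2-k}]_q=q\sum(-1)^k[W_{k+1},W_{k-2n-1}]$, has vanishing right-hand side and cannot produce the nonzero middle term. Your proposed repair also does not work: symmetrizing $q$-commutators over $k\leftrightarrow 2n+2-k$ gives $[A,B]_q+[B,A]_q=(q-q^{-1})(A\star B+B\star A)$, an anticommutator rather than a commutator, so there is no clean conversion from the $q$-commutator sum to the plain one along that route. (Minor point: your citations ``line (14)'' and ``line (16)'' do not match the Appendix A numbering, which has fourteen displayed relations.)
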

\begin{proof}
Similar to the proof of Proposition \ref{prop:G*tGapp}. 
\end{proof}

\section{Acknowledgement}
The majority of this paper was written while the author was a Math Ph.D. student at the University of Wisconsin-Madison. Most results in this paper appeared in the Ph.D. thesis \cite{mythesis} of the author. The author would like to thank his Ph.D. advisor, Paul Terwilliger, for many helpful discussions and comments on this paper.

\bigskip
\noindent Chenwei Ruan \\
Beijing Institute of Mathematical Sciences and Applications \\
No. 544, Hefangkou Village, Huaibei Town, Huairou District \\
Beijing, China 101408 \\
email: \href{mailto:ruanchenwei@bimsa.cn}{\nolinkurl{ruanchenwei@bimsa.cn}}

\begin{thebibliography}{10}

\bibitem{ariki}
S.~Ariki.
\newblock {\em Representations of quantum algebras and combinatorics of {Y}oung
  tableaux}, volume~26 of {\em University Lecture Series}.
\newblock American Mathematical Society, Providence, RI, japanese edition,
  2002.

\bibitem{baseilhac1}
P.~Baseilhac.
\newblock On the second realization for the positive part of
  {$U_q(\widehat{sl_2})$} of equitable type.
\newblock {\em Lett. Math. Phys.}, 112(1):Paper No. 2, 28, 2022,
  \href{https://arxiv.org/abs/2106.11706}{\nolinkurl{arXiv:2106.11706}}.

\bibitem{BS}
P.~Baseilhac and K.~Shigechi.
\newblock A new current algebra and the reflection equation.
\newblock {\em Lett. Math. Phys.}, 92:47--65, 2010,
  \href{https://arxiv.org/abs/0906.1482v2}{\nolinkurl{arXiv:0906.1482v2}}.

\bibitem{BCP}
J.~Beck, V.~Chari, and A.~Pressley.
\newblock An algebraic characterization of the affine canonical basis.
\newblock {\em Duke Math. J.}, 99(3):455--487, 1999,
  \href{https://arxiv.org/abs/math/9808060}{\nolinkurl{arXiv:math/9808060}}.

\bibitem{bittmann}
L.~Bittmann.
\newblock Asymptotics of standard modules of quantum affine algebras.
\newblock {\em Algebr. Represent. Theory}, 22(5):1209--1237, 2019,
  \href{https://arxiv.org/abs/1712.00355}{\nolinkurl{arXiv:1712.00355}}.

\bibitem{CP}
V.~Chari and A.~Pressley.
\newblock Quantum affine algebras.
\newblock {\em Comm. Math. Phys.}, 142(2):261--283, 1991.

\bibitem{DFJMN}
B.~Davies, O.~Foda, M.~Jimbo, T.~Miwa, and A.~Nakayashiki.
\newblock Diagonalization of the {$XXZ$} {H}amiltonian by vertex operators.
\newblock {\em Comm. Math. Phys.}, 151(1):89--153, 1993,
  \href{https://arxiv.org/abs/hep-th/9204064}{\nolinkurl{arXiv:hep-th/9204064}}.

\bibitem{FR}
E.~Frenkel and N.~Reshetikhin.
\newblock The {$q$}-characters of representations of quantum affine algebras
  and deformations of {$\sW$}-algebras.
\newblock In {\em Recent developments in quantum affine algebras and related
  topics ({R}aleigh, {NC}, 1998)}, volume 248 of {\em Contemp. Math.}, pages
  163--205. Amer. Math. Soc., Providence, RI, 1999,
  \href{https://arxiv.org/abs/math.QA/9810055}{\nolinkurl{arXiv:math.QA/9810055}}.

\bibitem{green}
J.~A. Green.
\newblock {\em Shuffle algebras, {L}ie algebras and quantum groups}, volume~9
  of {\em Textos de Matem\'{a}tica. S\'{e}rie B [Texts in Mathematics. Series
  B]}.
\newblock Universidade de Coimbra, Departamento de Matem\'{a}tica, Coimbra,
  1995.

\bibitem{IT2}
T.~Ito and P.~Terwilliger.
\newblock The {$q$}-tetrahedron algebra and its finite-dimensional irreducible
  modules.
\newblock {\em Comm. Algebra}, 35:3415--3439, 2007,
  \href{https://arxiv.org/abs/math/0602199}{\nolinkurl{arXiv:math/0602199}}.

\bibitem{jimbo}
M.~Jimbo.
\newblock A {$q$}-difference analogue of {$U({\mathfrak g})$} and the
  {Y}ang-{B}axter equation.
\newblock {\em Lett. Math. Phys.}, 10(1):63--69, 1985.

\bibitem{jing}
N.~Jing.
\newblock Symmetric polynomials and {$U_q(\widehat{\rm sl}{}_2)$}.
\newblock {\em Represent. Theory}, 4:46--63, 2000,
  \href{https://arxiv.org/abs/math/9902109}{\nolinkurl{arXiv:math/9902109}}.

\bibitem{JKKKY}
J.~H. Jung, S.-J. Kang, M.~Kim, S.~Kim, and J.-Y. Yu.
\newblock Adjoint crystals and {Y}oung walls for {$U_q(\widehat{{\rm sl}_2})$}.
\newblock {\em European J. Combin.}, 31(3):738--758, 2010.

\bibitem{leclerc1}
B.~Leclerc.
\newblock Dual canonical bases, quantum shuffles and {$q$}-characters.
\newblock {\em Math. Z.}, 246:691--732, 2004.

\bibitem{lusztig}
G.~Lusztig.
\newblock {\em Introduction to quantum groups}, volume 110 of {\em Progress in
  Mathematics}.
\newblock Birkh\"{a}user Boston Inc., Boston, MA, 1993.

\bibitem{PT}
S.~Post and P.~Terwilliger.
\newblock An infinite-dimensional {$\square_q$}-module obtained from the
  {$q$}-shuffle algebra for affine {$\frak{sl}_2$}.
\newblock {\em SIGMA Symmetry Integrability Geom. Methods Appl.}, 16:Paper No.
  037, 35, 2020,
  \href{https://arxiv.org/abs/1806.10007}{\nolinkurl{arXiv:1806.10007}}.

\bibitem{rosso1}
M.~Rosso.
\newblock Groupes quantiques et alg\`ebres de battage quantiques.
\newblock {\em C. R. Acad. Sci. Paris S\'{e}r. I Math.}, 320(2):145--148, 1995.

\bibitem{rosso2}
M.~Rosso.
\newblock Quantum groups and quantum shuffles.
\newblock {\em Invent. Math.}, 133(2):399--416, 1998.

\bibitem{inverse}
C.~Ruan.
\newblock A generating function associated with the alternating elements in the
  positive part of {$U_q(\widehat{\mathfrak{sl}}_2)$}.
\newblock {\em Comm. Algebra}, 51(4):1707--1720, 2023,
  \href{https://arxiv.org/abs/2204.10223}{\nolinkurl{arXiv:2204.10223}}.

\bibitem{Delta_n}
C.~Ruan.
\newblock A uniform approach to the {D}amiani, {B}eck, and alternating {PBW}
  bases for the positive part of {$U_q(\widehat{\mathfrak{sl}}_2)$}.
\newblock Preprint, 2023,
  \href{https://arxiv.org/abs/2305.11152}{\nolinkurl{arXiv:2305.11152}}.

\bibitem{mythesis}
C.~Ruan.
\newblock Some results involving the positive part of the quantized enveloping
  algebra for affine {$\mathfrak{sl}_2$}, May 2024.
\newblock Available at \url{https://www.proquest.com/docview/3051317205}.

\bibitem{ter_alternating}
P.~Terwilliger.
\newblock The alternating {PBW} basis for the positive part of
  {$U_q(\widehat{\mathfrak{sl}}_2)$}.
\newblock {\em J. Math. Phys.}, 60(7):071704, 27, 2019,
  \href{https://arxiv.org/abs/1902.00721}{\nolinkurl{arXiv:1902.00721}}.

\bibitem{ter_catalan}
P.~Terwilliger.
\newblock Using {C}atalan words and a {$q$}-shuffle algebra to describe a {PBW}
  basis for the positive part of {$U_q(\widehat{\mathfrak{sl}}_2)$}.
\newblock {\em J. Algebra}, 525:359--373, 2019,
  \href{https://arxiv.org/abs/1806.11228}{\nolinkurl{arXiv:1806.11228}}.

\bibitem{ter_beck}
P.~Terwilliger.
\newblock Using {C}atalan words and a {$q$}-shuffle algebra to describe the
  {B}eck {PBW} basis for the positive part of {$U_q(\hat{\mathfrak{sl}}_2)$}.
\newblock {\em J. Algebra}, 604:162--184, 2022,
  \href{https://arxiv.org/abs/2108.12708}{\nolinkurl{arXiv:2108.12708}}.

\bibitem{watanabe}
Y.~Watanabe.
\newblock An algebra associated with a subspace lattice over a finite field and
  its relation to the quantum affine algebra {$U_q( \widehat{\mathfrak{sl}}
  _2)$}.
\newblock {\em J. Algebra}, 489:475--505, 2017.

\end{thebibliography}
\end{document}